\documentclass[12pt,a4paper]{article}
\usepackage[a4paper,top=2.5cm,bottom=2.5cm,left=2.5cm,right=2.5cm]{geometry}
\usepackage{amsmath}
 \usepackage{graphicx}
\usepackage{amsthm}
\usepackage{color}
\usepackage{fancyhdr}
\usepackage{pdfpages}
\usepackage{txfonts}
\usepackage{amssymb}
\usepackage{paralist}
\usepackage{extarrows}
\usepackage{setspace}
\usepackage{braket} %脚注缩进的宏包
\usepackage[colorlinks=true,citecolor=blue,linkcolor=blue,anchorcolor=blue,urlcolor=blue]{hyperref}
\allowdisplaybreaks[4]
\numberwithin{equation}{section}
\newtheorem{theorem}{Theorem}[section]
\newtheorem{lemma}{Lemma} [section]

\begin{document}
\title{Variational problems related to self-similar solutions of Hardy-Sobolev heat equation in $\mathbb{R}^N$
 \footnote{This study was supported by the Key Program of the National Natural Science Foundation of China (Grant No. 12231016.)
 and the Program of the National Natural Science Foundation of China (Grant No. 12571117.)}}
\author{Fei Fang \\  \footnotesize  \emph{School of Mathematics and Statistics,%
Beijing Technology and Business University, Beijing 100048, China}
\\Zhong Tan \footnote{Corresponding author.  Email: tanloy2026@126.com} \\  \footnotesize  \emph{School of Mathematical Science, Xiamen University, Xiamen, 361005, China}}
\maketitle
\noindent \textbf{\textbf{Abstract:}}
In this paper, we apply a self-similar transformation to convert the parabolic equation with a Sobolev-Hardy term
  \begin{align*}
u_t-\Delta u= \frac{|u|^{q-2}u}{\left|x\right|^s}  & \text { in } \mathbb{R}^N \times(0, \infty), 
\end{align*}
into the following elliptic equation
\begin{equation*}
-\Delta v-\frac{1}{2} y \cdot \nabla v=\alpha v+ \frac{|v|^{q-2} v}{|y|^s},
\end{equation*}
where  $2 < q \leq 2^*(s)=\frac{2 N-2 s}{N-2}, 0 \leq s < 2, \alpha=\frac{2-s}{2q-4}$.
For this equation, we establish the
weighted Hardy inequality and Sobolev inequality. Furthermore, by virtue of the variational methods,
we obtain infinitely many solutions in the subcritical case,
 and prove the existence of solutions in the critical case.
We also apply the Pohozaev identity to establish the nonexistence of solutions under certain conditions.

\noindent  \textbf{Keywords:} Hardy terms, Sobolev inequality, self-similar transformation, variational method.

\noindent \textbf{Mathematics Subject Classification} 35K05, 35K67, 35J15, 35J20, 35J60

\section{Introduction}
This paper  is concerned with  the following classical semilinear parabolic equation associated with  Sobolev-Hardy term in $\mathbb{R}^N$ :
\begin{equation}\label{e01}
u_t-\Delta u= \frac{|u|^{q-2}u}{\left|x\right|^s} ,  \text { in } \mathbb{R}^N \times(0, \infty),
\end{equation}
where  $2 < q \leq 2^*(s) \equiv \frac{2 N-2 s}{N-2}, 0 \leq s < 2$.
Semilinear parabolic partial differential equations with singular potentials constitute a core research branch of
modern nonlinear analysis and evolutionary equation theory. Compared with classical parabolic equations with smooth
 coefficients, such equations feature spatially singular terms, which can effectively characterize complex diffusion,
  concentration and singular evolution phenomena in physics, mechanics and engineering practice, including anomalous
  heat conduction, phase transition dynamics, material singularity response and population evolution with spatial
  singularity constraints (see \cite{dr3,dr4}.  In \cite{dr5}, the author established the necessary and sufficient
  conditions for initial data lying below or at the ground state, which lead to a complete dichotomy of the dynamical
   behavior of solutions: the solution either exists globally in time with its energy decaying to zero as time evolves,
   or undergoes blow-up in finite or infinite time.
In \cite{dr9}, for spatial dimension \(N\geq5\), by virtue of the Fourier splitting method,
the author derived the precise decay rate of dissipative solutions and demonstrated that the critical norm of such
dissipative solutions vanish as time tends to infinity.
  For further research progress on this equation, we refer the
   readers to  \cite{dr8,dr7,dr9,dr6} and the references therein.

The self-similar transformation is defined as follow:
$$
u(x, t)=t^{-\alpha} v(y), \quad y=\frac{x}{\sqrt{t}},
$$
where,
$$
t>0, \alpha=\frac{2-s}{2q-4}.
$$
Through this transformation and the detailed calculation, problem \eqref{e01} can be transformed into the following problem
\begin{equation}\label{e02}
-\Delta v-\frac{1}{2} y \cdot \nabla v=\alpha v+ \frac{|v|^{q-2} v}{|y|^s}.
\end{equation}
Set
$
K(y):=e^{|y|^2 / 4}.
$
Then we let $$L v:=-\Delta v-\frac{1}{2} y \cdot \nabla v=-\frac{1}{K} \nabla \cdot(K \nabla v).$$
Hence,  problem \eqref{e02} is equivalent to
\begin{equation*}
Lv=\alpha v+ \frac{|v|^{q-2} v}{|y|^s}.
\end{equation*}
When \(s=0\), for problem \eqref{e01}, Tan \cite{tan} adopted the Moser iteration technique and potential well method to establish the existence of classical global solutions for initial data with low energy. Meanwhile, he further investigated the asymptotic behaviors of global solutions arising from high-energy initial data. Based on the existing theories concerning radial solutions, this work precisely describes the asymptotic profiles of unbounded global solutions defined in bounded domains of dimension \(N\ge 3\). Escobedo and  Kavian   in \cite{cr2} conducted a systematic investigation into problem \eqref{e01} and problem \eqref{e02} for $s=0$.
By virtue of the self-similar transformation, problem \eqref{e01} can be reduced to problem \eqref{e02}.
Employing variational methods, including critical point theory and the Mountain Pass Lemma,
they established the existence, multiplicity, symmetry and asymptotic properties of solutions to
the elliptic equation, and further characterized the complete structure of self-similar solutions
 for the heat equation. Thereafter, for $s=0$, problem \eqref{e02} as well as its more general nonlinear
  formulations has been extensively investigated in the literature; see, for instance,
  \cite{dr10,dr17,dr11,dr12,dr13,dr14,dr15,dr16}.

We  denote by $X$ the Hilbert space obtained as the completion of $C_c^{\infty}\left(\mathbb{R}^N\right)$ with respect to the norm
$$\|\nabla v\|_{K,2}:=\left(\int_{\mathbb{R}^N}  |\nabla v|^2 K(y) dy\right)^{\frac{1}{2}}$$
which is induced by the inner product
$$
(v_1, v_2)_K:=\int_{\mathbb{R}^N} (\nabla v_1 \cdot \nabla v_2)K(y) dy.
$$
For each $q \in\left[2,2^{\ast}(s)\right]$,  we  define $L_K^q\left(\mathbb{R}^N\right)$  and  $L_K^q\left(\mathbb{R}^N, \frac{1}{|y|^s}\right)$ as follows:
$$
L_K^q\left(\mathbb{R}^N\right):=\left\{v \text { measurable in } \mathbb{R}^N:\|v\|_{K,q}:=\left(\int_{\mathbb{R}^N}|v|^q  K(y)dy\right)^{1 / q}<\infty\right\},
$$
$$
L_K^q\left(\mathbb{R}^N, \frac{1}{|y|^s}\right):=\left\{v \text { measurable in } \mathbb{R}^N:\left\|v\right\|_{K,q,s}:
=\left(\int_{\mathbb{R}^N} \frac{1}{|y|^s}|v|^q K(y)dy\right)^{1 / q}<\infty\right\} .
$$

The domain $D(L)$ of this operator consists of all $v \in L_K^2\left(\mathbb{R}^N\right)$ such that $L v \in L_K^2\left(\mathbb{R}^N\right)$,
 and one has $D(L)=X$; see Lemma 2.1 in \cite{cr5}. Moreover, $L$ is positive and self-adjoint with a compact inverse.
 In particular, the normalized eigenfunctions of $L$ constitute a complete orthonormal basis in $L_K^2\left(\mathbb{R}^N\right)$.
 The first eigenvalue of $L$ satisfies $\lambda_1=N / 2$, which yields the following Poincar\'{e} inequality
 \begin{equation}\label{po}
  \lambda_1\|v\|_{K, 2}^2 \leqslant\|\nabla v\|_{K, 2}^2, \quad v \in X,
\end{equation}
as stated in Proposition 2.3 of \cite{cr2}.
A function $v\in X$ is a weak solution of
(\ref{e02}), if for every $\phi \in X$ satisfies

$$
 \int_{ \mathbb{R}^N } \nabla v \nabla \phi k(y)dy=\alpha\int_{ \mathbb{R}^N } v\phi K(y)dy+\int_{ \mathbb{R}^N }\frac{|v|^{q-2} v}{|y|^s}\phi K(y)dy.
$$
Weak solutions of (\ref{e02}) are exactly the critical points of the functional $E_K(v) : X \rightarrow R $ defined as
\begin{equation}\label{ef}
 \begin{aligned}
E_{K}(v)&:=\frac{1}{2} \int_{ \mathbb{R}^N }|\nabla v|^2
 K(y)d y- \frac{\alpha}{2}\int_{ \mathbb{R}^N }|v|^2 K(y)d y-\frac{1}{q} \int_{\mathbb{R}^N }\frac{1}{|y|^s}|v|^{q} K(y)dy\\
&=\frac{1}{2} \int_{ \mathbb{R}^N }\left(|\nabla v|^2
-\alpha|v|^2\right) K(y)d y-\frac{1}{q} \int_{\mathbb{R}^N }\frac{1}{|y|^s}|v|^{q} K(y)dy\\
&:=\frac{1}{2}A(v)-\frac{1}{q}B(v).
\end{aligned}
\end{equation}
It is easy to prove that $E_K(v)$ is Fr\'{e}chet differentiable in $X$ and
$$\langle E_K(v),\phi\rangle
= \int_{ \mathbb{R}^N } \nabla v \nabla \phi k(y)dy-\alpha\int_{ \mathbb{R}^N } v\phi K(y)dy-\int_{ \mathbb{R}^N }\frac{|v|^{q-2} v}{|y|^s}\phi K(y)dy.
$$

This paper is organized as follows. In Section 2, we establish the weighted Hardy inequality
and the weighted Sobolev inequality. In Section 3, we investigate the Hardy subcritical singular
case and obtain the existence of infinitely many solutions for this case. In Section 4,
we discuss the Hardy critical singular case and derive the existence and nonexistence results of solutions.
In Section 5, we provide an alternative proof for the existence result established in Section 4.

\section{Main lemmas}
In this section, we mainly prove the K-Hardy Inequality and the K-Sobolev-Hardy Inequality,
 and list several useful lemmas.%We first present the following Hardy inequality with the weighted function $K$.

\begin{lemma}\label{l00}
(See \cite{cr2}) The embedding $X \hookrightarrow L_K^q\left(\mathbb{R}^N\right)$ is continuous for all $q \in\left[2,2^{\ast}(0)\right]$ and it is compact for all $q \in\left[2,2^{\ast}(0)\right)$.
\end{lemma}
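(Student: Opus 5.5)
The plan is to reduce everything to the standard, unweighted Sobolev embedding on $\mathbb{R}^N$ through the substitution $w = K^{1/2} v$, i.e.\ $w(y)=e^{|y|^2/8}v(y)$. We work with $v\in C_c^\infty(\mathbb{R}^N)$ and pass to $X$ by density at the end.

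\textbf{Step 1: the identity for $\int |\nabla w|^2$.} With $\nabla w = K^{1/2}(\nabla v + \tfrac{y}{4}v)$, integrating the cross term $\tfrac12 \int y\cdot\nabla(v^2)K$ by parts (using $\nabla K=\tfrac{y}{2}K$) gives
\begin{equation*}
\int_{\mathbb{R}^N}|\nabla w|^2\,dy=\int_{\mathbb{R}^N}|\nabla v|^2K\,dy-\frac N4\int_{\mathbb{R}^N}|v|^2K\,dy-\frac1{16}\int_{\mathbb{R}^N}|y|^2|v|^2K\,dy .
\end{equation*}
Combining this with the Poincar\'e inequality \eqref{po}, where $\lambda_1=N/2$, yields $\int|\nabla w|^2\le \|\nabla v\|_{K,2}^2$. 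The same computation, now keeping the negative $|y|^2$ term on the other side, gives the weighted moment bound
\begin{equation*}
\int_{\mathbb{R}^N}|y|^2|v|^2K\,dy\le C\|\nabla v\|_{K,2}^2 .
\end{equation*}
I expect this moment bound to be the delicate step, because the signs must be tracked carefully. The fallback is to use the identity in the reverse direction, $\int|\nabla v|^2K=\int|\nabla w|^2+\tfrac N4\int v^2K+\tfrac1{16}\int |y|^2v^2K$. Since every term on the right is nonnegative, this directly gives $\int |y|^2 v^2 K\le 16\|\nabla v\|_{K,2}^2$.

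\textbf{Step 2: continuity.} For $2<q\le 2^*(0)$ we have $|v|^qK=|w|^qK^{1-q/2}\le |w|^q$, since $K\ge1$ and $q\ge 2$. The classical Sobolev inequality then gives $\|w\|_{L^{2^*}}\le S\|\nabla w\|_{L^2}$. We also have $\|w\|_{L^2}^2=\|v\|_{K,2}^2\le \lambda_1^{-1}\|\nabla v\|_{K,2}^2$. Interpolating between $L^2$ and $L^{2^*}$ bounds $\|v\|_{K,q}\le C\|\nabla v\|_{K,2}$ for every $q\in[2,2^*(0)]$. The case $q=2$ is just \eqref{po}. The estimate then extends to all of $X$ by density.

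\textbf{Step 3: compactness for $q<2^*(0)$.} Take a bounded sequence $v_n\rightharpoonup v$ in $X$. The corresponding $w_n$ are bounded in $H^1(\mathbb{R}^N)$, so Rellich's theorem gives $w_n\to w$ in $L^2(B_R)$ for every $R$. For the tails we use
\begin{equation*}
\int_{|y|>R}|v_n-v|^2K\,dy\le R^{-2}\int_{\mathbb{R}^N}|y|^2|v_n-v|^2K\,dy\le CR^{-2},
\end{equation*}
which holds uniformly in $n$ by the moment bound of Step 1. Together these give $v_n\to v$ in $L^2_K$. For $2<q<2^*(0)$, we interpolate in $L^q_K$ between $L^2_K$ (strong convergence) and $L^{2^*}_K$ (uniform bound from Step 2). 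The compactness for the whole range $[2,2^*(0))$ follows.
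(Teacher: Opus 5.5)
Your proof is correct and is essentially the standard argument of \cite{cr2} to which the paper defers; the paper's own proof of Lemma~\ref{l02}(2) reproduces the compactness part with an extra weight $|x|^{-s/2}$. The shared steps are conjugation by $K^{1/2}$ to get $\int|\nabla v|^2K=\int|\nabla w|^2+\int(\frac N4+\frac{|y|^2}{16})w^2$, Sobolev for $w$ together with $K^{1-q/2}\le 1$, Rellich on balls with tail control from the $\frac{|y|^2}{16}$ term, and interpolation against the $L^{2^*}_K$ bound. As a minor remark, the appeal to the Poincar\'e inequality in Step 1 is unnecessary, since the identity itself already gives both $\int|\nabla w|^2\le\|\nabla v\|_{K,2}^2$ and $\frac N4\|v\|_{K,2}^2\le\|\nabla v\|_{K,2}^2$.
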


\begin{lemma}(See \cite[Lemma 1.21]{cr2})\label{l000}
 (Hardy inequality.) Let $a>-N, N \geqslant 1$, then for any $R \geqslant 0$ and any $u \in C_c^1\left(\mathbb{R}^{N}\right)$
$$
\int_{|x| \geqslant R}|u(x)|^2|x|^a d x \leqslant \frac{4}{(a+N)^2} \int_{x \geqslant R}|x \cdot \nabla u(x)|^2|x|^a d x,
$$
when $R=0$, the inequality is strict unless $u=0$, and the constant
$
\frac{4}{(a+N)^2}
$
is the best possible.
\end{lemma}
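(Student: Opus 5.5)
We prove the Hardy inequality of Lemma~\ref{l000} by a one-dimensional integration by parts in the radial direction, followed by Cauchy--Schwarz. We first treat the case $R>0$, then let $R\to0$, and finally handle strictness and optimality separately.

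\textbf{The basic identity.} Fix $u\in C_c^1(\mathbb{R}^N)$ and $R>0$, and set $\Omega_R=\{|x|>R\}$. Since $\nabla\cdot(x|x|^a)=(N+a)|x|^a$, the divergence theorem on $\Omega_R$ gives
\begin{equation*}
(N+a)\int_{\Omega_R}|u|^2|x|^a\,dx=-\int_{\Omega_R}2u\,(x\cdot\nabla u)|x|^a\,dx-\int_{|x|=R}|u|^2R^{a+1}\,d\sigma .
\end{equation*}
The outward normal of $\Omega_R$ on the sphere $|x|=R$ is $-x/R$, so the boundary term is nonpositive. There is no contribution at infinity because $u$ has compact support. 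Dropping the boundary term and using Cauchy--Schwarz yields
\begin{equation*}
(N+a)\int_{\Omega_R}|u|^2|x|^a \le 2\Big(\int_{\Omega_R}|u|^2|x|^a\Big)^{1/2}\Big(\int_{\Omega_R}|x\cdot\nabla u|^2|x|^a\Big)^{1/2}.
\end{equation*}
Here we use $a>-N$ so that $N+a>0$. Dividing by the first square root and squaring gives the claim with constant $4/(a+N)^2$.

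\textbf{The case $R=0$.} The weight $|x|^a$ is integrable near the origin because $a>-N$. Hence both integrals converge, and we may let $R\to0$ by monotone convergence. The boundary term is $O(R^{a+N})\to0$, so the identity above holds on all of $\mathbb{R}^N$ with no boundary term.

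\textbf{Strictness for $R=0$.} Equality in Cauchy--Schwarz forces $x\cdot\nabla u=-\tfrac{N+a}{2}u$. On each ray this is the ODE $r\,\partial_r u=-\tfrac{N+a}{2}u$, so $u=c(\theta)r^{-(N+a)/2}$. Compact support forces $c\equiv0$, hence $u=0$.

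\textbf{Optimality of the constant.} We test with the near-extremizers $u_\varepsilon(x)=\min\{1,|x|^{-(N+a)/2-\varepsilon}\}$, cut off smoothly at a large radius and mollified. A direct radial computation shows that the ratio of the two sides tends to $4/(N+a)^2$ as $\varepsilon\to0$. The point is that the tail integral $\int_1^\infty r^{-1-2\varepsilon}\,dr=1/(2\varepsilon)$ dominates both sides. Controlling the error terms from the cutoff and the mollification is the step most prone to bookkeeping mistakes. Since the statement is quoted from \cite{cr2}, this step can alternatively be cited from there.
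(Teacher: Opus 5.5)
Your proof is correct. The paper gives no proof of this lemma at all; it is simply quoted from \cite{cr2}. So your argument is a self-contained replacement for the citation, not an alternative to an argument in the paper. Its steps are the divergence theorem for the field $x|x|^a$ on $\{|x|>R\}$, dropping the nonpositive boundary term $-R^{a+1}\int_{|x|=R}|u|^2\,d\sigma$, Cauchy--Schwarz, the limit $R\to0$ using $a+N>0$, and the Euler equation $r\partial_r u=-\frac{N+a}{2}u$ in the equality case.

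Your equality argument actually gives more than the statement claims. Cauchy--Schwarz yields $x\cdot\nabla u=-\lambda u$ with $\lambda\ge0$, and the identity then forces $\lambda=\frac{N+a}{2}$. After that, compact support is not needed to conclude. Indeed, $u=c(\theta)r^{-(N+a)/2}$ has $\int|u|^2|x|^a\,dx=\int_{S^{N-1}}|c|^2\,d\sigma\int_0^\infty r^{-1}\,dr=\infty$ unless $c\equiv0$. Hence the inequality stays strict on the closure of $C_c^1$ in the norm $\|u\|_{L^2(|x|^a)}+\|x\cdot\nabla u\|_{L^2(|x|^a)}$. This matters because strictness does not pass to limits by density alone. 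It is exactly what the proof of Theorem~\ref{t41} uses, with $a=0$, $R=0$ and $v\in X$ rather than $v\in C_c^1$.

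For optimality, your ratio $\bigl(\frac{1}{2\varepsilon}+\frac{1}{N+a}\bigr)\big/\frac{\beta^2}{2\varepsilon}\to\frac{4}{(N+a)^2}$, with $\beta=\frac{N+a}{2}+\varepsilon$, is right. The bookkeeping you worry about can be avoided. The smooth function $u=(1+|x|^2)^{-\beta/2}$ lies in $L^2(|x|^a\,dx)$ and satisfies $x\cdot\nabla u=-\beta\frac{|x|^2}{1+|x|^2}u$. So $|x\cdot\nabla u|\le\beta|u|$ pointwise, and the ratio is at least $\beta^{-2}$ immediately. For the cutoffs $u\,\chi(x/M)$, both sides converge to those of $u$. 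All errors live in $\{|x|\ge M\}$, where $\int_{|x|\ge M}|u|^2|x|^a\,dx\le|S^{N-1}|\int_M^\infty r^{-1-2\varepsilon}\,dr\to0$. No mollification is needed.
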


\begin{lemma}\label{l01}
 Assume  $u \in H_{K}^{1}\left(\mathbb{R}^N\right)$. Then we have
\begin{description}
  \item[(1)] $u /|x| \in L^2_{K}\left(\mathbb{R}^N\right)$.
  \item[(2)] (K-Hardy inequality)
\begin{equation}\label{wh}
  \int_{\mathbb{R}^N} \frac{|u|^2}{|x|^2} K(x)d x \leqslant C_{N} \int_{\mathbb{R}^N}|\nabla u|^2 K(x)d x
\end{equation}
with $C_{N}=\left(\frac{2}{N-2}\right)^2 $.
  \item[(3)] The constant $C_{N}$ is optimal.

\end{description}
\end{lemma}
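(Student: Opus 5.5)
The plan is a direct integration-by-parts (``vector field'') argument for smooth compactly supported functions, followed by a density argument for (1)--(2), and a concentration/scaling argument for (3). Throughout, $N\ge 3$ is needed, since $C_N=\left(\frac{2}{N-2}\right)^2$ only makes sense in that case.

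\emph{Step 1: inequality for $u\in C_c^\infty(\mathbb{R}^N)$.}
Use the vector field $x/|x|^2$, which satisfies $\nabla\cdot\left(\frac{x}{|x|^2}\right)=\frac{N-2}{|x|^2}$ away from the origin. Because $N\ge3$, $|x|^{-2}$ and $|x|^{-1}$ are locally integrable, so the divergence theorem applies; I would justify this by excising a small ball $B_\varepsilon$, where the boundary term is $O(\varepsilon^{N-2})\to0$. This gives
\begin{equation*}
(N-2)\int_{\mathbb{R}^N}\frac{u^2}{|x|^2}K\,dx=-\int_{\mathbb{R}^N}\frac{x}{|x|^2}\cdot\nabla(u^2K)\,dx .
\end{equation*}
Since $\nabla K=\frac{x}{2}K$, the right-hand side equals
\begin{equation*}
-2\int_{\mathbb{R}^N}\frac{u\,x\cdot\nabla u}{|x|^2}K\,dx-\frac12\int_{\mathbb{R}^N}u^2K\,dx .
\end{equation*}
The last term is nonpositive, so it can be dropped; this is the key point, since the exponential weight only helps. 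Applying Cauchy--Schwarz to the first term yields
\begin{equation*}
(N-2)\int_{\mathbb{R}^N}\frac{u^2}{|x|^2}K\,dx\le 2\left(\int_{\mathbb{R}^N}\frac{u^2}{|x|^2}K\,dx\right)^{1/2}\left(\int_{\mathbb{R}^N}|\nabla u|^2K\,dx\right)^{1/2}.
\end{equation*}
Dividing and squaring gives \eqref{wh} with $C_N=\left(\frac{2}{N-2}\right)^2$.

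\emph{Step 2: extension to $H^1_K$, giving (1) and (2).}
For general $u$ in the space (the completion $X$ of $C_c^\infty$), take $u_n\in C_c^\infty$ with $u_n\to u$ in the $K$-gradient norm. Passing to a subsequence, we may also assume $u_n\to u$ a.e.; this uses the Poincar\'e inequality \eqref{po}, which gives convergence in $L^2_K$ as well. Fatou's lemma applied to $u_n^2|x|^{-2}K$ then yields \eqref{wh} for $u$. In particular $u/|x|\in L^2_K$, which is (1).

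\emph{Step 3: optimality, part (3).}
Fix $\delta>0$. By Lemma \ref{l000} with $a=-2$ and $R=0$, together with $|x\cdot\nabla u|\le|x||\nabla u|$, the classical unweighted Hardy constant is $\left(\frac{2}{N-2}\right)^2$, and it is sharp. Hence there is $\varphi\in C_c^\infty$ with
\begin{equation*}
\int\varphi^2|x|^{-2}\ge (C_N-\delta)\int|\nabla\varphi|^2 .
\end{equation*}
Both sides scale identically under $\varphi_\varepsilon(x)=\varphi(x/\varepsilon)$, and $\varphi_\varepsilon$ is supported in $B_{\varepsilon R_0}$, where $1\le K\le e^{\varepsilon^2R_0^2/4}$. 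Therefore the weighted ratio for $\varphi_\varepsilon$ is at least $(C_N-\delta)e^{-\varepsilon^2R_0^2/4}$. Letting $\varepsilon\to0$ and then $\delta\to0$ shows that $C_N$ cannot be lowered.

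The main technical care lies in Step 1 near the origin, namely checking that the boundary term vanishes, and in the density step. The sharpness step reduces cleanly to the classical constant from Lemma \ref{l000}.
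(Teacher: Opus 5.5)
Your proof is correct. For (1)--(2) it is the paper's argument in another form. The paper writes $|u(x)|^2K(x)=-\int_1^\infty\frac{d}{d\lambda}\bigl(|u(\lambda x)|^2K(\lambda x)\bigr)\,d\lambda$, integrates against $|x|^{-2}$ and rescales $y=\lambda x$, which produces the factor $\int_1^\infty\lambda^{1-N}\,d\lambda=\frac{1}{N-2}$. This is the ray-by-ray version of your divergence identity for $x/|x|^2$. Both arguments discard the same nonpositive term coming from $\nabla K=\frac{x}{2}K$ and conclude by Cauchy--Schwarz, and your density step spells out the paper's ``density argument''.

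Part (3) is genuinely different. The paper uses one explicit radial function $V$, equal to $1$ on the unit ball and to $r^{-\gamma}e^{-r^2/4}$ outside with $\gamma=\frac{N-2+2\varepsilon}{2}$, together with a pointwise comparison. You instead concentrate a near-extremal of the flat Hardy inequality at the origin, where $K\to 1$. Your route is the one that actually works. From $|\nabla V|^2=V^2\bigl(\frac{\gamma^2}{|x|^2}+\gamma+\frac{|x|^2}{4}\bigr)$ one gets $\frac{V^2}{|x|^2}\le\frac{|\nabla V|^2}{\gamma^2}$, the opposite of the inequality the paper uses. Also, $V$ jumps from $1$ to $e^{-1/4}$ at $|x|=1$, so it is not in $X$. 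Moreover, $\int|\nabla u|^2K\ge\frac{1}{16}\int|x|^2u^2K$, so functions supported far from the origin have a small Hardy quotient. Sharpness can therefore only be detected near $0$, which is exactly what your scaling exploits.

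One point to tighten in your Step 3. Lemma \ref{l000} with $a=-2$ is sharp for $\int|x\cdot\nabla u|^2|x|^{-2}=\int|\partial_r u|^2$. The bound $|x\cdot\nabla u|\le|x|\,|\nabla u|$ transfers only the inequality, not the sharpness, to $\int|\nabla u|^2$. You need radial near-extremals, for which $|\nabla u|=|\partial_r u|$. Either take smooth truncations of $|x|^{-(N-2)/2}$, or replace a near-extremal $u$ by $\tilde u(r)=\bigl(\frac{1}{|\mathbb{S}^{N-1}|}\int_{\mathbb{S}^{N-1}}u(r\theta)^2\,d\theta\bigr)^{1/2}$. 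This leaves $\int u^2|x|^{-2}$ unchanged and, by Cauchy--Schwarz on the sphere, does not increase $\int|\partial_r u|^2$. This is the classical sharpness of the Hardy constant, so the fix is minor.
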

\begin{proof}
  By a density argument, it suffices to consider only smooth functions $u \in C_0^{\infty}\left(\mathbb{R}^N\right)$. Then we have
  $$
\begin{aligned}
  |u(x)|^2K(x)&=-\int_1^{\infty} \frac{d}{d \lambda}\left(|u(\lambda x)|^2  K(\lambda x)\right)d \lambda\\
 & =-2 \int_1^{\infty} u(\lambda x)\langle x, \nabla u(\lambda x)\rangle  K(\lambda x) d \lambda-\int_1^{\infty} \left(\frac{\lambda |x|^2}{2}|u(\lambda x)|^2  K(\lambda x)\right)d \lambda.
 \end{aligned}
$$
 By using H\"{o}lder inequality,  we obtain

  $$
\begin{aligned}
\int_{\mathbb{R}^N} \frac{|u(x)|^2}{|x|^2} K(x)d x & =-2 \int_1^{\infty} \int_{\mathbb{R}^N}
\frac{u(\lambda x)}{|x|}\left\langle\frac{x}{|x|}, \nabla u(\lambda x)\right\rangle K(\lambda x)d x d \lambda \\
&\ \ \ \ -\int_1^{\infty}\int_{\mathbb{R}^N} \left(\frac{\lambda }{2}|u(\lambda x)|^2  K(\lambda x)\right)d \lambda.\\
& \leq -2 \int_1^{\infty} \frac{d \lambda}{\lambda^{N-1}} \int_{\mathbb{R}^N} \frac{u(y)}{|y|} \frac{\partial u(y)}{\partial r} K\left(y\right)d y \\
& =-\frac{2}{N-2} \int_{\mathbb{R}^N} \frac{u(y)}{|y|} \frac{\partial u(y)}{\partial r} K\left(y\right) d y \\
& \leqslant \frac{2}{N-2}\left(\int_{\mathbb{R}^N} \frac{|u(y)|^2}{|y|^2}K\left(y\right) d y\right)^{1 / 2}\left(\int_{\mathbb{R}^N}\left|\frac{\partial u(y)}{\partial r}\right|^2 K\left(y\right)d y\right)^{1 / 2}.
\end{aligned}
$$
And then we have
$$
\int_{\mathbb{R}^N} \frac{|u(x)|^2}{|x|^2} K(x) d x\leq
\left(\frac{2}{N-2}\right)^2 \int_{\mathbb{R}^N}|\nabla u(x)|^2  K(x) d x.
$$

Next, we prove that $C_N$ is the optimal constant.
Given $\varepsilon>0$, we take the following radial function
$$
V(r)= \begin{cases}1 & \text { if } r \in[0,1], \\  r^{-\gamma} e^{-\frac{|r|^2}{4}} & \text { if } r>1,\end{cases}
$$
where   $\gamma=\frac{N-2+2 \varepsilon}{2}$. Hence, we have %$B_{N,  \varepsilon}=2 /(N-2+2 \varepsilon)$,
$$
V^2(x)= \begin{cases}1 & \text { if } r \in[0,1], \\ r^{-2\gamma} e^{-\frac{|r|^2}{2}} & \text { if } r>1.\end{cases}
$$
For $r>1$, we obtain
\begin{equation*}%\label{hav}
 \begin{aligned}
V^{\prime}(r)&=-\gamma r^{-\gamma-1} e^{-\frac{r^2}{4}}+r^{-\gamma} e^{-\frac{r^2}{4}} \cdot\left(-\frac{r}{2}\right)=-e^{-\frac{r^2}{4}} r^{-\gamma}\left(\frac{\gamma}{r}+\frac{r}{2}\right) =-V(r)\left(\frac{\gamma}{r}+\frac{r}{2}\right),\\
|\nabla V(x)|^2&=\left(V^{\prime}(r)\right)^2=V^2(x)\left(\frac{\gamma^2}{|x|^2}+\gamma+\frac{|x|^2}{4}\right),\\
\frac{V^2(x)}{|x|^2}&=\frac{|\nabla V(x)|^2}{|x|^2\left(\frac{\gamma^2}{|x|^2}+\gamma+\frac{|x|^2}{4}\right)}
=\frac{|\nabla V(x)|^2}{\left(\gamma^2+\gamma|x|^2+\frac{|x|^4}{4}\right)}\geq \frac{|\nabla V(x)|^2}{\gamma^2}.
\end{aligned}
\end{equation*}
Thus, from the above relation, we get
\begin{equation}\label{hav1}
 \begin{aligned}
\int_{\mathbb{R}^N} \frac{V^2(x)}{|x|^2}K(x) d x & =\int_B \frac{V^2(x)}{|x|^2} K(x)d x+\int_{\mathbb{R}^N\backslash B} \frac{V^2(x)}{|x|^2} K(x)d x \\
& \geq\omega_N \int_0^1 r^{N-3} K(r)d r+ \frac{1}{\gamma^2}\int_{\mathbb{R}^N}|\nabla V(x)|^2K(x) d x.\\
\end{aligned}
\end{equation}
where $\omega_N$ is the measure of the $(N-1)$-dimensional unit sphere. %Letting $\varepsilon \rightarrow 0$, we may therefore conclude this result.

The inequality \eqref{hav1} implies that as long as the constant $1/\gamma^2$ is less than $C_N$,
we can always construct a function $V(x)$ such that the inequality \eqref{wh} fails to hold. Consequently, $C_N$ is shown to be optimal.

\end{proof}

\begin{lemma}\label{l02}
 (K-Sobolev-Hardy Inequality). Assume that  $2 \leq q \leq 2^*(s):=\frac{2N-2s}{N-2}$. Then:
 \begin{description}
   \item[(1)]  There exists a constant $C>0$ such that for any $u \in X$,

$$
\left(\int_{\mathbb{R}^N} \frac{|u|^q}{|x|^s} K(x)d x\right)^2 \leq C\left(\int_{\mathbb{R}^N}|\nabla u|^2K(x)d x\right)^q.
$$
   \item[(2)] The embedding $X \hookrightarrow L_K^q\left(\mathbb{R}^N, \frac{1}{|x|^s}\right)$ is continuous for all $q \in\left[2,2^{\ast}(s)\right]$ and it is compact for all $q \in\left[2,2^{\ast}(s)\right)$.
 \end{description}
\end{lemma}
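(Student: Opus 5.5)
The plan is to interpolate between the K-Hardy inequality of Lemma \ref{l01} (the endpoint $s=2$, $q=2$) and the weighted Sobolev embedding of Lemma \ref{l00} (the endpoint $s=0$, $q=2^*(0)$). H\"older's inequality should do this directly. For $0<s<2$ I write
\[
\frac{|u|^q}{|x|^s}K=\Big(\frac{|u|^2}{|x|^2}K\Big)^{s/2}\Big(|u|^{r}K\Big)^{1-s/2},\qquad r=\frac{2(q-s)}{2-s}.
\]
With the conjugate exponents $2/s$ and $2/(2-s)$ this gives
\[
\int_{\mathbb{R}^N}\frac{|u|^q}{|x|^s}K\,dx\le\Big(\int\frac{|u|^2}{|x|^2}K\Big)^{s/2}\Big(\int|u|^rK\Big)^{(2-s)/2}.
\]
Note that $q\le 2^*(s)$ is exactly equivalent to $r\le 2^*(0)=\frac{2N}{N-2}$. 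Also $q\ge2$ gives $r\ge2$. Hence Lemma \ref{l00} applies: $\|u\|_{K,r}\le C\|\nabla u\|_{K,2}$. Lemma \ref{l01} bounds the first factor by $(C_N\|\nabla u\|_{K,2}^2)^{s/2}$. Multiplying, the total power of $\|\nabla u\|_{K,2}$ is $s+r(2-s)/2=s+q-s=q$. Squaring gives part (1). The case $s=0$ is just Lemma \ref{l00}, so nothing new is needed there. This same estimate also gives the continuity in part (2).

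For compactness in part (2), I take $q<2^*(s)$ and a bounded sequence $u_n\rightharpoonup u$ in $X$, and set $w_n=u_n-u$. I want to show $\int |w_n|^q|x|^{-s}K\to0$. Applying the same H\"older splitting to $w_n$, the Hardy factor stays bounded. Since $q<2^*(s)$ forces $r<2^*(0)$, the factor $\|w_n\|_{K,r}$ tends to $0$ by the compact part of Lemma \ref{l00}. One caveat is the case $s>0$ with $q=2$. Then $r=2$, the exponents are still fine, and $L^2_K$ compactness again comes from Lemma \ref{l00}.

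The main obstacle is that the Hardy factor is only bounded, not small. The argument therefore hinges on keeping the exponent $(2-s)/2$ on the $L^r_K$ factor strictly positive, which holds since $s<2$. If one prefers to avoid Hardy altogether, an alternative is to split $\mathbb{R}^N$ into $B_\delta$ and its complement. On $B_\delta$, H\"older with a slightly larger exponent and the integrability of $|x|^{-s\theta}$ near $0$ for $s\theta<N$ make the contribution small uniformly in $n$. On the complement, $|x|^{-s}\le\delta^{-s}$, and the compact embedding finishes the argument. I would use the H\"older interpolation first, since it is cleaner.

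Since $r$ depends only on $q$ and $s$, all constants depend only on $N$, $q$, $s$. No density issue arises beyond the one already handled in Lemma \ref{l01}: I prove the inequalities for $C_c^\infty$ functions and extend them to $X$ by Fatou's lemma.
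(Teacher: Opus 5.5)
Your part (1) is the paper's argument: the same H\"older splitting with exponents $2/s$ and $2/(2-s)$, between the K-Hardy inequality of Lemma \ref{l01} and Lemma \ref{l00}. You make the general-$q$ case explicit through $r=2(q-s)/(2-s)$, whereas the paper writes out $q=2^*(s)$ and says the rest is similar.

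For the compactness in part (2) your route is genuinely different, and it is correct. The paper first proves compactness only for $q=2$, splitting $\mathbb{R}^N$ into $\{|x|\le R\}$ and $\{|x|>R\}$. On the exterior it substitutes $v=|x|^{-s/2}e^{|x|^2/8}u$ and integrates by parts, obtaining a lower bound with a potential that grows at infinity, so the tails are uniformly small. On the ball it invokes a local compactness result from the literature. It then reaches $2<q<2^*(s)$ by H\"older interpolation between $q=2$ and the critical norm, which part (1) keeps bounded.

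You instead reuse the part (1) splitting for $w_n=u_n-u$. The Hardy factor is only bounded, but $q<2^*(s)$ forces $r<2^*(0)$. Hence $\|w_n\|_{K,r}^{r(2-s)/2}\to 0$ by the compact half of Lemma \ref{l00}, and this exponent is positive because $s<2$. Your argument reduces the whole lemma to Lemmas \ref{l00} and \ref{l01}, handles all $q\in[2,2^*(s))$ at once, and needs neither the weighted identity nor the external local result. The paper's route, in exchange, shows directly how the growth of the weight at infinity produces compactness.

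One correction to your fallback via $B_\delta$. On the ball, the H\"older exponent $p$ cannot be taken merely slightly above $q$ when $s>0$. With $q/p+1/\theta=1$, integrability of $|x|^{-s\theta}$ near $0$ requires $s\theta<N$, which is equivalent to $p>qN/(N-s)$. Taking $p=2^*(0)$ works. The admissible range $(qN/(N-s),2^*(0)]$ is nonempty exactly when $q<2^*(s)$.
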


\begin{proof}
(1) For $s=0$, this result is given by Lemma \ref{l00}. For $s=2$, This is a consequence of Lemma \ref{l01}.
Therefore, we consider the case of $0<s<2$. By Lemma \ref{l00}, Lemma \ref{l01} and H\"{o}lder inequality, we have
\begin{align*}
&\int_{\mathbf{R}^n} \frac{|u|^{2^*(s)}}{|x|^s}  K(x) dx=\int_{\mathbb{R}^n} \frac{|u|^s}{|x|^s} (K(x))^{\frac{s}{2}}  \cdot|u|^{2^*(s)-s} (K(x))^{^{\frac{2-s}{2}}}dx\\
& \leq\left(\int_{\mathbb{R}^n}  \frac{|u|^2}{|x|^2}K(x)dx\right)^{\frac{s}{2}}\left(\int_{\mathbb{R}^n}|u|^{\left(2^*(s)-s\right)\frac{2}{2-s}}K(x)dx\right)^{\frac{2-s}{2}} \\
& =\left(\int_{\mathbb{R}^n}  \frac{|u|^2}{|x|^2}K(x)dx\right)^{\frac{s}{2}}\left(\int_{\mathbb{R}^n}|u|^{\frac{2N}{N-2}}K(x)dx\right)^{\frac{2-s}{2}} \\
& \leq C\left(\int_{\mathbb{R}^n}|\nabla u|^2K(x)\right)^{\frac{s}{2}}\left(\int_{\mathbb{R}^n}|\nabla u|^2K(x)\right)^{\frac{2^*}{2} \cdot \frac{2-s}{2}} \\
& =C\left(\int_{\mathbb{R}^n}|\nabla u|^2K(x)dx\right)^{\frac{N-s}{N-2}}\\
& =C\left(\int_{\mathbb{R}^n}|\nabla u|^2K(x)dx\right)^{\frac{2^{\ast}(s)}{2}}.
\end{align*}
By similar arguments for $2 \leq q < 2^*(s)$, we can also  obtain
$$
\left(\int_{\mathbb{R}^N} \frac{|u|^q}{|x|^s} K(x)d x\right) \leq C\left(\int_{\mathbb{R}^N}|\nabla u|^2K(x)d x\right)^\frac{q}{2}.
$$

(2)  Let $\{u_n\} \subset X$  such that

$$
u_n \rightharpoonup u \quad \text{in}\  X
$$
and, for instance, $\left\|\nabla u_n\right\|_{K,2} \leqslant 1$. Since  $X \subset H^1\left(\mathbb{R}^N\right)$,
 one can suppose that (by Rellich's compactness theorem):
$$
u_n \rightarrow u \quad \text { in } L_{\text {loc }}^2\left(\mathbb{R}^N,\frac{1}{|x|^s}\right) .
$$
Then , we  have
$$
\int_{\mathbb{R}^N}\left|u_n-u\right|^2 \frac{1}{|x|^s}K(x) d x=\int_{|x| \leqslant R}\left|u_n-u\right|^2 \frac{1}{|x|^s}K(x)dx +\int_{|x|>R}\left|u_n-u\right|^2 \frac{1}{|x|^s}K(x)dx.
$$
Given an $\varepsilon>0$, we  choose $R$ large enough such that
\begin{equation}\label{s3}
 \left(\frac{N+s}{2}|x|^s-s(N+s-2)|x|^{s-2}+\left(\frac{1}{4}-\frac{s}{2|x|^2}\right)^2 |x|^{2+s}\right)\geqslant 2 / \varepsilon , \ \ |x| \geqslant R.
\end{equation}
 Let $v:=|x|^{-\frac{s}{2}}e^{\frac{|x|^2}{8}}u$. Then one sees easily that
$$
\begin{aligned}
\nabla v= |x|^{-\frac{s}{2}} e^{\frac{|x|^2}{8}} \nabla u +e^{\frac{|x|^2}{8}}|x|^{-\frac{s}{2}}\left(\frac{1}{4}-\frac{s}{2|x|^2}\right) x u,
\end{aligned}
$$
that is,
\begin{equation}\label{s7}
 \begin{aligned}
  e^{\frac{|x|^2}{8}} \nabla u =|x|^{\frac{s}{2}} \nabla v-e^{\frac{|x|^2}{8}}\left(\frac{1}{4}-\frac{s}{2|x|^2}\right) x u
 =|x|^{\frac{s}{2}}\nabla v-|x|^{\frac{s}{2}}v\left(\frac{1}{4}-\frac{s}{2|x|^2}\right) x.
\end{aligned}
\end{equation}
Integrating both sides of the square of  (\ref{s7}) yields

\begin{equation}\label{s5}
\begin{aligned}
 \int_{\mathbb{R}^N}  e^{\frac{|x|^2}{4}} |\nabla u|^2dx & =\int_{\mathbb{R}^N} |\nabla v|^2|x|^sdx
 +\int_{\mathbb{R}^N} v^2\left(\frac{1}{4}-\frac{s}{2|x|^2}\right)^2 |x|^2|x|^{s}dx\\
 &\ \ \ -2\int_{\mathbb{R}^N} \left(\frac{1}{4}-\frac{s}{2|x|^2}\right)|x|^{s} x\nabla v^2dx.
\end{aligned}
\end{equation}
And integrating  the RHS cross term of \eqref{s5}  by parts, we obtain
\begin{equation}\label{s4}
  \begin{aligned}
  -2\int_{\mathbb{R}^N}\left(\frac{1}{4}-\frac{s}{2|x|^2}\right)|x|^s x\nabla v^2dx
  &=2\int_{\mathbb{R}^N} \operatorname{div}\left(\left(\frac{1}{4}-\frac{s}{2|x|^2}\right)|x|^{s} x\right)v^2dx\\
  &=\int_{\mathbb{R}^N} \left(\frac{N+s}{2}|x|^s-s(N+s-2)|x|^{s-2}\right)v^2dx.
  \end{aligned}
\end{equation}
Hence,by (\ref{s5}) and (\ref{s4}) we have
\begin{equation}\label{s6}
 \begin{aligned}
 \int_{\mathbb{R}^N}  e^{\frac{|x|^2}{4}} |\nabla u|^2dx&\geq\int_{\mathbb{R}^N}
 \left(\frac{N+s}{2}|x|^s-s(N+s-2)|x|^{s-2}+\left(\frac{1}{4}-\frac{s}{2|x|^2}\right)^2 |x|^{2+s}\right)v^2dx.
  \end{aligned}
\end{equation}
Then using this together with \eqref{s3} we get
$$
\int_{|x| \geqslant R}\left|u_n-u\right|^2 |x|^{-s}K(x) d x
\leqslant \varepsilon C \int_{|x| \geqslant R}\left|\nabla\left(u_n-u\right)\right|^2 |x|^{-s}K(x)d x \leqslant \varepsilon C
$$
uniformly on $N \geqslant 3$.

On the other hand,  According to  Lemma 3.2 of \cite{cr1}, for fixed $R$, we may choose sufficiently large $n$ such that
$$
 \int_{|x|<R}\left|u_n-u\right|^2 K(x) \frac{1}{|x|^s}dx\leqslant \varepsilon.
$$
Hence $u_n \rightarrow u$ in $L_K^2\left(\mathbb{R}^N, \frac{1}{|x|^s}\right)$.

Now assume that  $\theta$ satisfies $$
\frac{1}{q}=\frac{\theta}{2}+\frac{1-\theta}{2^{\ast}(s)}, \text{\ that\  is,\  }   1=\frac{\theta q}{2}+\frac{(1-\theta)q}{2^{\ast}(s)}.$$
Then  the H\"{o}lder inequality implies that
\begin{equation}\label{s8}
  \begin{aligned}
\int_{\mathbb{R}^N}\left|u_n-u\right|^q \frac{K(x)}{|x|^s} d x&=\int_{\mathbb{R}^N}\left|u_n-u\right|^{\theta q} \left[\frac{K(x)}{|x|^s}\right]^{\frac{\theta q}{2}} \left|u_n-u\right|^{(1-\theta)q} \left[\frac{K(x)}{|x|^s}\right]^{\frac{(1-\theta)q}{2^{\ast(s)}}}d x\\
&\leq\left(\int_{\mathbb{R}^N}\left|u_n-u\right|^2 \frac{K(x)}{|x|^s} d x\right)^\frac{\theta q}{2}
\left(\int_{\mathbb{R}^N}\left|u_n-u\right|^{2^{\ast}(s)} \frac{K(x)}{|x|^s} d x\right)^{\frac{(1-\theta)q}{2^{\ast(s)}}} .
\end{aligned}
\end{equation}
Combining Lemma \ref{l02} (1) with the boundedness of  $u_n$ and $u$ in $X$, we easily obtain $u_n \rightarrow u$ in $L_K^q\left(\mathbb{R}^N, \frac{1}{|x|^s}\right)$.

\end{proof}
\begin{lemma}(\cite{cr3})\label{bl}
Suppose $v_n \rightarrow v$ a.e. in $\mathbb{R}^N$ and $\left\|v_n\right\|_{K,q} \leq C<\infty$ for all $n$ and for some $1<q<\infty$. Then
$$
\|v_n-v\|_{K,q}^{q}=\|v_n\|_{K,q}^{q}-\|v\|_{K,q}^{q}+o(1).
$$
\end{lemma}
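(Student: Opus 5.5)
This is the weighted Brezis--Lieb lemma, and I would prove it by the classical Brezis--Lieb argument. The only change is that Lebesgue measure is replaced by the measure $d\mu = K(y)\,dy$. Since $K>0$ is continuous, $d\mu$ is mutually absolutely continuous with Lebesgue measure. So ``a.e.'' means the same thing for both measures, and all the measure-theoretic tools (Fatou, dominated convergence) are available on $(\mathbb{R}^N,\mu)$. The hypothesis says exactly that $\{v_n\}$ is bounded in $L^q(\mu)$.

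First, Fatou's lemma applied to $|v_n|^q$ gives $\|v\|_{K,q}\le \liminf_n\|v_n\|_{K,q}\le C$, so $v\in L^q_K$. Next I would use the elementary inequality: for every $\varepsilon>0$ there is $C_\varepsilon>0$ with
$$\bigl||a+b|^q-|a|^q\bigr|\le \varepsilon|a|^q+C_\varepsilon|b|^q,\qquad a,b\in\mathbb{R}.$$
This follows from convexity together with Young's inequality, since $1<q<\infty$. Take $a=v_n-v$ and $b=v$, and set
$$W_{\varepsilon,n}:=\Bigl(\bigl||v_n|^q-|v_n-v|^q-|v|^q\bigr|-\varepsilon|v_n-v|^q\Bigr)^+ .$$
Then $0\le W_{\varepsilon,n}\le (1+C_\varepsilon)|v|^q$, which lies in $L^1(\mu)$. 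Because $v_n\to v$ a.e., we have $W_{\varepsilon,n}\to0$ a.e. Dominated convergence in $L^1(\mu)$ therefore gives $\int W_{\varepsilon,n}K\,dy\to0$.

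Finally,
$$\int\bigl||v_n|^q-|v_n-v|^q-|v|^q\bigr|K\,dy\le \int W_{\varepsilon,n}K\,dy+\varepsilon\sup_n\|v_n-v\|_{K,q}^q .$$
The supremum is at most $2^{q-1}(C^q+\|v\|_{K,q}^q)$ by Minkowski's inequality. Taking the $\limsup$ in $n$ and then letting $\varepsilon\to0$ shows the left side tends to $0$. This implies $\|v_n\|^q_{K,q}-\|v_n-v\|^q_{K,q}-\|v\|^q_{K,q}=o(1)$, which is the claim.

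The only step needing care is the elementary inequality. I would prove it by applying the mean value theorem to $t\mapsto|t|^q$, which gives $\bigl||a+b|^q-|a|^q\bigr|\le q(|a|+|b|)^{q-1}|b|$. Then I would bound $(|a|+|b|)^{q-1}\le 2^{q-1}\bigl(|a|^{q-1}+|b|^{q-1}\bigr)$ and apply Young's inequality $|a|^{q-1}|b|\le \delta|a|^q+C_\delta|b|^q$, choosing $\delta$ so that the coefficient of $|a|^q$ is $\varepsilon$. Everything else goes through verbatim because $K$ enters only through the measure.
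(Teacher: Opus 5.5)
Your proof is correct and follows the same approach as the paper. The paper states this lemma only by citation, with no proof, and relies on the classical Brezis--Lieb argument: the inequality $\bigl||a+b|^q-|a|^q\bigr|\le\varepsilon|a|^q+C_\varepsilon|b|^q$, truncation by $W_{\varepsilon,n}$, and dominated convergence, here carried out on the measure space $(\mathbb{R}^N,K(y)\,dy)$; as you note, the weight enters only through the measure, so the argument carries over verbatim.
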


\begin{lemma}(\cite{cr1})\label{b2}
Suppose $v_n \rightarrow v$ a.e. in $\mathbb{R}^N$ and $\left\|v_n\right\|_{K,q,s} \leq C<\infty$ for all $n$ and for some $1<q<\infty$. Then
$$
\|v_n-v\|_{K,q,s}^{q}=\|v_n\|_{K,q,s}^{q}-\|v\|_{K,q,s}^{q}+o(1).
$$
\end{lemma}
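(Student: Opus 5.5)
The statement is the weighted Brezis--Lieb lemma for $L_K^q(\mathbb{R}^N,|x|^{-s})$. The natural route is to reduce it to the classical Brezis--Lieb lemma on a suitable measure space, since the weighted norm is simply an $L^q$ norm with respect to an absolutely continuous measure. Set
\[
d\mu(x):=\frac{K(x)}{|x|^s}\,dx .
\]
The density $K(x)|x|^{-s}$ is positive and locally integrable because $s<2\le N$. So $\mu$ is a $\sigma$-finite Borel measure equivalent to Lebesgue measure, and $\mu$-null sets are exactly Lebesgue-null sets. Hence $v_n\to v$ a.e. in $\mathbb{R}^N$ also holds $\mu$-a.e. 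Moreover $\|w\|_{K,q,s}^q=\int|w|^q\,d\mu$, so the hypothesis says $\{v_n\}$ is bounded in $L^q(\mu)$.

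We then run the standard Brezis--Lieb argument with respect to $\mu$.

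First, Fatou's lemma gives $\int|v|^q\,d\mu\le\liminf\int|v_n|^q\,d\mu\le C^q$, so $v\in L^q(\mu)$. In particular $\|v\|_{K,q,s}^q$ is finite and the expansion is meaningful.

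Second, we use the elementary inequality: for every $\varepsilon>0$ there is $C_\varepsilon$ such that for all $a,b\in\mathbb{R}$,
\[
\bigl||a+b|^q-|a|^q\bigr|\le\varepsilon|a|^q+C_\varepsilon|b|^q .
\]
It follows from the mean value theorem and Young's inequality, using $q>1$. Apply it with $a=v_n-v$ and $b=v$. Define
\[
W_{\varepsilon,n}:=\Bigl(\bigl||v_n|^q-|v_n-v|^q-|v|^q\bigr|-\varepsilon|v_n-v|^q\Bigr)^+ .
\]
Then $0\le W_{\varepsilon,n}\le(1+C_\varepsilon)|v|^q\in L^1(\mu)$. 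Also $W_{\varepsilon,n}\to0$ $\mu$-a.e., since $v_n-v\to0$ a.e. and the function $|\cdot|^q$ is continuous.

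Third, dominated convergence gives $\int W_{\varepsilon,n}\,d\mu\to0$. Therefore
\[
\limsup_n\int\bigl||v_n|^q-|v_n-v|^q-|v|^q\bigr|\,d\mu\le\varepsilon\sup_n\|v_n-v\|_{K,q,s}^q\le\varepsilon(2C)^q .
\]
Letting $\varepsilon\to0$ yields
\[
\|v_n\|_{K,q,s}^q-\|v_n-v\|_{K,q,s}^q-\|v\|_{K,q,s}^q=o(1),
\]
which is the claim.

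The only step needing care is the first: justifying that the Brezis--Lieb framework applies to $\mu$. This requires local integrability of $|x|^{-s}$ near the origin, which holds because $s<2<N$ in our setting, and equivalence of $\mu$-null sets with Lebesgue-null sets, so that the a.e. convergence hypothesis transfers. Once this is settled, everything else is measure-independent. Lemma \ref{bl} is exactly the special case $s=0$ of the same argument.
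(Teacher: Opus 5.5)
Your proof is correct and is essentially what the paper relies on. The paper gives no argument of its own and simply cites the Brezis--Lieb lemma from \cite{cr1}, whose standard proof is exactly your $\varepsilon$-inequality plus dominated-convergence argument, here run on the measure $K(x)|x|^{-s}\,dx$; the local integrability of $|x|^{-s}$ that you flag is not actually needed, since the argument works for any measure absolutely continuous with respect to Lebesgue measure.
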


\section{Hardy subcritical singular case}
In this section, we deal with the Hardy subcritical singular  case and employ the fountain theorem to establish the existence of infinitely many solutions.
Our main result is as follow.
\begin{theorem}\label{t31}
 Suppose $\alpha\in \mathbb{R}$ and  $2 < q<2^*(s)$.
Then, problem \eqref{e02} has solutions $\{\pm v_{k}\}_{k=1}^{\infty}$ such that $E_{K}(\pm v_{k})\to+\infty\ \mathrm{as\ }k\ \to\infty$.
\end{theorem}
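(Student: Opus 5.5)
We will apply the Fountain Theorem of Bartsch (in the form given by Willem) to the even functional $E_K$ on $X$. Since $E_K(-v)=E_K(v)$, the $\mathbb{Z}_2$ symmetry is available. For the splitting we use the spectral structure of $L$ recalled in the introduction: $L$ is self-adjoint, positive, with compact inverse, so its normalized eigenfunctions $\{e_j\}$ (eigenvalues $\lambda_j\to\infty$) form an orthonormal basis of $L^2_K(\mathbb{R}^N)$ and, after rescaling, an orthogonal basis of $X$. We set
\begin{equation*}
X_j=\mathbb{R}e_j,\qquad Y_k=\bigoplus_{j\le k}X_j,\qquad Z_k=\overline{\bigoplus_{j\ge k}X_j}.
\end{equation*}
Three ingredients then have to be verified: the $(PS)_c$ condition for every $c$; the bound $\max_{v\in Y_k,\|v\|=\rho_k}E_K(v)\le 0$; and $\inf_{v\in Z_k,\|v\|=r_k}E_K(v)\to+\infty$ with $\rho_k>r_k>0$. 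Here $\|v\|=\|\nabla v\|_{K,2}$.

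\textbf{Step 1 ($(PS)_c$).} Let $E_K(v_n)\to c$ and $E_K'(v_n)\to 0$. We compute
\begin{equation*}
qE_K(v_n)-\langle E_K'(v_n),v_n\rangle=\left(\tfrac q2-1\right)A(v_n).
\end{equation*}
Since $\alpha$ is arbitrary, $A$ need not be coercive. We handle this by contradiction. If $\|v_n\|\to\infty$, set $w_n=v_n/\|v_n\|$, so that $A(v_n)=O(1+\|v_n\|)$ gives $A(w_n)\to0$. By Lemma \ref{l00}, up to a subsequence $w_n\rightharpoonup w$ with $w_n\to w$ in $L^2_K$. Then $1-\alpha\|w_n\|_{K,2}^2\to0$, so $\alpha>0$ and $w\ne0$. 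On the other hand, $B(v_n)=\tfrac q2A(v_n)-qE_K(v_n)=O(1+\|v_n\|)$, so $B(w_n)\to0$. Lemma \ref{l02}(2) (compactness for $q<2^*(s)$) then forces $B(w)=0$, i.e. $w=0$, a contradiction.

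Hence $(v_n)$ is bounded. Passing to a subsequence, $v_n\rightharpoonup v$ in $X$, with strong convergence in $L^2_K$ and in $L^q_K(\mathbb{R}^N,|x|^{-s})$ by Lemmas \ref{l00} and \ref{l02}(2). Then
\begin{equation*}
\|v_n-v\|^2=\langle E_K'(v_n)-E_K'(v),v_n-v\rangle+\alpha\|v_n-v\|_{K,2}^2+\int\frac{(|v_n|^{q-2}v_n-|v|^{q-2}v)(v_n-v)}{|x|^s}K\,dx\to0,
\end{equation*}
where the last integral vanishes by Hölder's inequality in the weighted space.

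\textbf{Step 2 (finite-dimensional bound).} On $Y_k$ all norms are equivalent, so $B(v)\ge c_k\|v\|^q$ and $A(v)\le\|v\|^2+|\alpha|\lambda_1^{-1}\|v\|^2$ by (\ref{po}). Since $q>2$, $E_K(v)\le0$ for $\|v\|=\rho_k$ large.

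\textbf{Step 3 (lower bound on $Z_k$; the main obstacle).} Define
\begin{equation*}
\beta_k=\sup_{v\in Z_k,\|v\|=1}\|v\|_{K,q,s}.
\end{equation*}
We claim $\beta_k\to0$. The sequence $\beta_k$ is decreasing; pick $v_k\in Z_k$ with $\|v_k\|=1$ and $\|v_k\|_{K,q,s}\ge\beta_k/2$. Then $v_k\rightharpoonup0$ in $X$, since each $e_j$ is eventually orthogonal to $v_k$. The compact embedding of Lemma \ref{l02}(2) gives $\|v_k\|_{K,q,s}\to0$, hence $\beta_k\to0$.

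For $v\in Z_k$ we have $\|v\|_{K,2}^2\le\lambda_k^{-1}\|v\|^2$ (Rayleigh quotient on $Z_k$), so for $k$ large, $\alpha\lambda_k^{-1}\le1/2$ whatever the sign of $\alpha$. This gives
\begin{equation*}
E_K(v)\ge\tfrac14\|v\|^2-\tfrac1q\beta_k^q\|v\|^q.
\end{equation*}
Choosing $r_k=(q\beta_k^q/2)^{-1/(q-2)}\to\infty$ yields $E_K\ge(\tfrac14-\tfrac12\cdot\tfrac{1}{2}\cdot\ldots)$, and more simply $E_K\ge c\,r_k^2\to\infty$ on $\|v\|=r_k$, after enlarging $\rho_k$ if needed so that $\rho_k>r_k$.

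The delicate points are the compactness and eigenbasis facts: that $\{e_j\}$ is orthogonal in $X$ (which follows from $(e_i,e_j)_K=\lambda_i(e_i,e_j)_{L^2_K}$) and the validity of the compact embedding in Lemma \ref{l02}(2). The Fountain Theorem then yields critical points $\pm v_k$ with $E_K(\pm v_k)\ge\inf_{Z_k\cap\{\|v\|=r_k\}}E_K\to\infty$.
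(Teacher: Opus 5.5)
Your argument is correct up to one constant (see the last paragraph). It follows the same Fountain-theorem skeleton as the paper: norm equivalence on $Y_k$ for (A3), $\beta_k\to0$ via weak convergence in $Z_k$ and the compact embedding of Lemma~\ref{l02}(2), and the Palais--Smale condition from boundedness plus compactness. Two of your ingredients differ from the paper's in a way that matters.

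First, you split $X$ along the eigenfunctions of $L$, so the bound $\|v\|_{K,2}^2\le\lambda_k^{-1}\|\nabla v\|_{K,2}^2$ on $Z_k$ is immediate. The paper instead works with an abstract biorthogonal system and proves a second decay statement, $\gamma_k\to0$, by the same compactness argument used for $\beta_k$. The two are equivalent in effect; yours is simply more explicit.

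Second, and more substantively, your boundedness proof for Palais--Smale sequences works for every $\alpha\in\mathbb{R}$. You normalize $w_n=v_n/\|\nabla v_n\|_{K,2}$, deduce $A(w_n)\to0$ and $B(w_n)\to0$, and use compactness of both embeddings to reach a contradiction. The paper's Lemma~\ref{ll32} instead controls $\|\nabla v_n\|_{K,2}^2$ by the quadratic form through the Poincar\'e inequality \eqref{po}. That requires $A$ to be coercive, i.e.\ $\alpha<\lambda_1=N/2$, and as written it even has $\frac{N-2}{4}$ in place of $\alpha$. For $\alpha\ge N/2$, which the statement allows, your route is the one that actually closes the argument. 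Note that the self-similar value $\alpha=\frac{2-s}{2q-4}$ itself exceeds $N/2$ once $q<2+\frac{2-s}{N}$. You also assume the full condition $E_K'(v_n)\to0$ in $X^*$. The paper's convergence step $\langle E_K'(v_n)-E_K'(v),v_n-v\rangle\to0$ tacitly needs this, even though its hypothesis only assumes $\langle E_K'(v_n),v_n\rangle\to0$.

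The one fix needed is in Step 3. With $r_k=(q\beta_k^q/2)^{-1/(q-2)}$ you get $\frac1q\beta_k^q r_k^q=\frac{2}{q^2}r_k^2$, hence only $E_K\ge(\frac14-\frac{2}{q^2})r_k^2$ on the sphere. This is not positive when $2<q\le2\sqrt2$. Take instead $r_k=(2\beta_k^q)^{-1/(q-2)}$, the maximizer of $\frac14r^2-\frac1q\beta_k^qr^q$; this is the paper's choice with $c_2=1/q$. Then $E_K\ge\frac{q-2}{4q}r_k^2\to\infty$ on $Z_k\cap\{\|\nabla v\|_{K,2}=r_k\}$, and the rest of your argument goes through.
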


This result shows that the heat equation \eqref{e01} admits infinitely many self-similar solutions, and

$$\int_{\mathbb{R}^N} |u|^q dx= t^{-\alpha} \int_{\mathbb{R}^N} |v|^qdy\rightarrow 0,\ \ \text{as}\ \  t\rightarrow+\infty. $$

As $X$ is a separable and reflexive Banach space, there exist $\{e_n\}_{n=1}^{\infty}\subset X$ and $\{f_n\}_{n=1}^{\infty}\subset X^*$ such that
$$f_n(e_m)=\delta_{n,m}=
\begin{cases}
1 & \text{if } n=m, \\
0 & \text{if } n\ne m,
\end{cases}$$
$$X=\overline{\text{span}}\{e_n:n=1,2,\dots\},\quad X^*=\overline{\text{span}}^{W^*}\{f_n:n=1,2,\dots\}.$$
For $k=1,2,\dots$, denote
\begin{equation}\label{e331}
  X_k=\text{span}\{e_k\},\ Y_k=\bigoplus_{j=1}^k X_j,\ Z_k=\overline{\bigoplus_{j=k}^\infty X_j}.
\end{equation}
\begin{lemma}\label{ll31}
 (Fountain theorem, see \cite{cr4}). Assume that
 \begin{description}
   \item[(A1)] $X$ is a Banach space, $I\in C^1(X,\mathbb{R})$ is an even functional,
   the subspaces $X_k$, $Y_k$ and $Z_k$ are defined by (\ref{e331}).

   If for each $k=1,2,\dots$, there exists $\rho_k>r_k>0$ such that
   \item[(A2)]  $\inf_{u\in Z_k,\ \|u\|=r_k}I(u)\to +\infty$ as $k\to\infty$,
   \item[(A3)] $\max_{u\in Y_k,\ \|u\|=\rho_k}I(u)\leqslant 0$,
   \item[(A4)] $I$ satisfies $(\mathrm{PS})_c$ condition for every $c>0$.

   Then $I$ has a sequence of critical values tending to $+\infty$.
 \end{description}
\end{lemma}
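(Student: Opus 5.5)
The approach is the standard minimax argument over a class of odd maps, as in \cite{cr4}: a topological intersection property forces the minimax levels above $\inf_{Z_k\cap\{\|u\|=r_k\}} I$, and a symmetric deformation argument then shows these levels are critical. For each $k$ I set
$$
B_k:=\{u\in Y_k:\ \|u\|\le \rho_k\},\qquad N_k:=\{u\in Z_k:\ \|u\|=r_k\},
$$
$$
\Gamma_k:=\{\gamma\in C(B_k,X):\ \gamma \text{ is odd and } \gamma|_{\partial B_k}=\mathrm{id}\},
$$
and define
$$
c_k:=\inf_{\gamma\in\Gamma_k}\max_{u\in B_k} I(\gamma(u)),\qquad b_k:=\inf_{u\in N_k} I(u).
$$
The identity belongs to $\Gamma_k$, so $\Gamma_k\neq\emptyset$ and $c_k<\infty$.

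\textbf{Step 1 (intersection lemma).} I will show that $\gamma(B_k)\cap N_k\neq\emptyset$ for every $\gamma\in\Gamma_k$.

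1. Let $P_{k-1}u:=\sum_{j<k} f_j(u)e_j$. This is a continuous linear projection of $X$ onto $Y_{k-1}$. Its kernel contains $Z_k$, because $f_j(e_m)=0$ for $j<k\le m$ and each $f_j$ is continuous.
2. Consider $U:=\{u\in B_k:\ \|\gamma(u)\|<r_k\}$. It is open relative to $B_k$, symmetric, and contains $0$, since $\gamma(0)=0$ by oddness.
3. Because $\gamma=\mathrm{id}$ on $\partial B_k$ and $r_k<\rho_k$, the closure of $U$ does not meet $\partial B_k$. Hence $U$ is a bounded symmetric open neighbourhood of $0$ in the $k$-dimensional space $Y_k$.
4. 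The map $P_{k-1}\circ\gamma:\partial U\to Y_{k-1}$ is continuous and odd, and $\dim Y_{k-1}=k-1<k$. By the Borsuk--Ulam theorem (in its degree form) it has a zero $u\in\partial U$.
5. At this $u$ we have $\|\gamma(u)\|=r_k$ and $P_{k-1}\gamma(u)=0$. I then need $\gamma(u)\in Z_k$. This holds once $\ker P_{k-1}=Z_k$, which follows from $X=\overline{\mathrm{span}}\{e_n\}$ together with biorthogonality. This is the point I must check with care.

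It follows that $\max_{B_k} I\circ\gamma\ge b_k$ for every $\gamma\in\Gamma_k$, so $c_k\ge b_k$. By (A2), $b_k\to+\infty$, hence $c_k\to+\infty$.

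\textbf{Step 2 (each $c_k$ with $c_k>0$ is a critical value).} Fix $k$ large enough that $c_k>0$. Suppose, for contradiction, that $c_k$ is not a critical value.

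1. By $(\mathrm{PS})_{c_k}$ from (A4), together with the evenness of $I$, the equivariant quantitative deformation lemma gives $\varepsilon\in(0,c_k/2)$ and an odd homeomorphism $\eta$ of $X$ with two properties. First, $\eta(I^{c_k+\varepsilon})\subset I^{c_k-\varepsilon}$. Second, $\eta(u)=u$ whenever $I(u)\notin[c_k-2\varepsilon,c_k+2\varepsilon]$.
2. Choose $\gamma\in\Gamma_k$ with $\max_{B_k} I\circ\gamma\le c_k+\varepsilon$.
3. On $\partial B_k$, assumption (A3) gives $I\le 0<c_k-2\varepsilon$, so $\eta\circ\gamma=\mathrm{id}$ there. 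Since $\eta\circ\gamma$ is also odd, $\eta\circ\gamma\in\Gamma_k$.
4. But then $\max_{B_k} I\circ\eta\circ\gamma\le c_k-\varepsilon$, which contradicts the definition of $c_k$.

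Hence $c_k$ is a critical value. Since $c_k\to+\infty$, $I$ has a sequence of critical values tending to $+\infty$, which is the conclusion of Lemma \ref{ll31}.

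\textbf{Expected main obstacle.} The hardest step is the equivariant deformation lemma in a general Banach space. There is no gradient available, so I would build a locally Lipschitz pseudo-gradient vector field $V$ on $\{u: I'(u)\neq0\}$ and replace it by its odd part $\tfrac12\big(V(u)-V(-u)\big)$. This is still a pseudo-gradient field because $I'$ is odd when $I$ is even. I would then cut it off near the level strip and integrate the resulting flow. The $(\mathrm{PS})_{c_k}$ condition supplies a uniform lower bound on $\|I'\|$ on $I^{-1}([c_k-2\varepsilon,c_k+2\varepsilon])$, and that bound is what lets the flow push $I^{c_k+\varepsilon}$ into $I^{c_k-\varepsilon}$ in finite time.
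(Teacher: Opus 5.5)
Your proof is correct and is essentially the standard argument from \cite{cr4} that the paper cites without giving a proof: the Borsuk--Ulam intersection lemma gives $c_k\ge b_k\to+\infty$, and the equivariant deformation lemma, applied with $c_k-2\varepsilon>0$ so that (A3) keeps $\eta\circ\gamma\in\Gamma_k$, shows each large $c_k$ is critical. The step you flagged closes at once: if $P_{k-1}u=0$ and $u_m\to u$ with $u_m\in\mathrm{span}\{e_n\}$, then $u_m-P_{k-1}u_m\in\mathrm{span}\{e_j: j\ge k\}$ and, by continuity of $P_{k-1}$, it converges to $u-P_{k-1}u=u$, so $\ker P_{k-1}=Z_k$.
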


\begin{lemma}\label{ll32}
 Assume that the functional $E_K(v)$ is defined as in (\ref{ef}),  $N\geq 3$. Then for every $\beta\in \mathbb{R}$,
  every sequence $\left\{v_n\right\}$ in $X$ such that, as $n \rightarrow \infty$,

\begin{equation}\label{o01}
E_K\left(v_n\right) \rightarrow \beta\in \mathbb{R}, \quad \langle E_{K}^{\prime}(v_n), v_n\rangle \rightarrow 0,
\end{equation}
is relatively compact in $X$.
\end{lemma}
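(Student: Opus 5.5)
The plan is to extract the two scalar identities contained in \eqref{o01}, use them to bound $\{v_n\}$ in $X$, and then pass to the limit using the compact embeddings of Lemma \ref{l00} and Lemma \ref{l02}(2). Write $A(v)=\|\nabla v\|_{K,2}^2-\alpha\|v\|_{K,2}^2$ and $B(v)=\int_{\mathbb{R}^N}|v|^q|y|^{-s}K\,dy$ as in \eqref{ef}. The hypotheses say $\tfrac12A(v_n)-\tfrac1qB(v_n)\to\beta$ and $A(v_n)-B(v_n)\to0$. Subtracting gives $\bigl(\tfrac12-\tfrac1q\bigr)B(v_n)\to\beta$. Since $q>2$, both $B(v_n)$ and $A(v_n)$ are bounded, and $\beta\ge 0$.

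\textbf{Step 1 (boundedness in $X$).} Because $\alpha$ is an arbitrary real number, $A$ is not coercive, so I argue by contradiction. Suppose $t_n:=\|\nabla v_n\|_{K,2}\to\infty$ and set $w_n=v_n/t_n$. After passing to a subsequence, $w_n\rightharpoonup w$ in $X$. By Lemma \ref{l00}, $w_n\to w$ in $L^2_K$, and by Lemma \ref{l02}(2), $w_n\to w$ in $L^q_K(\mathbb{R}^N,|y|^{-s})$, because $q<2^*(s)$. From $A(v_n)/t_n^2\to0$ we get $1-\alpha\|w_n\|_{K,2}^2\to0$. Hence $\alpha\|w\|_{K,2}^2=1$, which forces $w\neq0$ (and $\alpha>0$). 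On the other hand $B(w_n)=B(v_n)/t_n^q\to0$, so $B(w)=0$ and $w=0$. This contradiction shows that $\{v_n\}$ is bounded in $X$.

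\textbf{Step 2 (passing to the limit).} Take a subsequence with $v_n\rightharpoonup v$ in $X$. The same compact embeddings give $\|v_n\|_{K,2}\to\|v\|_{K,2}$ and $B(v_n)\to B(v)$. Then
$$\|\nabla v_n\|_{K,2}^2=A(v_n)+\alpha\|v_n\|_{K,2}^2=B(v_n)+\alpha\|v_n\|_{K,2}^2+o(1)\to B(v)+\alpha\|v\|_{K,2}^2 .$$
In the Hilbert space $X$, weak convergence together with convergence of norms gives strong convergence. It therefore remains to prove
$$\|\nabla v\|_{K,2}^2=\alpha\|v\|_{K,2}^2+B(v),\quad\text{that is,}\quad \langle E_K'(v),v\rangle=0 .$$

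\textbf{Main obstacle.} The identity $\langle E_K'(v),v\rangle=0$ is the crux. With $E_K'(v_n)\to0$ in $X^*$ it is immediate: test against a fixed $\phi$, use weak convergence and the compact embeddings to get $E_K'(v)=0$, and then take $\phi=v$. With only the scalar condition in \eqref{o01}, write $v_n=v+z_n$ with $z_n\rightharpoonup0$. Then $A(v_n)=A(v)+\|\nabla z_n\|_{K,2}^2+o(1)$. Combined with $A(v_n)-B(v_n)\to0$ and $B(v_n)\to B(v)$, this gives
$$\lim_{n\to\infty}\|\nabla z_n\|_{K,2}^2=B(v)-A(v).$$
So strong convergence holds exactly when $\langle E_K'(v),v\rangle=0$.

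The argument above does not produce that identity. In fact the hypothesis seems too weak: if some $v$ has $A(v)<B(v)$, then adding to $v$ weakly null $z_n$ with $\|\nabla z_n\|_{K,2}^2=B(v)-A(v)$ appears to satisfy \eqref{o01}, with $\beta=(\tfrac12-\tfrac1q)B(v)$, without being relatively compact. I have not checked whether such a $v$ exists, or whether in the paper's later use of this lemma the sequences are genuine $(\mathrm{PS})_c$ sequences, as (A4) of Lemma \ref{ll31} requires.

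To close the proof I would first look for extra structure in that use which yields $E_K'(v_n)\to0$ in $X^*$, and then conclude as in Step 2. Steps 1 and 2 themselves are valid under the stated hypothesis and reduce everything to this one identity.
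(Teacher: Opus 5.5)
Your diagnosis is correct, and the counterexample you sketch is genuine. No argument can prove the lemma with the hypothesis exactly as written in \eqref{o01}.

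\textbf{The counterexample.} The check you left open follows by scaling. For any $u\in X\setminus\{0\}$ one has $B(u)>0$. Since $q>2$, the function $v:=tu$ satisfies $A(v)=t^2A(u)<t^qB(u)=B(v)$ once $t$ is large. Take an orthonormal sequence $(e_n)$ in $X$, so that $e_n\rightharpoonup 0$. Put $z_n:=\sqrt{B(v)-A(v)}\,e_n$ and $v_n:=v+z_n$.
\begin{itemize}
\item By Lemma \ref{l00} and Lemma \ref{l02}(2), $z_n\to 0$ in $L^2_K(\mathbb{R}^N)$ and in $L^q_K(\mathbb{R}^N,|y|^{-s})$.
\item Hence $A(v_n)=A(v)+\|\nabla z_n\|_{K,2}^2+o(1)=B(v)+o(1)$ and $B(v_n)\to B(v)$.
\item So \eqref{o01} holds with $\beta=\bigl(\frac{1}{2}-\frac{1}{q}\bigr)B(v)$.
\item But $\|\nabla(v_n-v_m)\|_{K,2}^2=2\bigl(B(v)-A(v)\bigr)>0$ for $n\neq m$, so no subsequence converges in $X$.
\end{itemize}

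\textbf{Where the paper uses more.} The paper's proof tacitly assumes $E_K'(v_n)\to 0$ in $X^*$. The decisive step is the assertion $\langle E_K'(v_n)-E_K'(v),v_n-v\rangle_K\to 0$. The scalar hypothesis gives no control of $\langle E_K'(v_n),v\rangle$; in your example it tends to $A(v)-B(v)\neq 0$. The stronger hypothesis is exactly what the $(\mathrm{PS})_c$ condition (A4) of Lemma \ref{ll31} requires. The lemma should therefore be read with $E_K'(v_n)\to 0$ in $X^*$.

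\textbf{Under the corrected hypothesis your proof is complete.} Test $E_K'(v_n)\to 0$ against a fixed $\phi\in X$. The compact embeddings and the continuity of $u\mapsto |u|^{q-2}u$ from $L^q_K(\mathbb{R}^N,|y|^{-s})$ into $L^{q/(q-1)}_K(\mathbb{R}^N,|y|^{-s})$ give $E_K'(v)=0$. In particular $\langle E_K'(v),v\rangle=0$, and your Step 2 then gives norm convergence, hence strong convergence. The paper instead expands $\|\nabla(v_n-v)\|_{K,2}^2$ through $\langle E_K'(v_n)-E_K'(v),v_n-v\rangle_K$; this is equivalent.

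\textbf{Comparison of Step 1.} Here you genuinely differ from the paper, and your version is better. The paper's coercivity estimate writes $\frac{N-2}{4}$ in place of $\alpha$ and invokes the Poincar\'{e} inequality \eqref{po}. That route can only give boundedness when $\alpha<\lambda_1=N/2$. Yet Theorem \ref{t31} is stated for every $\alpha\in\mathbb{R}$, and the self-similar value $\alpha=\frac{2-s}{2q-4}$ exceeds $N/2$ for $q$ near $2$. Your normalization-by-contradiction argument uses only the compact embeddings and $q>2$, and it works for all real $\alpha$.
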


\begin{proof}
It is easy to check that $\left\{v_n\right\}$ is bounded in $X$. In fact,
$$
\begin{aligned}
&\beta+o(1)\left(1+\left\|\nabla v_n\right\|_{K, 2}\right) \geq E_K\left(v_n\right)-\frac{1}{2}\left\langle E_K^{\prime}\left(v_n\right), v_n\right\rangle \\
& \quad=\left(\frac{1}{2}-\frac{1}{q}\right) \int_{\mathbb{R}^N} \frac{1}{|y|^s}\left|v_n\right|^{q} K(y) d y.
\end{aligned}
$$
So
$$
\begin{aligned}
\left(1-\frac{N-2}{4 \lambda_1}\right) \int_{\mathbb{R}^N}\left|\nabla v_n\right|^2 K(y) d y & \leq \int_{\mathbb{R}^N}\left|\nabla v_n\right|^2 K(y) d y-\frac{N-2}{4} \int_{\mathbb{R}^N}\left|v_n\right|^2 K(y) d y \\
& =2 E_K\left(v_n\right)+\frac{2}{q} \int_{\mathbb{R}^N} \frac{1}{|y|^s}|v|^{q} K(y) d y \\
& \leq C_2+o(1)\left\|\nabla v_n\right\|_{K, 2},
\end{aligned}
$$
where $o(1) \rightarrow 0$ as $n \rightarrow \infty$. Thus it follows that $\left\{v_n\right\}$ is bounded in $X$. So we can extract a subsequence, still denoted by $\left\{v_n\right\}$, such that, as $n \rightarrow \infty$,

\begin{equation}
\begin{array}{ll}
v_n \rightharpoonup v & \text { weakly in } X, \\
v_n \rightarrow v & \text { in } L_K^2\left(\mathbb{R}^N\right), \\
v_n \rightarrow v & \text { in } L_K^q\left(\mathbb{R}^N, \frac{1}{|y|^s}\right), \\
v_n \rightarrow v & \text { a.e. on } \mathbb{R}^N .
\end{array}
\end{equation}
Observe that
\begin{align*}
\left\|\nabla v_n-\nabla v\right\|_{K,2}^2&=\left\langle E_K^{\prime}\left(v_n\right)-E_K^{\prime}(v), v_n-v\right\rangle_K\\
&+\int_{\mathbb{R}^N}\left[ |v_n|^{q-2}v_n-|v|^{q-2} v\right](v_n-v) \frac{1}{\left|y\right|^s}K(y)d y+\alpha\int_{\mathbb{R}^N} (v_n-v)^2 \frac{1}{\left|y\right|^s} K(y)d y.
\end{align*}
It is clear that
$$
\left\langle E_K^{\prime}\left(v_n\right)-E_K^{\prime}(v), v_n-v\right\rangle_K\rightarrow 0,
$$
and
$$\int_{\mathbb{R}^N} (v_n-v)^2 \frac{1}{\left|y\right|^s} K(y)d y \rightarrow 0.$$
It follows from the H\"{o}lder inequality  and continuity of the Nemytskii operator (see \cite[Theorem A.2]{cr4}) that
\begin{align*}
&\int_{\mathbb{R}^N}\left[\left|v_n\right|^{q-2} v_n-|v|^{q-2} v\right]\left(v_n-v\right) \frac{1}{|y|^s} K(y) d y\\
&\leq \left(\int_{\mathbb{R}^N}\left[\left|v_n\right|^{q-2} v_n-|v|^{q-2} v\right]^{\frac{q}{q-1}} \frac{1}{|y|^s} K(y) d y\right)^{\frac{q-1}{q}}
\left[\int_{\mathbb{R}^N}\left(v_n-v\right)^{q} \frac{1}{|y|^s} K(y) d y\right]^{\frac{1}{q}}\rightarrow 0.
\end{align*}
Thus we have proved that $\left\|\nabla v_n-\nabla v\right\|_K \rightarrow 0, n \rightarrow \infty$.

\end{proof}

\begin{proof}[Proof of Theorem \ref{t31}]
 Since on the finite-dimensional space $Y_k$ all norms are equivalent, relation (A3) is satisfied for every $\rho_k>0$ large enough.
Let us define
$$
\beta_k:=\sup _{\substack{v \in Z_k \\\|\nabla v\|_K=1}}\left[\int_{\mathbb{R}^N} \frac{1}{|y|^s}\left|v\right|^q K(y) d y\right]^{\frac{1}{q}},
$$
and
$$\gamma_k:=\sup _{\substack{v \in Z_k \\\|\nabla v\|_K=1}}\left[\int_{\mathbb{R}^N} \left|v\right|^2 K(y) d y\right]^{\frac{1}{2}}.
$$
We have $$\beta_k\rightarrow 0, \gamma_k\rightarrow 0.$$
In fact, we know that  $0<\beta_{k+1} \leq \beta_k$, so that $\beta_k \rightarrow \beta \geq 0, k \rightarrow \infty$.
For every $k \geq 0$, there exists $ v_k \in Z_k$ such that $\left\|\nabla v_k\right\|_{K}=1$ and $\left\|v_k\frac{1}{|y|^s}\right\|_{K,q}>\beta_k / 2$.
By definition of $Z_k, v_k \rightharpoonup 0$ in $X$.
Lemma \ref{l02} implies that $v_k \rightarrow 0$ in $L_K^q\left(\mathbb{R}^N, \frac{1}{|y|^s}\right)$. Thus we have proved that $\beta=0$.
Similarly, $\gamma_k\rightarrow 0.$

Then, on $Z_k$, for sufficiently large $k$, we have
\begin{align*}
E_{K}(v)&\geq \frac{1}{2} \int_{ \mathbb{R}^N }|\nabla v|^2 K(y)d y-\frac{\alpha}{2} \int_{ \mathbb{R}^N }| v|^2 K(y)d y-\frac{1}{q} \int_{\mathbb{R}^N }\frac{1}{|y|^s}|v|^{q} K(y)dy\\
&\geq \frac{1}{2} \|\nabla v\|^2_{K,2}-\frac{\alpha}{2}c_1\gamma_k^2 \|\nabla  v\|^2_{K,2}- c_2\beta_k^q \|\nabla v\|^q_{K,2}\\
&\geq \frac{1}{4} \|\nabla v\|^2_{K,2}- c_2\beta_k^q \|\nabla v\|^q_{K,2}.
\end{align*}
Choosing $r_k:=\left(2c_2 q \beta_k^q\right)^{1 /(2-q)}$, we obtain, if $v \in Z_k$ and $\|\nabla v\|_{K,2}=r_k$,
$$
E_{K}(v) \geq\frac{q-2}{4 q}\left(\frac{1}{2 q c_2 \beta_k^q}\right)^{\frac{2}{q-2}}.
$$
Since $\beta_k \rightarrow 0$, relation (A2) is proved.
\end{proof}

\section{Hardy critical singular case}
In this section, we prove the nonexistence and existence of solutions to problem (\ref{e02}) in the hardy critical singular case case.
We define
$$
Q_{\alpha}(v)=\frac{\int_{\mathbb{R}^N} |\nabla v|^2dy-\alpha \int_{\mathbb{R}^N}  v^2dy}
{\left(\int_{\mathbb{R}^N} |v|^{2^*(s)}\frac{1}{|y|^s}dy\right)^{2 / 2^*(s)}},
$$
$$
Q_{K,\alpha}(v)=\frac{\int_{\mathbb{R}^N} K(y)|\nabla v|^2dy-\alpha \int_{\mathbb{R}^N} K(y) v^2dy}
{\left(\int_{\mathbb{R}^N} K(y)|v|^{2^*(s)}\frac{1}{|y|^s}dy\right)^{2 / 2^*(s)}},
$$
and
\begin{equation}\label{ska}
S_{\alpha}=\inf _{v \in X \backslash\{0\}} Q_{\alpha}(v), \quad S_{K,\alpha}=\inf _{v \in X \backslash\{0\}} Q_{K,\alpha}(v).
\end{equation}
Our main results are as follows:
\begin{theorem}\label{t41}
Let $\alpha \leqslant N / 4 (N \geqslant 3)$. Suppose that $v\in X$ satisfies problem \eqref{e02}, then $v \equiv 0$.
\end{theorem}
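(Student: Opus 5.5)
The plan is to argue via a Pohozaev-type identity for the weighted operator $L$, as in Escobedo--Kavian for $s=0$, in the Hardy critical case $q=2^*(s)$ of this section. I will use the divergence form
\[
-\nabla\cdot(K\nabla v)=\alpha K v+K\frac{|v|^{q-2}v}{|y|^s}
\]
and test it against two multipliers.

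\textbf{Step 1 (the two identities).} Testing with $v$ itself, which is legitimate since $v\in X$, gives the energy identity
\[
\int K|\nabla v|^2=\alpha\int Kv^2+\int K|y|^{-s}|v|^q.
\]
Testing with $y\cdot\nabla v$ and integrating by parts uses $\nabla\cdot(Ky)=(N+\tfrac{|y|^2}{2})K$ and $\nabla\cdot(K|y|^{-s}y)=(N-s+\tfrac{|y|^2}{2})K|y|^{-s}$. This yields
\begin{align*}
\frac{2-N}{2}\int K|\nabla v|^2-\frac14\int |y|^2K|\nabla v|^2
={}&-\frac{\alpha N}{2}\int Kv^2-\frac{\alpha}{4}\int |y|^2Kv^2\\
&-\frac{N-s}{q}\int K|y|^{-s}|v|^q-\frac{1}{2q}\int |y|^{2-s}K|v|^q .
\end{align*}
Since $q=2^*(s)$, we have $(N-s)/q=(N-2)/2$. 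So adding $\frac{N-2}{2}$ times the energy identity cancels the critical terms, leaving
\[
\frac14\int |y|^2K|\nabla v|^2=\alpha\int Kv^2+\frac{\alpha}{4}\int |y|^2Kv^2+\frac{1}{2q}\int |y|^{2-s}K|v|^q .
\]

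\textbf{Step 2 (a weighted inequality in the opposite direction).} I want a lower bound for $\int|y|^2K|\nabla v|^2$ showing the left side is at most the first two terms on the right when $\alpha\le N/4$. Then the strictly positive last term forces $v\equiv0$; for $\alpha\le 0$ the sign is handled separately, using the energy identity and the Poincar\'e inequality \eqref{po}.

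The natural tool is completing a square. Expanding $\int K|y|^2|\nabla v+\frac{y}{4}v|^2\ge0$ and integrating the cross term by parts gives
\[
\int |y|^2K|\nabla v|^2\ge \frac{N+2}{4}\int |y|^2Kv^2+\frac1{16}\int|y|^4Kv^2 .
\]
This bound points the wrong way for direct use: it bounds the left side from below, whereas the contradiction needs an inequality bounding $\frac14\int|y|^2K|\nabla v|^2$ in terms of $\int Kv^2$ and $\int|y|^2Kv^2$.

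So I would instead recombine the two identities with a free parameter. Concretely, I would test with $(y\cdot\nabla v)+\mu v$ and use the substitution $w=K^{1/2}v$. For this substitution,
\[
\int K|\nabla v|^2=\int|\nabla w|^2+\int\Big(\frac{|y|^2}{16}-\frac N4\Big)w^2 .
\]
This converts the problem into an unweighted Pohozaev identity for $w$, namely
\[
-\Delta w+\frac{|y|^2}{16}w=\Big(\alpha-\frac N4\Big)w+K^{1-q/2}|y|^{-s}|w|^{q-2}w .
\]
In that form the condition $\alpha\le N/4$ makes the linear coefficient $\alpha-\frac N4\le0$. The Pohozaev identity for $w$ then has every term of one sign, namely the harmonic-potential term, the linear term, and the extra term produced by the decaying factor $K^{1-q/2}$. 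Hence $w=0$.

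\textbf{Main obstacle.} I expect the hard step to be the rigorous justification of the multiplier $y\cdot\nabla v$. This needs enough regularity and decay of a weak solution $v\in X$ for the boundary terms on $\partial B_R$ and near the singularity at $0$ to vanish. I would handle it with local elliptic regularity away from the origin, a cutoff $\varphi(|y|/R)$ together with $\varepsilon<|y|$ truncation, and the integrability supplied by Lemma \ref{l01} and Lemma \ref{l02}. These guarantee that $|y|^{-s}|v|^q K$ and $|\nabla v|^2K$ are integrable, so the truncation error terms tend to zero along suitable sequences $R_j\to\infty$ and $\varepsilon_j\to0$.
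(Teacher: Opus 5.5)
\textbf{Comparison with the paper.} Your final route is correct, but it is not the paper's.

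The paper tests the non-divergence form of \eqref{e02} against $v$ and $y\cdot\nabla v$ in plain Lebesgue measure. Equivalently, it tests the divergence form against $K^{-1}v$ and $K^{-1}y\cdot\nabla v$. This gives
\[
\frac12\int_{\mathbb{R}^N}(y\cdot\nabla v)^2\,dy=\Big(\frac{N(N-2)}{8}+\alpha\Big)\int_{\mathbb{R}^N}v^2\,dy,
\]
and the paper closes with the strict Hardy inequality $\int v^2<\frac{4}{N^2}\int(y\cdot\nabla v)^2$ of Lemma~\ref{l000} with $a=0$.

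You instead pass to $w=K^{1/2}v$ and apply Pohozaev to
\[
-\Delta w+\tfrac{|y|^2}{16}w=(\alpha-\tfrac N4)w+g|w|^{q-2}w,\qquad g=e^{-(q-2)|y|^2/8}|y|^{-s}.
\]
Add $\frac{N-2}{2}$ times the energy identity. Criticality $(N-s)/q=(N-2)/2$ cancels the $\int g|w|^q$ terms, including the $-s$ coming from $y\cdot\nabla g$. The result is
\[
\frac18\int|y|^2w^2+\Big(\frac N4-\alpha\Big)\int w^2+\frac{q-2}{4q}\int e^{-\frac{(q-2)|y|^2}{8}}|y|^{2-s}|w|^q=0 .
\]
So every term is indeed nonnegative for $\alpha\le N/4$, which forces $w=0$.

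What your route buys:
\begin{itemize}
\item No Hardy inequality is needed.
\item No separate treatment of $\alpha\le 0$ is needed.
\item You get a quantitative bound for nontrivial solutions: $(\alpha-\frac N4)\int Kv^2>\frac18\int|y|^2Kv^2$.
\end{itemize}

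What it costs is integrability. The paper's identity involves only quantities that are finite for every $v\in X$, since $K\ge1$ and $|y|^2\le\frac4e K$. Yours involves $\int|y|^{2-s}K|v|^q$, which is not a priori finite. This is harmless: in the identity truncated by $\varphi(y/R)$ that term has a definite sign, and the other terms are controlled by $w\in Y$ (Lemma~\ref{l42}) and Lemma~\ref{l02}. Your plan for the cutoffs at infinity and near the origin is fine, and in fact more careful than the paper's, which ignores the origin.

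\textbf{Two points to fix in the write-up.}

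First, the conjugation identity has a sign slip. The correct identity is
\[
\int K|\nabla v|^2=\int|\nabla w|^2+\int\Big(\frac{|y|^2}{16}+\frac N4\Big)w^2,
\]
as in the proof of Lemma~\ref{l44}, and consistent with $\lambda_1=N/2$. It is this $+\frac N4$ that produces the coefficient $\alpha-\frac N4$ in your $w$-equation, which you state correctly. With $-\frac N4$ you would get $\alpha+\frac N4$, and the sign argument would fail.

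Second, the $w$-computation is not a recombination of your two Step~1 identities with a constant parameter $\mu$. In the $v$-variables it corresponds to testing $-\nabla\cdot(K\nabla v)=\cdots$ with $y\cdot\nabla v+\big(\frac{|y|^2}{4}+\frac{N-2}{2}\big)v$. Eliminating $\int|y|^2K|\nabla v|^2$, which is not even known to be finite for $v\in X$, requires the additional test function $|y|^2v$. So drop Step~1 and the first half of Step~2, and do the Pohozaev computation directly on $w$.
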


\begin{theorem}\label{t42}
\begin{description}
  \item[(1)] Suppose $N \geqslant 4$. Then for any $\left.\alpha \in\right(N / 4, N / 2)$ there is at least one
  nontrivial solution of \eqref{e02}, and if $\alpha\not\in ( N / 4, N / 2)$
   there is no positive solution of \eqref{e02};
  \item[(2)] If $N=3$, for any $\alpha  \in( 1,3 / 2)$ there is at least one nontrivial solution of \eqref{e02}.
  For $\alpha  \leqslant 3 / 4$ or $\alpha \geqslant 3 / 2$ equation \eqref{e02} has no positive solution.
\end{description}

\end{theorem}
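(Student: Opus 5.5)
This is the Brezis--Nirenberg strategy in the weighted setting of Escobedo--Kavian: existence via a minimizer of $S_{K,\alpha}$ at a level below the Hardy--Sobolev constant, and nonexistence of positive solutions via testing against the first eigenfunction and a Pohozaev identity. Let $\mu_s$ denote the best constant of the unweighted Hardy--Sobolev inequality
$$\mu_s\Big(\int_{\mathbb{R}^N}|y|^{-s}|w|^{2^*(s)}dy\Big)^{2/2^*(s)}\le \int_{\mathbb{R}^N}|\nabla w|^2dy,$$
attained by $U(y)=(1+|y|^{2-s})^{-(N-2)/(2-s)}$ up to dilations.

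\textbf{Existence.} The argument has three steps.
\begin{enumerate}
\item \emph{Reduction to a level estimate.} Since $\alpha<\lambda_1=N/2$, the Poincar\'e inequality \eqref{po} makes the numerator of $Q_{K,\alpha}$ coercive. I first show that $S_{K,\alpha}<\mu_s$ implies $S_{K,\alpha}$ is attained. Take a normalized minimizing sequence $v_n$; it is bounded in $X$. Pass to a weak limit $v$; by Lemma \ref{l00} we get $v_n\to v$ in $L^2_K$. Applying Lemma \ref{b2} to the critical term and the Hilbert-space splitting to $\|\nabla v_n\|_{K,2}^2$ gives, in the standard way,
$$S_{K,\alpha}\ \ge\ S_{K,\alpha}\|v\|^2+\mu_s\,(1-\|v\|^{2^*(s)})^{2/2^*(s)}.$$
Here one uses that $K\ge1$, so the weighted Dirichlet integral of the concentrating part dominates $\mu_s$ times its weighted critical norm asymptotically, because $K\to1$ near concentration points. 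Together with $S_{K,\alpha}<\mu_s$, this forces $\|v\|=1$, so $v$ is a minimizer. Replacing $v$ by $|v|$ and rescaling $tv$ (Lagrange multiplier) gives a nonnegative nontrivial solution.
\item \emph{Test functions.} To prove $S_{K,\alpha}<\mu_s$, I reduce to the unweighted quotient: put $w=K^{1/2}v$. Integrating by parts, exactly as in \eqref{s5}--\eqref{s4} with $s=0$, yields
$$\int K|\nabla v|^2=\int|\nabla w|^2+\int\Big(\frac{|y|^2}{16}-\frac N4\Big)w^2 .$$
Hence $Q_{K,\alpha}(v)$ equals the quotient for the operator $-\Delta+\frac{|y|^2}{16}-\frac N4-\alpha$ with unweighted critical term. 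I then insert $w_\varepsilon=\varphi(y)\,\varepsilon^{-(N-2)/2}U(y/\varepsilon)$ with a cutoff $\varphi$ and expand as $\varepsilon\to0$.
\item \emph{Expansion.} Writing $\beta=\alpha+N/4$, the expansion reads
$$Q=\mu_s-\beta\,\|w_\varepsilon\|_2^2\,/\,\|w_\varepsilon\|_{2^*(s)}^2+O(\varepsilon^{N-2})+O(\varepsilon^2).$$
For $N\ge5$ the middle term is of order $\varepsilon^2$; for $N=4$ it is of order $\varepsilon^2|\log\varepsilon|$. The $O(\varepsilon^2)$ term comes from $|y|^2w_\varepsilon^2$ and carries a positive coefficient.
\end{enumerate}

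The main obstacle is step 3: the harmonic potential also contributes at order $\varepsilon^2$. For $N\ge5$, I would try to show the precise coefficient comparison gives $Q<\mu_s$ exactly when $\alpha>N/4$; if the $|y|^2$ contribution spoils this, I would instead optimize $\varphi$ and use a one-dimensional reduction to radial functions, as Escobedo--Kavian do. For $N=4$ the logarithm dominates, so any $\alpha>N/4$ works. For $N=3$ the $\varepsilon^{N-2}$ error is of the same order as the gain, so the cutoff $\varphi$ must be chosen as the ground state of the radial problem on a ball; this is the analogue of the Brezis--Nirenberg $\lambda>\lambda_1/4$ condition and explains the restriction $\alpha>1$.

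\textbf{Nonexistence.} Two mechanisms cover the excluded ranges.
\begin{itemize}
\item \emph{$\alpha\ge N/2$.} For a positive solution $v$, test \eqref{e02} against the first eigenfunction $\phi_1=e^{-|y|^2/4}$ of $L$, which is positive with $L\phi_1=\lambda_1\phi_1$. This gives
$$(\lambda_1-\alpha)\int v\phi_1K=\int|y|^{-s}v^{2^*(s)-1}\phi_1K>0,$$
which is impossible when $\alpha\ge\lambda_1$.
\item \emph{$\alpha\le N/4$, and Theorem \ref{t41}.} I would derive a Pohozaev identity by multiplying the equation in the $w$-form $-\Delta w+(\frac{|y|^2}{16}-\frac N4-\alpha)w=K^{(2-2^*(s))/2}|y|^{-s}|w|^{2^*(s)-2}w$ by $y\cdot\nabla w$ and by $w$. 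The critical scaling makes the nonlinear terms cancel up to a term carrying the factor $-\frac{(2^*(s)-2)}{4}|y|^2$ from $\nabla K$, which has a favorable sign. What remains is
$$\int\Big(\frac{|y|^2}{8}\Big)w^2+\int\Big(\alpha-\frac N4\Big)w^2+(\text{nonpositive})=0,$$
forcing $w\equiv0$ when $\alpha\le N/4$.
\end{itemize}
Justifying the boundary terms requires the decay of $v\in X$, which I would handle by the usual truncation argument.
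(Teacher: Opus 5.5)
\textbf{There is a genuine gap in the level estimate $S_{K,\alpha}<\mu_s$. It starts with a sign error in your Step 2.} With $w=K^{1/2}v$ one has $K|\nabla v|^2=|\nabla w-\frac y4 w|^2$ and $-\frac12\int w\,y\cdot\nabla w=+\frac N4\int w^2$. Hence
\[
\int K|\nabla v|^2=\int|\nabla w|^2+\int\Big(\frac{|y|^2}{16}+\frac N4\Big)w^2 .
\]
With your sign, $w=e^{-|y|^2/8}$ would give $\int K|\nabla v|^2=0$, contradicting \eqref{po}.

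So the gain in Step 3 is $(\alpha-\frac N4)\|w_\varepsilon\|_2^2$. This is exactly what the paper's identity $A_2=\frac N4A_4$ in Lemma \ref{l45} encodes, and it is where the threshold $N/4$ comes from.

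Your expansion with $\beta=\alpha+\frac N4$, taken at face value (e.g.\ for $N=4$, where the logarithm dominates), would give $S_{K,\alpha}<\mu_s$, hence a solution, for every $\alpha>-\frac N4$. That contradicts Theorem \ref{t41}. The positive $\varepsilon^2$-term you attribute to $|y|^2w_\varepsilon^2$ does not exist: $\int|y|^2w_\varepsilon^2$ is $O(\varepsilon^4)$ for $N\ge7$, $O(\varepsilon^4|\log\varepsilon|)$ for $N=6$, and $O(\varepsilon^3)$ for $N=5$.

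The same slip breaks your Pohozaev step. ``Positive $+(\alpha-\frac N4)\int w^2+$ nonpositive $=0$'' forces nothing. Done correctly, testing $-\Delta w+(\frac{|y|^2}{16}+\frac N4-\alpha)w=K^{1-\frac{2^*(s)}2}|y|^{-s}|w|^{2^*(s)-2}w$ with $w$ and with $y\cdot\nabla w$ gives
\[
\frac18\int|y|^2w^2+\Big(\frac N4-\alpha\Big)\int w^2+\frac{2-s}{4(N-s)}\int K^{1-\frac{2^*(s)}2}|y|^{2-s}|w|^{2^*(s)}=0 .
\]
All terms are nonnegative when $\alpha\le N/4$, so this is a valid alternative to the paper's route (multipliers $v$ and $y\cdot\nabla v$ with Lebesgue measure, then the Hardy inequality of Lemma \ref{l000}). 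Your $\alpha\ge N/2$ argument with $\phi_1=e^{-|y|^2/4}$ (note $\phi_1K\equiv1$) is correct, and it supplies a step the paper does not spell out.

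\textbf{Second, you drop the weight on the critical term.} In fact $\int K|y|^{-s}|v|^{2^*(s)}=\int K^{1-\frac{2^*(s)}2}|y|^{-s}|w|^{2^*(s)}$. In Step 1 this weight helps: $K^{1-2^*(s)/2}\le1$ together with the corrected identity gives $S_{K,0}\ge\mu_s$. That is the paper's Lemma \ref{l44}, and it is the clean replacement for your ``$K\to1$ near concentration points''; your Step 1 is then essentially Lemmas \ref{l41} and \ref{l44}.

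In Step 3, however, this weight is the term that actually competes with the gain when $N\ge5$. We have $K^{1-\frac{2^*(s)}2}=1-\frac{2-s}{4(N-2)}|y|^2+O(|y|^4)$. For an extremal $U$ and $w_\varepsilon=\varphi\,\varepsilon^{-\frac{N-2}2}U(\cdot/\varepsilon)$ this gives
\[
Q_{K,\alpha}=\mu_s+\varepsilon^2\Big[\frac{(\frac N4-\alpha)\int U^2}{\big(\int|y|^{-s}U^{2^*(s)}\big)^{2/2^*(s)}}+\mu_s\,\frac{2-s}{4(N-s)}\,\frac{\int|y|^{2-s}U^{2^*(s)}}{\int|y|^{-s}U^{2^*(s)}}\Big]+o(\varepsilon^2).
\]
Test $-\Delta U=|y|^{-s}U^{2^*(s)-1}$ against $|y|^2U$ and against $|y|^2\,y\cdot\nabla U$. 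The resulting identities show that the bracket is negative only for $\alpha>\frac N4+\frac{(2-s)N(N-4)}{8(2N-2-s)}$. Changing the cutoff, or multiplying the bubble by $e^{b|y|^2}$, does not alter this coefficient.

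The paper's Lemma \ref{l45} does not get around this either. It records the denominator as $\varepsilon^{\frac{s-N}{2-s}}(A_0+O(\varepsilon))$, which hides exactly this term at the same order $\varepsilon^{\frac2{2-s}}$ as $A_2-\alpha A_4$.

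Once the sign is fixed, your scheme proves $S_{K,\alpha}<\mu_s$ on the whole range only in two cases:
\begin{itemize}
\item $N=4$: the gain is of order $\varepsilon^2|\log\varepsilon|$, while the weight and $|y|^2w_\varepsilon^2$ contributions are only $O(\varepsilon^2)$.
\item $N=3$: use the Gaussian $e^{-|y|^2/8}$ (the ground state of $-\Delta+\frac{|y|^2}{16}$) as cutoff, as in Lemma \ref{l46}; this is the natural substitute for your ball eigenfunction.
\end{itemize}
For $N\ge5$ and $\alpha$ close to $N/4$, no single-bubble level estimate can reach the claimed range, so a further idea is needed.
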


\begin{proof}[Proof of Theorem \ref{t41}]
 As is customary, the proof is based on an appropriate Pohozaev identity.

 Firstly, we  choose $\varphi \in C_c^1\left(\mathbb{R}^{N}\right)$, such that $0\leq \varphi(y) \leq 1$ and
\begin{equation}
\varphi(y)=\begin{cases}
1 & \text { if } |y| \leqslant 1, \\
0 & \text { if } |y| \geqslant 2.
\end{cases}
\end{equation}
Define for $n \geqslant 1, \varphi_n(y):=\varphi(y / n)$.
Multiplying \eqref{e02} by $v \varphi_n$ and integrating by parts we get
$$
\int_{\mathbb{R}^N}|\nabla v|^2 \varphi_ndy +\int_{\mathbb{R}^N} \nabla v \cdot \nabla \varphi_n \cdot vdy
-\frac{1}{4} \int_{\mathbb{R}^N} \nabla v^2 \cdot y \varphi_n dy
=\int_{\mathbb{R}^N}\left(\frac{|v|^{2^{\ast}(s)}}{|y|^s}+\alpha|v|^2\right)  \varphi_ndy.
$$
Letting $n \rightarrow \infty$,
noting that the fact that $$y \cdot \nabla \varphi_n=(y / n) \cdot \nabla \varphi(y / n),|y||\nabla \varphi(y)| \leqslant C$$
and $$(y / n) \cdot \nabla \varphi(y / n) \rightarrow 0$$
and by  Gauss divergence theorem,  it is easy to see that

$$
\begin{aligned}
&\int_{\mathbb{R}^N}|\nabla v|^2 \varphi_ndy\rightarrow \int_{\mathbb{R}^N}|\nabla v|^2 dy,\\
&\int_{\mathbb{R}^N}\left(|v|^{2^*(s)}+\alpha|v|^2\right) \cdot \varphi_n d y\rightarrow \int_{\mathbb{R}^N}\left(\frac{|v|^{2^*(s)}}{|y|^s}+\alpha|v|^2\right) d y,\\
&\int_{\mathbb{R}^N} \nabla v \cdot \nabla \varphi_n \cdot v d y\rightarrow 0,\\
&-\frac{1}{4} \int_{\mathbb{R}^N} \nabla v^2 \cdot y \varphi_n d y
=\frac{N}{4} \int_{\mathbb{R}^N} v^2 \varphi_ndy+\int_{\mathbb{R}^N} v^2 y \cdot \nabla \varphi_ndy \rightarrow \frac{N}{4} \int_{\mathbb{R}^N} v^2 dy.
\end{aligned}
$$
Now  we obtain
\begin{equation}\label{sk11}
 \int_{\mathbb{R}^N}|\nabla v|^2  d y+\frac{N}{4} \int_{\mathbb{R}^N} v^2 d y=\int_{\mathbb{R}^N}\left(\frac{|v|^{2^*(s)}}{|y|^s}+\alpha|v|^2\right) d y.
\end{equation}
 Next multiplying \eqref{e02} by $y_k \partial_k v \cdot \varphi_n$ (for $1 \leq k \leq N$ fixed) and integrating by parts one gets
\begin{equation}\label{sk8}
\begin{aligned}
& \int_{\mathbb{R}^N}\left(\nabla v \cdot \nabla \partial_k v\right) y_k \varphi_ndy+\int_{\mathbb{R}^N}\left|\partial_k v\right|^2 \varphi_n\\
&\quad+\int_{\mathbb{R}^N} \nabla v \cdot \nabla \varphi_n\left(\partial_k v\right) y_kdy
-\frac{1}{2} \int_{\mathbb{R}^N}(y \cdot \nabla v)\left(\partial_k v \cdot y_k\right) \varphi_ndy\\
&=\int_{\mathbb{R}^N} \left( \partial_k\left(\frac{|v|^{2^{\ast}(s)}}{2^{\ast}(s)|y|^s}+\frac{\alpha}{2}|v|^2\right)y_k \varphi_n+\frac{s|v|^{2^{\ast}(s)}}{2^{\ast}(s)}|y|^{-s-2}y_k^2\varphi_n\right)dy.
\end{aligned}
\end{equation}
Noting that
\begin{equation}\label{sk9}
\begin{aligned}
\int_{\mathbb{R}^N}\left(\nabla v \cdot \nabla \partial_k v\right) y_k \varphi_ndy
&=\frac{1}{2} \int_{\mathbb{R}^N} \partial_k|\nabla v|^2 \cdot y_k \varphi_ndy\\
&=-\frac{1}{2} \int_{\mathbb{R}^N}|\nabla v|^2 \varphi_ndy-\frac{1}{2} \int_{\mathbb{R}^N}|\nabla v|^2 y_k \cdot \partial_k \varphi_ndy.
\end{aligned}
\end{equation}
and the first term on the RHS of (\ref{sk8})
\begin{equation}\label{sk10}
\begin{aligned}
\int_{\mathbb{R}^N}\partial_k\left(\frac{|v|^{2^*}(s)}{2^*(s)|y|^s}+\frac{\alpha}{2}|v|^2\right) y_k \varphi_ndy
&=-\int_{\mathbb{R}^N}\left(\frac{|v|^{2^*}(s)}{2^*(s)|y|^s}+\frac{\alpha}{2}|v|^2\right)  \varphi_ndy\\
&-\int_{\mathbb{R}^N}\left(\frac{|v|^{2^*(s)}}{2^*(s)|y|^s}+\frac{\alpha}{2}|v|^2\right) y_k \partial_k\varphi_ndy.
\end{aligned}
\end{equation}
Letting $n\rightarrow \infty$ in \eqref{sk8},  using \eqref{sk9} and \eqref{sk10},  we  obtain

\begin{equation*}
\begin{aligned}
&-\frac{1}{2} \int_{\mathbb{R}^N}|\nabla v|^2  d y+\int_{\mathbb{R}^N}\left|\partial_k v\right|^2 dy
-\frac{1}{2} \int_{\mathbb{R}^N}(y \cdot \nabla v)\left(\partial_k v \cdot y_k\right)  d y\\
&=-\int_{\mathbb{R}^N}\left(\frac{|v|^{2^*}(s)}{2^*(s)|y|^s}+\frac{\alpha}{2}|v|^2\right)  d y
+\int_{\mathbb{R}^N}\frac{s|v|^{2^*(s)}}{2^*(s)}|y|^{-s-2} y_k^2  d y .
\end{aligned}
\end{equation*}
Summing the above identity from 1 to $N$, we obtain

\begin{equation}\label{sk12}
\begin{aligned}
& \frac{N-2}{2} \int_{\mathbb{R}^N}|\nabla v|^2 d y
+\frac{1}{2} \int_{\mathbb{R}^N}(y \cdot \nabla v)^2 d y \\
& =(N-s)\int_{\mathbb{R}^N}\left(\frac{|v|^2}{2^*(s)|y|^s}\right) d y
+N\int_{\mathbb{R}^N}\frac{\alpha}{2}|v|^2 d y.
\end{aligned}
\end{equation}
Combining (\ref{sk11}), (\ref{sk12}), we infer
\begin{equation}\label{sk13}
\frac{1}{2} \int_{\mathbb{R}^N}(y \cdot \nabla v)^2 d y
=\left(\frac{N(N-2)}{8}+\alpha\right)\int_{\mathbb{R}^N} |v|^2 d y.
\end{equation}
Then  Lemma \ref{l000} and \eqref{sk13} implies that
$$
\frac{N^2}{8} \int_{\mathbb{R}^N}|v|^2 d y < \left(\frac{N(N-2)}{8}+\alpha\right) \int_{\mathbb{R}^N}|v|^2 d y .
$$
So, $\alpha>\frac{N}{4}$.
\end{proof}

\begin{lemma}\label{l41}
If $0<S_{K,\alpha}<S_{K,0}$, the infimum in (\ref{ska}) is achieved (and this yields obviously a solution to (\ref{e02}).
\end{lemma}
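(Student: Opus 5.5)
We follow the Brezis--Nirenberg argument, adapted to the weighted space $X$ and using the Brezis--Lieb splittings of Lemma \ref{bl} and Lemma \ref{b2}. Take a minimizing sequence $\{v_n\}\subset X$ for $S_{K,\alpha}$. By homogeneity we normalize
\[
\int_{\mathbb{R}^N}\frac{|v_n|^{2^*(s)}}{|y|^s}K\,dy=1,
\qquad
\int_{\mathbb{R}^N}\bigl(|\nabla v_n|^2-\alpha v_n^2\bigr)K\,dy\to S_{K,\alpha}.
\]

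\emph{Boundedness.} Since $v_n\in X$, the Poincar\'e inequality \eqref{po} holds. When $\alpha<\lambda_1=N/2$, it gives
\[
\|\nabla v_n\|_{K,2}^2\le \bigl(1-\alpha^+/\lambda_1\bigr)^{-1}\bigl(S_{K,\alpha}+o(1)\bigr).
\]
In general, if $\|\nabla v_n\|_{K,2}\to\infty$, then the normalized sequence $w_n=v_n/\|\nabla v_n\|_{K,2}$ satisfies $\alpha\|w_n\|_{K,2}^2\to1$. By Lemma \ref{l00} we may pass to a strong $L^2_K$-limit $w\neq0$. Weak lower semicontinuity then gives $\int(|\nabla w|^2-\alpha w^2)K\le0$, and this is incompatible with $S_{K,\alpha}>0$ once Lemma \ref{l02} is used to control the denominator. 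So $\{v_n\}$ is bounded.

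\emph{Extracting the limit.} Pass to a subsequence with $v_n\rightharpoonup v$ in $X$, $v_n\to v$ in $L^2_K$ (Lemma \ref{l00}), and $v_n\to v$ a.e. Set $w_n=v_n-v$. In the Hilbert space $X$ we have
\[
\|\nabla v_n\|_{K,2}^2=\|\nabla v\|_{K,2}^2+\|\nabla w_n\|_{K,2}^2+o(1),
\]
and the $L^2_K$ terms converge. By Lemma \ref{b2},
\[
1=\int\frac{|v|^{2^*(s)}}{|y|^s}K+\int\frac{|w_n|^{2^*(s)}}{|y|^s}K+o(1).
\]
Write $a=\int |v|^{2^*(s)}|y|^{-s}K\in[0,1]$, so that $\int|w_n|^{2^*(s)}|y|^{-s}K\to1-a$.

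Applying the definitions of $S_{K,\alpha}$ and $S_{K,0}$ to $v$ and to $w_n$ respectively, we get
\[
S_{K,\alpha}+o(1)=\int(|\nabla v|^2-\alpha v^2)K+\|\nabla w_n\|_{K,2}^2+o(1)\ge S_{K,\alpha}\,a^{2/2^*(s)}+S_{K,0}(1-a)^{2/2^*(s)}+o(1).
\]
Since $2/2^*(s)<1$, we have $a^{2/2^*(s)}+(1-a)^{2/2^*(s)}\ge1$. Hence
\[
S_{K,\alpha}\ge S_{K,\alpha}+(S_{K,0}-S_{K,\alpha})(1-a)^{2/2^*(s)}.
\]
Because $S_{K,\alpha}<S_{K,0}$, this forces $a=1$. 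Then $w_n\to0$ in $L^{2^*(s)}_K(|y|^{-s})$, $v$ satisfies the constraint, and lower semicontinuity gives $Q_{K,\alpha}(v)\le S_{K,\alpha}$. So $v$ is a minimizer.

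\emph{Producing a solution.} Replacing $v$ by $|v|$ if desired, the Lagrange multiplier rule gives
\[
Lv-\alpha v=S_{K,\alpha}\,|v|^{2^*(s)-2}v\,|y|^{-s}.
\]
Scaling $v\mapsto S_{K,\alpha}^{1/(2^*(s)-2)}v$ then yields a weak solution of \eqref{e02} with $q=2^*(s)$.

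The main obstacle is the case $a=0$ in the inequality above, where we must be sure that all the mass does not escape. The argument works as written only if the Brezis--Lieb splitting and the bound $\|\nabla w_n\|^2\ge S_{K,0}\bigl(\int|w_n|^{2^*(s)}|y|^{-s}K\bigr)^{2/2^*(s)}$ are valid. Both hold here: the second is just the definition of $S_{K,0}$, and the first uses the weighted Brezis--Lieb Lemma \ref{b2}. The other point needing care is that the $\alpha$-term is weakly continuous, which follows from the compact embedding of Lemma \ref{l00}.
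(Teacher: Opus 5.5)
Your proof is correct and follows the same Brezis--Nirenberg scheme as the paper. Both arguments use the Hilbert-space splitting of the gradient term, the weighted Brezis--Lieb Lemma~\ref{b2}, the bound $\|\nabla w_n\|_{K,2}^2\ge S_{K,0}(\int|w_n|^{2^*(s)}|y|^{-s}K\,dy)^{2/2^*(s)}$, and concavity of $t\mapsto t^{2/2^*(s)}$ together with $S_{K,\alpha}<S_{K,0}$.

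The differences are only in bookkeeping. You rule out $v=0$ inside the same inequality by forcing $a=1$; the paper instead first proves $\alpha\int|v|^2K\,dy\ge S_{K,0}-S_{K,\alpha}>0$ and then reads off $Q_{K,\alpha}(v)\le S_{K,\alpha}$ from the term $(1-S_{K,\alpha}/S_{K,0})\|\nabla w_n\|_{K,2}^2$. You also prove that the minimizing sequence is bounded, which the paper assumes without comment.
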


\begin{proof}
 Let $\left\{v_n\right\}$  a sequence in $X$ such that
\begin{equation}\label{sk1}
\begin{aligned}
\int_{\mathbb{R}^N} K(y)|v_n|^{2^*(s)} \frac{1}{|y|^s} d y & =1,  \\
\int_{\mathbb{R}^N}\left|\nabla v_n\right|^2 K(y)dy-\alpha \int_{\mathbb{R}^N}\left|v_n\right|^2 K(y)dy & =S_{K,\alpha}+o(1).
\end{aligned}
\end{equation}
From Lemma \ref{l00}, one can suppose that
\begin{equation}\label{sk2}
  \begin{array}{ll}
v_n \rightharpoonup  v & \text { weakly in } X, \\
v_n \rightarrow v & \text { in } L_K^2(\mathbb{R}^N), \\
v_n \rightarrow v & \text { a.e. on } \mathbb{R}^N,
\end{array}
\end{equation}
and
\begin{equation}\label{sk5}
  \int_{\mathbb{R}^N} K(y)\left|v\right|^{2^*(s)} \frac{1}{|y|^s} d y\leq 1.
\end{equation}
Now set $w_n:=v_n-v,$ we have
\begin{equation}\label{sk6}
\begin{array}{ll}
w_n \rightharpoonup 0 & \text { weakly in } X, \\
w_n \rightarrow 0 & \text { a.e. on } \mathbb{R}^N .
\end{array}
\end{equation}
By the definition of $S_{K,0}$, we obtain
$$
\int_{\mathbb{R}^N}\left|\nabla v_n\right|^2 K(y)dy \geqslant S_{K,0},
$$
and by (\ref{sk1}) and the fact that $v_n \rightarrow v$ in $L_K^2(\mathbb{R}^N)$
$$
\alpha \int_{\mathbb{R}^N}|v|^2 K(y)dy  \geqslant S_{K,0}-S_{K,\alpha}>0
$$
and therefore $v \neq 0$.

Using  (\ref{sk1}) and (\ref{sk2}),  one has

\begin{equation}\label{sk3}
\int_{\mathbb{R}^N} |\nabla v|^2 K(y)dy+\int_{\mathbb{R}^N}\left|\nabla w_n\right|^2 K(y)dy-\alpha \int_{\mathbb{R}^N}|v|^2 K(y)dy =S_{K,\alpha}+o(1).
\end{equation}
By Brezis-Lieb lemma (see Lemma \ref{b2})
\begin{equation}\label{sk4}
  1=\int_{\mathbb{R}^N}\left|v+w_n\right|^{2^{\ast}(s)} K(y)\frac{1}{|y|^s} dy=\int|v|^{2^{\ast}(s)} K(y)\frac{1}{|y|^s} dy+\int_{\mathbb{R}^N}\left|w_n\right|^{2^{\ast}(s)} K(y)\frac{1}{|y|^s} dy+o(1).
\end{equation}
Since $2 / 2^{\ast}(s)<1$, by \eqref{sk5}, \eqref{sk6}, \eqref{sk4} and the definition of $S_{K,0}$ we deduce that
$$
\begin{aligned}
& 1 \leqslant\left(\int_{\mathbb{R}^N}|v|^{2^{\ast}(s)} K(y)\frac{1}{|y|^s} dy\right)^{2 / 2^{\ast}(s)}+\left(\int_{\mathbb{R}^N}\left|w_n\right|^{2^{\ast}(s)} K(y)\frac{1}{|y|^s} dy\right)^{2 / 2^{\ast}(s)}+o(1), \\
& 1 \leqslant\left(\int_{\mathbb{R}^N}|v|^{2^{\ast}(s)} K(y)\frac{1}{|y|^s} dy\right)^{2 / 2^{\ast}(s)}+\frac{1}{S_{K,0}} \int_{\mathbb{R}^N}\left|\nabla w_n\right|^2 K(y) dy+o(1).
\end{aligned}
$$
Then  $S_{K,\alpha}>0$ implies that
\begin{equation}\label{skkk}
S_{K,\alpha} \leqslant S_{K,\alpha}\left(\int_{\mathbb{R}^N}|v|^{2^*(s)} K(y) \frac{1}{|y|^s} d y\right)^{2 / 2^*(s)}+\frac{S_{K,\alpha}}{S_{K, 0}} \int_{\mathbb{R}^N}\left|\nabla w_n\right|^2 K(y)  d y+o(1).
\end{equation}
Now from \eqref{sk3} and \eqref{skkk} we obtain
\begin{equation*}
\begin{aligned}
&\int_{\mathbb{R}^N}|\nabla v|^2 K(y) d y-\alpha \int_{\mathbb{R}^N}|v|^2 K(y) d y+\left(1-\frac{S_{K, \alpha}}{S_{K, 0}} \right)\int_{\mathbb{R}^N}\left|\nabla w_n\right|^2 K(y) d y\\
&\quad \leq S_{K, \alpha}\left(\int_{\mathbb{R}^N}|v|^{2^*(s)} K(y) \frac{1}{|y|^s} d y\right)^{2 / 2^*(s)}+o(1).
\end{aligned}
\end{equation*}
By the assumption $0<S_{K,\alpha}<S_{K,0}$, we conclude that
$$
Q_{K,\alpha }(v) \leqslant S_{K,\alpha}.
$$
Actually, we can obtain that
$$
\int_{\mathbb{R}^N}\left|\nabla w_n\right|^2 K(y) d y\rightarrow 0.
$$

\end{proof}

We next prove that there exists a range of the parameter $\alpha$-depending on the dimension $N$ such that
$$
S_{K, \alpha} \in\left(0, S_{K,0}\right).
$$
To this end, we have following lemmas:
\begin{lemma}(\cite[Lemma  4.16]{cr2})\label{l42}
 Let $N \geqslant 1$ and
$$
Y:=\left\{v \in H^1\left(\mathbb{R}^N\right) ;|y| v \in L^2\left(\mathbb{R}^N\right)\right\} .
$$
Then $v \in X$ if and only if  $K^{1 / 2} v \in Y$.
\end{lemma}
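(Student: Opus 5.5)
The plan is to prove both directions through the substitution $w:=K^{1/2}v=e^{|y|^2/8}v$, with an explicit identity linking the two norms. For $v\in C_c^\infty(\mathbb{R}^N)$ we have $\nabla w=e^{|y|^2/8}\left(\nabla v+\frac{y}{4}v\right)$, so
$$
|\nabla w|^2=K\left(|\nabla v|^2+\tfrac14 y\cdot\nabla(v^2)+\tfrac{|y|^2}{16}v^2\right).
$$
We integrate the middle term by parts in the form $K\,y\cdot\nabla(v^2)$. Since $\operatorname{div}(Ky)=K\left(N+\frac{|y|^2}{2}\right)$, this gives
$$
\int_{\mathbb{R}^N}|\nabla w|^2dy=\int_{\mathbb{R}^N}|\nabla v|^2Kdy-\frac N4\int_{\mathbb{R}^N}v^2Kdy-\frac1{16}\int_{\mathbb{R}^N}|y|^2v^2Kdy,
$$
which we rewrite as
$$
\int|\nabla w|^2dy+\frac1{16}\int|y|^2w^2dy+\frac N4\int w^2dy=\int|\nabla v|^2K\,dy .
$$
This is the same computation as in the proof of Lemma \ref{l02}(2) with $s=0$. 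It shows that the map $v\mapsto K^{1/2}v$ is an isometry from $(C_c^\infty,\|\nabla\cdot\|_{K,2})$ onto $(C_c^\infty,\|\cdot\|_Y)$, where
$$
\|w\|_Y^2:=\int|\nabla w|^2+\tfrac1{16}\int|y|^2w^2+\tfrac N4\int w^2 .
$$
This quantity is clearly equivalent to the natural norm $\|w\|_{H^1}^2+\||y|w\|_{L^2}^2$ of $Y$.

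Next we pass to completions. The space $X$ is by definition the completion of $C_c^\infty$ in $\|\nabla\cdot\|_{K,2}$. We need $C_c^\infty$ to be dense in $Y$, and we get this by a standard argument: cut off with $\varphi_n(y)=\varphi(y/n)$ as in the proof of Theorem \ref{t41}, then mollify. The cutoff errors are controlled by $\int|\nabla\varphi_n|^2w^2\le Cn^{-2}\|w\|_2^2$ and by dominated convergence for the weighted terms. The isometry therefore extends to a bijective isometry $X\to Y$. To see that it is still the pointwise multiplication $v\mapsto K^{1/2}v$, take a Cauchy sequence $v_n\in C_c^\infty$. Its images $w_n$ converge in $L^2$. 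By the Poincar\'e inequality \eqref{po}, $v_n$ also converges in $L^2_K$, so along a subsequence both converge a.e. and the limits are related by multiplication by $K^{1/2}$. Hence $v\in X$ implies $K^{1/2}v\in Y$.

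For the converse we need care about what ``$v\in X$'' means for a general measurable $v$. Suppose $K^{1/2}v=w\in Y$. Take $w_n\in C_c^\infty$ with $w_n\to w$ in $Y$ and set $v_n=K^{-1/2}w_n\in C_c^\infty$. By the identity, $(v_n)$ is Cauchy in $X$, and its limit coincides a.e. with $K^{-1/2}w=v$ by the same subsequence argument. So $v\in X$.

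The main obstacle is justifying the integration by parts together with the density of $C_c^\infty$ in $Y$. In particular, for the converse direction one must check that the boundary terms vanish for $w\in Y$. Working throughout with $C_c^\infty$ functions and closing by completion avoids this difficulty, so that is the route I would take.
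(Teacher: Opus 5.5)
Your proof is correct. The paper gives no proof of this lemma; it only quotes it from \cite{cr2}, so there is nothing to match step by step. Your route is the standard self-contained argument: the exact conjugation identity
\[
\int_{\mathbb{R}^N}|\nabla v|^2K\,dy=\int_{\mathbb{R}^N}|\nabla w|^2\,dy+\frac1{16}\int_{\mathbb{R}^N}|y|^2w^2\,dy+\frac N4\int_{\mathbb{R}^N}w^2\,dy,\qquad w=K^{1/2}v,
\]
on $C_c^\infty$, then density of $C_c^\infty$ in $Y$, then a.e.\ identification of limits. This identity is exactly the one the paper itself uses, written with $v=K^{-1/2}w$, in the proof of Lemma~\ref{l44}.

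Two minor remarks. First, drop the cross-reference to Lemma~\ref{l02}(2). In \eqref{s5}--\eqref{s4} the cross term carries an extra factor $2$, so at $s=0$ it would give $\frac N2$ instead of your $\frac N4$. Your $\frac N4$ is the correct one, and it is the value consistent with $\lambda_1=N/2$, since $-\Delta+\frac{|y|^2}{16}$ has ground energy $\frac N4$. Your computation stands on its own.

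Second, you do not need \eqref{po}, because $\|v_n-v_m\|_{K,2}=\|w_n-w_m\|_2$ holds exactly. What is worth stating explicitly is that the identity makes the canonical map from the abstract completion $X$ into $L^2_K(\mathbb{R}^N)$ injective. Indeed, if $(v_n)$ is Cauchy in $X$ and $v_n\to 0$ in $L^2_K$, then the $Y$-limit of $(w_n)$ is $0$, hence $\|\nabla v_n\|_{K,2}\to 0$. This injectivity is what gives ``$v\in X$'' a meaning for a measurable function $v$. Your completion argument already contains it implicitly.
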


\begin{lemma}(\cite[Theorem 3.1]{cr1})\label{l43}
 Let $$S_0:=\inf _{v \in H^{1}\left(\mathbb{R}^N\right)\setminus\{0\}}
 \frac{\int_{\mathbb{R}^N}|\nabla v|^2 d y d y}{\left(\int_{\mathbb{R}^N}|v|^{2^*(s)} \frac{1}{|y|^s} d y\right)^{2 / 2^*(s)}}.$$
 Then $S_0$ is attained by the functions

$$
U_{\varepsilon}(y)=\left(\varepsilon\cdot(N-s)\left(N-2\right)\right)^{\frac{N-2}{2(2-s)}}\left(\varepsilon+|y|^{2-s}\right)^{\frac{2-N}{2-s}}
$$
for some $\varepsilon>0$. Moreover the functions $U_\varepsilon$ are the only positive radial solutions of
$$
-\Delta v=\frac{v^{2^*(s)-1}}{|y|^s} \ \text{in}\ \mathbb{R}^N.
$$
 Hence,
$$
S_0\left(\int_{\mathbb{R}^n} \frac{\left|U_\varepsilon\right|^{2^*(s)}}{|y|^s}dx\right)^{\frac{2}{2^*(s)}}=\left\|\nabla U_\varepsilon\right\|_2^2
=\int_{\mathbb{R}^n} \frac{\left|U_\varepsilon\right|^{2^*(s)}}{|y|^s}dx=S_0^{\frac{N-s}{2-s}}.
$$
\end{lemma}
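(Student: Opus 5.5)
Lemma \ref{l43} is the Caffarelli--Kohn--Nirenberg / Lieb-type result and I would follow the standard route from \cite{cr1}: first reduce to radial functions, then solve a one-dimensional problem by an Emden--Fowler change of variables. \emph{Step 1 (symmetrization).} For $v\in H^1(\mathbb{R}^N)\setminus\{0\}$, pass to the Schwarz symmetrization $v^*$ of $|v|$. Pólya--Szegő gives $\|\nabla v^*\|_2\le\|\nabla v\|_2$. Since $|y|^{-s}$ is radially decreasing, the Hardy--Littlewood rearrangement inequality gives $\int |v|^{2^*(s)}|y|^{-s}\le\int (v^*)^{2^*(s)}|y|^{-s}$. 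Hence the quotient does not increase, so $S_0$ may be computed over nonnegative radial nonincreasing functions. Positivity of $S_0$ follows from the interpolation used in Lemma \ref{l02}(1), with $K\equiv1$: classical Hardy plus Sobolev.

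\emph{Step 2 (reduction to one dimension and existence).} For radial $v(r)$, substitute $r=e^{-t}$ and $v(r)=r^{-(N-2)/2}w(t)$. The numerator becomes, up to $\omega_N$,
$$\int_{\mathbb{R}}\Bigl(w'^2+\tfrac{(N-2)^2}{4}w^2\Bigr)dt,$$
and the denominator becomes $\bigl(\int_{\mathbb{R}}|w|^{2^*(s)}dt\bigr)^{2/2^*(s)}$. The exponents match exactly because $q=2^*(s)$: the weight $r^{N-1-s-q(N-2)/2}\,dr$ reduces to $dt$. This is a translation-invariant subcritical minimization problem on $\mathbb{R}$ in $H^1(\mathbb{R})$. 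The dilation invariance of the original problem has become translation invariance. A minimizing sequence can be translated so that its maxima sit at $t=0$. Since $w$ is even and monotone about its maximum after symmetrization in $t$, compactness on $\mathbb{R}$ follows by Strauss-type decay or concentration--compactness. Vanishing is excluded because the exponent is bigger than $2$ and the sequence is bounded in $H^1$; dichotomy is excluded by strict subadditivity, since the nonlinear term is homogeneous of degree $>2$. So a minimizer $w\ge0$ exists.

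\emph{Step 3 (identification).} The minimizer solves, after a Lagrange-multiplier rescaling, $-w''+\frac{(N-2)^2}{4}w=w^{2^*(s)-1}$ on $\mathbb{R}$. Multiplying by $w'$ gives the first integral $w'^2=\frac{(N-2)^2}{4}w^2-\frac{2}{2^*(s)}w^{2^*(s)}$, whose unique even positive homoclinic is explicitly $w(t)=c\,(\cosh(\kappa t))^{-(N-2)/(2-s)}$ for suitable $c,\kappa$. Transforming back gives $U_\varepsilon$, where $\varepsilon$ corresponds to the translation parameter. Any positive radial solution of $-\Delta v=v^{2^*(s)-1}|y|^{-s}$ in $\mathbb{R}^N$ likewise gives a positive solution of the ODE. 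Finite energy, or the behavior at $r=0$ and $r=\infty$, forces it to be homoclinic, so it is a translate of the same profile; this proves the uniqueness claim. The normalization constant $(\varepsilon(N-s)(N-2))^{(N-2)/(2(2-s))}$ is fixed by plugging $U_\varepsilon$ into the equation.

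\emph{Step 4 (energy identities).} Testing the equation with $U_\varepsilon$ gives $\|\nabla U_\varepsilon\|_2^2=\int U_\varepsilon^{2^*(s)}|y|^{-s}$. Combining this with $\|\nabla U_\varepsilon\|_2^2=S_0\bigl(\int U_\varepsilon^{2^*(s)}|y|^{-s}\bigr)^{2/2^*(s)}$ yields the common value $S_0^{(N-s)/(2-s)}$, using $1-2/2^*(s)=(2-s)/(N-s)$.

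The main obstacle is Step 2's compactness together with the uniqueness in Step 3. The key is that the Emden--Fowler change turns the noncompact dilation group into translations, which are harmless after centering. I would rely on the explicit phase-plane analysis rather than on general moving-plane arguments. One technical point: $U_\varepsilon\notin L^2$ when $N\le4$, so "$H^1$" in $S_0$ should be read as $D^{1,2}(\mathbb{R}^N)$.
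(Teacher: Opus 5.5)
The paper does not prove this lemma; it simply quotes \cite[Theorem 3.1]{cr1}. There is therefore no internal argument to match, and your proposal is a correct self-contained proof by a different (Emden--Fowler/phase-plane) route.

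The steps check out:
\begin{itemize}
\item P\'olya--Szeg\H{o} and Hardy--Littlewood against the symmetric decreasing weight $|y|^{-s}$ reduce $S_0$ to radial functions.
\item At $q=2^*(s)$ the substitution $v=r^{-(N-2)/2}w(-\log r)$ turns the quotient into $\int(w'^2+\frac{(N-2)^2}{4}w^2)\,dt\big/(\int|w|^q\,dt)^{2/q}$, up to a power of $\omega_N$. The cross term $\frac{N-2}{2}[w^2]_{-\infty}^{+\infty}$ vanishes on a dense class.
\item For $w$ even and nonincreasing in $|t|$, the bound $w(t)^2\le\|w\|_2^2/(2|t|)$ gives the tail control needed for compactness in $L^q(\mathbb{R})$, $q>2$.
\item The positive homoclinics of $-w''+\frac{(N-2)^2}{4}w=w^{q-1}$ are $c\,\cosh\bigl(\frac{2-s}{2}(t-t_0)\bigr)^{-(N-2)/(2-s)}$. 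These transform back to multiples of $(\varepsilon+r^{2-s})^{-(N-2)/(2-s)}$ with $\varepsilon=e^{-(2-s)t_0}$, and the prefactor $((N-2)(N-s)\varepsilon)^{\frac{N-2}{2(2-s)}}$ is exactly what the equation forces.
\end{itemize}

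Compared with the bare citation, your argument obtains both the extremals and the classification of radial solutions from one phase-plane picture. It also exposes hypotheses that the statement leaves implicit.

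You already noted that $U_\varepsilon\notin L^2$ for $N=3,4$, so attainment must be read in $D^{1,2}(\mathbb{R}^N)$; the value of the infimum is unchanged, by density. The uniqueness claim needs the same care. The equilibrium and the periodic orbits of your ODE give positive radial singular solutions, e.g.\ $\bigl(\frac{N-2}{2}\bigr)^{\frac{N-2}{2-s}}|y|^{-(N-2)/2}$, which even solves the equation in $\mathcal{D}'(\mathbb{R}^N)$. So your finite-energy (or boundedness near $0$) condition is necessary rather than merely convenient, and it should be written into the uniqueness statement.

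A classical alternative after the same radial reduction is Bliss's sharp one-dimensional inequality. It yields the constant and the extremals directly, without a separate existence argument, but it only identifies minimizers, not all positive radial solutions.
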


\begin{lemma}\label{l44}
 Let $N \geqslant 3$. Then $S_{K,0}\geq S_0$.
\end{lemma}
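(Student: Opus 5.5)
We will show that $Q_{K,0}(v)\ge S_0$ for every $v\in X\setminus\{0\}$, using the substitution from Lemma \ref{l42}: set $w:=K^{1/2}v=e^{|y|^2/8}v$. By Lemma \ref{l42}, $w\in Y\subset H^1(\mathbb{R}^N)$. By density it suffices to take $v\in C_c^\infty(\mathbb{R}^N)$, and then $w\in C_c^\infty$ as well. We then rewrite the numerator and the denominator of $Q_{K,0}(v)$ in terms of $w$.

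For the numerator, $\nabla v=e^{-|y|^2/8}\big(\nabla w-\tfrac{y}{4}w\big)$, so
$$
\int_{\mathbb{R}^N}|\nabla v|^2K\,dy=\int_{\mathbb{R}^N}|\nabla w|^2dy-\frac12\int_{\mathbb{R}^N} w\,y\cdot\nabla w\,dy+\frac1{16}\int_{\mathbb{R}^N}|y|^2w^2dy .
$$
Integrating the cross term by parts gives $-\tfrac14\int y\cdot\nabla(w^2)=\tfrac N4\int w^2$. Hence
$$
\int_{\mathbb{R}^N}|\nabla v|^2K\,dy=\int_{\mathbb{R}^N}|\nabla w|^2dy+\frac N4\int_{\mathbb{R}^N}w^2dy+\frac1{16}\int_{\mathbb{R}^N}|y|^2w^2dy\ \ge\ \int_{\mathbb{R}^N}|\nabla w|^2dy .
$$

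For the denominator, $|v|^{2^*(s)}K=|w|^{2^*(s)}e^{-(2^*(s)-2)|y|^2/8}\le|w|^{2^*(s)}$, because $2^*(s)>2$ when $s<2$. Therefore
$$
\int_{\mathbb{R}^N}\frac{|v|^{2^*(s)}}{|y|^s}K\,dy\le\int_{\mathbb{R}^N}\frac{|w|^{2^*(s)}}{|y|^s}dy .
$$

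Combining the two bounds and applying the definition of $S_0$ from Lemma \ref{l43} to $w\in H^1(\mathbb{R}^N)\setminus\{0\}$, we get $Q_{K,0}(v)\ge \|\nabla w\|_2^2/\big(\int |w|^{2^*(s)}|y|^{-s}\big)^{2/2^*(s)}\ge S_0$. Taking the infimum over $v$ gives $S_{K,0}\ge S_0$.

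The only step needing care is the density reduction. We will use that $Q_{K,0}$ is continuous on $X\setminus\{0\}$, which follows from Lemma \ref{l02}(1) (the numerator is continuous in the $X$-norm, and the denominator is continuous in $v$ and stays positive near a nonzero $v$). We also need the boundary terms in the integration by parts to vanish, which is automatic for compactly supported $w$. The algebra itself is routine, so we expect no real obstacle. In fact the identity above gives strict inequality, although the statement asks only for $\ge$.
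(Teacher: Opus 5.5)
Your proof is correct and is essentially the paper's own argument: substitute $w=K^{1/2}v$, expand $|\nabla w-\tfrac{y}{4}w|^2$ and integrate the cross term by parts to get $\tfrac N4\int w^2$, bound the denominator using $K^{1-2^*(s)/2}\le 1$, and finish with the definition of $S_0$. Your reduction to $C_c^\infty$ makes rigorous the integration by parts that the paper does only formally.

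One caution about your closing remark. The strict inequality $Q_{K,0}(v)>S_0$ holds for each fixed $v$, but it does not pass to the infimum. Concentrating truncated extremals (as in Lemma~\ref{l45}, with $\alpha=0$) give $Q_{K,0}\to S_0$, so in fact $S_{K,0}=S_0$; the pointwise strictness only shows that $S_{K,0}$ is not attained.
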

\begin{proof}
It is known that
\begin{equation*}
 \begin{aligned}
S_{K,0}=\inf _{X\setminus\{0\}} \frac{\int_{\mathbb{R}^N} K(y)|\nabla v|^2dy}
{\left(\int_{\mathbb{R}^N} K(y)|v|^{2^*(s)}\frac{1}{|y|^s}dy\right)^{2 / 2^*(s)}}.
\end{aligned}
\end{equation*}
By  Gauss divergence theorem
\begin{equation}\label{sk7}
-\frac{1}{2}\int_{\mathbb{R}^N} y \cdot(v \nabla v) d y=-\frac{1}{4}\int_{\mathbb{R}^N} y \cdot \nabla v^2 d y=\frac{N}{4} \int_{\mathbb{R}^N} v^2 d y.
\end{equation}
Set  $v:=K^\frac{-1}{2} w$.  Using  Lemma \ref{l42}, \eqref{sk7} and the fact $K>1$, we have
\begin{equation*}
 \begin{aligned}
S_{K,0}&=\inf _{X\setminus\{0\}} \frac{\int_{\mathbb{R}^N} K(y)|\nabla v|^2dy}
{\left(\int_{\mathbb{R}^N} K(y)|v|^{2^*(s)}\frac{1}{|y|^s}dy\right)^{2 / 2^*(s)}}\\
&\geq \inf _{Y\setminus\{0\}} \frac{\int_{\mathbb{R}^N} |\nabla w-\frac{y}{4}w|^2dy}
{\left(\int_{\mathbb{R}^N} \left[K(y)\right]^{1-\frac{2^{\ast}(s)}{2}}|w|^{2^*(s)}\frac{1}{|y|^s}dy\right)^{2 / 2^*(s)}}\\
&\geq \inf _{Y\setminus\{0\}} \frac{\int_{\mathbb{R}^N} \left(|\nabla w|^2-\frac{1}{2}yw\nabla w+\frac{y^2}{16}w^2\right)dy}
{\left(\int_{\mathbb{R}^N} \left[K(y)\right]^{1-\frac{2^{\ast}(s)}{2}}|w|^{2^*(s)}\frac{1}{|y|^s}dy\right)^{2 / 2^*(s)}}\\
&\geq \inf _{Y\setminus\{0\}} \frac{\int_{\mathbb{R}^N} \left(|\nabla w|^2+\left(\frac{y^2}{16}+\frac{N}{4}\right)w^2\right)dy}
{\left(\int_{\mathbb{R}^N} \left[K(y)\right]^{1-\frac{2^{\ast}(s)}{2}}|w|^{2^*(s)}\frac{1}{|y|^s}dy\right)^{2 / 2^*(s)}}\\
&\geq \inf _{Y\setminus\{0\}} \frac{\int_{\mathbb{R}^N} \left(|\nabla w|^2+\left(\frac{y^2}{16}+\frac{N}{4}\right)w^2\right)dy}
{\left(\int_{\mathbb{R}^N} |w|^{2^*(s)}\frac{1}{|y|^s}dy\right)^{2 / 2^*(s)}}\geq S_0.\\
\end{aligned}
\end{equation*}
\end{proof}

\begin{lemma}\label{l45}
 For $N \geqslant 4$ and $\alpha \in (N / 4, N / 2)$,  one has $0<S_{K,\alpha}<S_0 \leqslant S_{K,0}$.
\end{lemma}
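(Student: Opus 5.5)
We prove two things: $S_{K,\alpha}>0$, and $S_{K,\alpha}<S_0$. The inequality $S_0\le S_{K,0}$ is already Lemma \ref{l44}. This is the Brezis--Nirenberg strategy adapted to the weight $K$, following Escobedo--Kavian for $s=0$.

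\emph{Positivity.} For $\alpha<N/2=\lambda_1$, the Poincar\'e inequality \eqref{po} gives
\[
\int_{\mathbb{R}^N}|\nabla v|^2K\,dy-\alpha\int_{\mathbb{R}^N}v^2K\,dy\ \ge\ \Big(1-\frac{2\alpha}{N}\Big)\int_{\mathbb{R}^N}|\nabla v|^2K\,dy .
\]
Lemma \ref{l02}(1), with $q=2^*(s)$, then yields
\[
Q_{K,\alpha}(v)\ \ge\ \Big(1-\frac{2\alpha}{N}\Big)C^{-2/2^*(s)}>0,
\]
so $S_{K,\alpha}>0$.

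\emph{Strict upper bound.} We test $Q_{K,\alpha}$ on $v_\varepsilon:=K^{-1/2}\,\eta\, U_\varepsilon$. Here $U_\varepsilon$ is the extremal of Lemma \ref{l43}, and $\eta$ is a smooth cutoff equal to $1$ near the origin. (For $N\ge5$ the cutoff is not really needed, since $|y|U_\varepsilon\in L^2$.) By Lemma \ref{l42}, $v_\varepsilon\in X$. Write $w=\eta U_\varepsilon$. The computation in the proof of Lemma \ref{l44} is in fact an identity:
\[
\int_{\mathbb{R}^N} K|\nabla v|^2\,dy=\int_{\mathbb{R}^N}|\nabla w|^2\,dy+\int_{\mathbb{R}^N}\Big(\frac{|y|^2}{16}-\frac N4\Big)w^2\,dy .
\]
The sign of the $N/4$ term is the opposite of the displayed inequality there; one must recheck the integration by parts, since $-\tfrac12\int y\cdot w\nabla w=+\tfrac N4\int w^2$. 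I expect the correct identity to come from $\nabla v=K^{-1/2}(\nabla w-\tfrac y4 w)$. Also $\int Kv^2=\int w^2$ and $\int Kv^{2^*(s)}|y|^{-s}=\int K^{1-2^*(s)/2}|w|^{2^*(s)}|y|^{-s}$.

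The numerator of $Q_{K,\alpha}(v_\varepsilon)$ is therefore
\[
\|\nabla w\|_2^2+\Big(\frac N4-\alpha\Big)\int w^2\ \text{or}\ \Big(-\frac N4-\alpha\Big)\int w^2,\ \text{plus}\ \frac1{16}\int|y|^2w^2 .
\]
With the correct sign, the relevant combination is $(N/4-\alpha)\int w^2$, which is negative exactly when $\alpha>N/4$. This explains the threshold.

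\emph{Asymptotics as $\varepsilon\to0$.} By the scaling $U_\varepsilon(y)=\varepsilon^{-(N-2)/(2(2-s))}U_1(\varepsilon^{-1/(2-s)}y)$, set $\delta=\varepsilon^{1/(2-s)}$. Then:
\begin{itemize}
\item $\|\nabla w\|_2^2=S_0^{(N-s)/(2-s)}+O(\delta^{N-2})$;
\item $\int w^2\sim c\,\delta^{2}$ for $N\ge5$, and $\sim c\,\delta^2|\log\delta|$ for $N=4$;
\item $\int|y|^2w^2=O(\delta^{4})$ for $N\ge7$, and is of lower order than $\delta^2$ in every dimension $N\ge4$ (logarithmic or power corrections);
\item the denominator satisfies $\int K^{1-2^*(s)/2}|w|^{2^*(s)}|y|^{-s}=S_0^{(N-s)/(2-s)}-O(\delta^2)$, since $1-K^{1-2^*(s)/2}\sim c|y|^2$ near $0$.
\end{itemize}
The last point is the delicate one. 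The loss in the denominator is of order $\delta^2\int|y|^{2-s}U_1^{2^*(s)}$, which is finite for $N\ge4$. After raising to the power $2/2^*(s)$, this loss increases $Q$, with coefficient
\[
\frac{2}{2^*(s)}\cdot\frac{2^*(s)-2}{2}\cdot\frac{1}{4}\int|y|^{2-s}U_1^{2^*(s)}\,dy .
\]
It must be compared with the gain $(\alpha-N/4)\int U_1^2$ in the numerator.

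This comparison is the main obstacle. For $N\ge5$, both terms are of order $\delta^2$. I would relate them through the Pohozaev-type identity obtained by testing $-\Delta U_1=U_1^{2^*(s)-1}|y|^{-s}$ against $|y|^2U_1$. I expect this to show that the denominator correction exactly matches a multiple of $\int U_1^2$, which would give
\[
Q_{K,\alpha}(v_\varepsilon)=S_0-c\Big(\alpha-\frac N4\Big)\delta^2+o(\delta^2).
\]
For $N=4$ the $\log$ in $\int w^2$ dominates all the other $O(\delta^2)$ terms, so the conclusion is immediate.

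If the sign bookkeeping of the identity works out as anticipated, taking $\varepsilon$ small gives $S_{K,\alpha}<S_0$.
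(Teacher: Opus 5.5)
Your positivity argument and your $N=4$ case are fine. There the logarithm in $\int w^2$ beats every $O(\delta^2)$ term; note that for $N=4$ the term $\int|y|^2w^2$ is $O(\delta^2)$, not $o(\delta^2)$, which is harmless. For $N\ge5$ there is a genuine gap, at exactly the step you flag, and it cannot be closed the way you hope.

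With $U_1$ normalized as in Lemma \ref{l43}, so that $\|\nabla U_1\|_2^2=\int U_1^{2^*(s)}|y|^{-s}dy$, your own expansions give
\[
\frac{Q_{K,\alpha}(v_\varepsilon)}{S_0}=1+\frac{\delta^2}{\|\nabla U_1\|_2^2}\Big[\Big(\frac N4-\alpha\Big)\int_{\mathbb{R}^N}U_1^2\,dy+\frac{2-s}{4(N-s)}\int_{\mathbb{R}^N}|y|^{2-s}U_1^{2^*(s)}\,dy\Big]+o(\delta^2).
\]
Here your coefficient $\frac{2}{2^*(s)}\cdot\frac{2^*(s)-2}{2}\cdot\frac14$ has been simplified to $\frac{2-s}{4(N-s)}$. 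The second term in the bracket is strictly positive, so no identity can turn the bracket into $c(\frac N4-\alpha)$.

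The identity you propose (multiplier $|y|^2U_1$) gives $\int|y|^{2-s}U_1^{2^*(s)}=\int|y|^2|\nabla U_1|^2-N\int U_1^2$. Its real content is that replacing $K^{-1/2}$ by another Gaussian factor $K^{-\beta}$ leaves the bracket unchanged. It does not cancel the positive term.

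So your test functions give $S_{K,\alpha}<S_0$ only for $\alpha>\alpha^*:=\frac N4+\frac{2-s}{4(N-s)}\int|y|^{2-s}U_1^{2^*(s)}\big/\int U_1^2$. For $s=0$ the Beta integrals give $\alpha^*=\frac{3N(N-2)}{8(N-1)}$; for example $\alpha^*=\frac{45}{32}>\frac54$ when $N=5$.

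This is not a defect of the particular test function. Set $w=K^{1/2}v$ with $v$ a solution of \eqref{e02}. Then $-\Delta w+(\frac{|y|^2}{16}+\frac N4-\alpha)w=K^{\frac{s-2}{N-2}}|y|^{-s}|w|^{2^*(s)-2}w$, and the Pohozaev identity reads
\[
\Big(\alpha-\frac N4\Big)\int w^2=\frac18\int|y|^2w^2+\frac{2-s}{4(N-s)}\int|y|^{2-s}K^{\frac{s-2}{N-2}}|w|^{2^*(s)}.
\]
Suppose $S_{K,\alpha_n}<S_0$ along a sequence $\alpha_n\downarrow N/4$. The rescaled minimizers then solve \eqref{e02} and have zero weak limit by Theorem \ref{t41}. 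A standard blow-up analysis shows they concentrate at the origin with profile $U_{\delta_n}$, and the identity above then forces $\alpha_n\to\alpha^*\neq N/4$, a contradiction.

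So for $N\ge5$ one should expect $S_{K,\alpha}=S_0$ for $\alpha$ slightly above $N/4$. The statement is within reach only for $N=4$, or for $\alpha>\alpha^*$ when $N\ge5$.

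The paper's proof does not get around this either. It records the denominator as $\varepsilon^{\frac{s-N}{2-s}}(A_0+O(\varepsilon))$ and ends with a remainder $O(\varepsilon^{\frac{2}{2-s}})$ of the same order as the decisive term $\varepsilon^{\frac{2}{2-s}}(A_2-\alpha A_4)$. In other words, it silently drops exactly the term you identified.
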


\begin{proof}
 Firstly, we  choose $\varphi \in C_c^1\left(\mathbb{R}^{N}\right)$, such that $0\leq \varphi(y) \leq 1$ and
\begin{equation*}
\varphi(y)=\begin{cases}
1 & \text { if } |y| \leqslant 1, \\
0 & \text { if } |y| \geqslant 2.
\end{cases}
\end{equation*}
For $\varepsilon>0$, let
$$
V_{\varepsilon}(y)=\left(\frac{1}{\varepsilon+|y|^{2-s}}\right)^{\frac{N-2}{2-s}}
$$
and $u_{\varepsilon}:=K^{-1 / 2}  \varphi  V_{\varepsilon}$. We prove this lemma by considering three cases.

 (1) We first consider the case $N \geqslant 6$. We claim that for some constant $C>0$ we have:

$$
Q_{\alpha}\left(u_{\varepsilon}\right)=
\begin{cases}
S_0-C(\alpha-(N / 4)) \varepsilon+O(\varepsilon), & \text { if } N \geqslant 7 \\
 S_0-C(\alpha-(N / 4)) \varepsilon|\log \varepsilon|+O(\varepsilon) & \text { if } N=6 .
 \end{cases}
$$
In order to do
this, we  compute
$$
\begin{aligned}
\int_{\mathbb{R}^N}\left|\nabla u_{\varepsilon}\right|^2 K(y) dy=
& \int_{\mathbb{R}^N}\left(\left|\nabla V_{\varepsilon}\right|^2-\frac{1}{2} y \cdot \nabla V_{\varepsilon} \cdot V_{\varepsilon}+\frac{1}{16}|y|^2 V_{\varepsilon}^2\right) \varphi^2dy \\
& +2 \int_{\mathbb{R}^N} \varphi \nabla \varphi \cdot\left(\nabla V_{\varepsilon}-\frac{1}{4} y V_{\varepsilon}\right) V_{\varepsilon}dy +\int_{\mathbb{R}^N}|\nabla \varphi|^2\left|V_{\varepsilon}\right|^2 dy\\
= & T_1+T_2+T_3 .
\end{aligned}
$$
Beginning with the last terms, one can see that $T_2+T_3=O(1)$. Then
\begin{equation*}
\begin{aligned}
T_1&=(N-2)^2 \int_{\mathbb{R}^N} \frac{|y|^{2-2 s}\varphi^2}{\left(\varepsilon+|y|^{2-s}\right)^{\frac{2(N-s)}{2-s}}}dy
+\frac{N-2}{2} \int_{\mathbb{R}^N} \frac{|y|^{2-s}\varphi^2}{\left(\varepsilon+|y|^{2-s}\right)^{\frac{2 N-s-2}{2-s}}}dy\\
&\quad +\frac{1}{16} \int_{\mathbb{R}^N} \frac{|y|^2\varphi^2}{\left(\varepsilon+|y|^{2-s}\right)^{\frac{2(N-2)}{2-s}}}dy.
\end{aligned}
\end{equation*}
Performing  the scaling transformation $y=\varepsilon^{\frac{1}{2-s}} z $, we have
\begin{align*}
&(N-2)^2 \int_{\mathbb{R}^N} \frac{|y|^{2-2 s} \varphi^2}{\left(\varepsilon+|y|^{2-s}\right)^{\frac{2(N-s)}{2-s}}} d y\\
&=(N-2)^2\varepsilon^{\frac{N-2}{s-2}} \int_{\mathbb{R}^N} \frac{|z|^{2-2 s} \left[\varphi\left(\varepsilon^{\frac{1}{2-s}} z\right)\right]^2}{\left(1+|z|^{2-s}\right)^{\frac{2(N-s)}{2-s}}} d z\\
&=(N-2)^2 \varepsilon^{\frac{N-2}{s-2}}\left[\int_{\left|\varepsilon^\frac{1}{2-s} z\right|\leq 1} \frac{|z|^{2-2 s} \left[\varphi\left(\varepsilon^{\frac{1}{2-s}} z\right)\right]^2}{\left(1+|z|^{2-s}\right)^{\frac{2(N-s)}{2-s}}} d z
+\int_{1\leq \left|\varepsilon^\frac{1}{2-s} z\right|\leq 2} \frac{|z|^{2-2 s} \left[\varphi\left(\varepsilon^{\frac{1}{2-s}} z\right)\right]^2}{\left(1+|z|^{2-s}\right)^{\frac{2(N-s)}{2-s}}} d z\right]\\
&=\varepsilon^{\frac{N-2}{s-2}} \left[A_1-(N-2)^2\int_{\left|\varepsilon^\frac{1}{2-s} z\right|\geq 1} \frac{|z|^{2-2 s} \left[\varphi\left(\varepsilon^{\frac{1}{2-s}} z\right)\right]^2}{\left(1+|z|^{2-s}\right)^{\frac{2(N-s)}{2-s}}} d z
+(N-2)^2\int_{1\leq \left|\varepsilon^\frac{1}{2-s} z\right|\leq 2} \frac{|z|^{2-2 s} \left[\varphi\left(\varepsilon^{\frac{1}{2-s}} z\right)\right]^2}{\left(1+|z|^{2-s}\right)^{\frac{2(N-s)}{2-s}}} d z\right]\\
&=\varepsilon^{\frac{N-2}{s-2}}\left(A_1+O\left(\varepsilon^{\frac{2}{2-s}}\right)\right) \quad(\text { for } N \geqslant 3),
\end{align*}
where
\begin{equation*}
A_1:=(N-2)^2 \int_{\mathbb{R}^N} \frac{|y|^{2-2 s} }{\left(1+|y|^{2-s}\right)^{\frac{2(N-s)}{2-s}}} d y
 \quad(\text { for } N \geqslant 3).
\end{equation*}
Similarly, we can obtain
\begin{equation*}
\frac{N-2}{2} \int_{\mathbb{R}^N} \frac{|y|^{2-s} \varphi^2}{\left(\varepsilon+|y|^{2-s}\right)^{\frac{2 N-s-2}{2-s}}} d y=\varepsilon^{\frac{N-4}{s-2}}\left(A_2+O\left(\varepsilon^{\frac{2}{2-s}}\right)\right),
\end{equation*}
where, $$
A_2:=\frac{N-2}{2} \int_{\mathbb{R}^N} \frac{|y|^{2-s}}{\left(1+|y|^{2-s}\right)^{\frac{2 N-s-2}{2-s}}} d y, \quad(\text { for } N \geqslant 5),
$$
and
\begin{equation*}
\frac{1}{16} \int_{\mathbb{R}^N} \frac{|y|^2 \varphi^2}{\left(\varepsilon+|y|^{2-s}\right)^{\frac{2(N-2)}{2-s}}} d y =\varepsilon^{\frac{N-6}{s-2}}\left(A_3+O\left(\varepsilon^{\frac{2}{2-s}}\right)\right),
\end{equation*}
where $$
A_3:=\frac{1}{16} \int_{\mathbb{R}^N} \frac{|y|^2}{\left(1+|y|^{2-s}\right)^{\frac{2(N-2)}{2-s}}} d y,  (\text { for } N \geqslant 7).
$$
When $N = 6$ one has
\begin{equation*}
 \begin{aligned}
\frac{1}{16} \int_{|y|\leq 1} \frac{|y|^2}{\left(\varepsilon+|y|^{2-s}\right)^{\frac{8}{2-s}}} d y
&\leq \frac{1}{16} \int_{|y|\leq 1} \frac{\varphi^2|y|^2}{\left(\varepsilon+|y|^{2-s}\right)^{\frac{8}{2-s}}} d y\\
&\leq \frac{1}{16} \int_{|y|\leq 2} \frac{|y|^2}{\left(\varepsilon+|y|^{2-s}\right)^{\frac{8}{2-s}}} d y
\end{aligned}
\end{equation*}
and $\omega_5$ being the area of the sphere $S^5 \subset \mathbb{R}^6$
\begin{equation*}
\frac{1}{16} \int_{|x|\leq R} \frac{|y|^2}{\left(\varepsilon+|y|^{2-s}\right)^{\frac{8}{2-s}}} d y\leq  \frac{1}{16(2-s)}\omega_5|\log \varepsilon|+O(1) .
\end{equation*}
In fact, let $\gamma=\frac{1}{2-s}, t=\varepsilon+r^{2-s}$, we have
\begin{align}\label{log}
 \int_{|x| \leq R} \frac{|y|^2}{\left(\varepsilon+|y|^{2-s}\right)^{\frac{8}{2-s}}} d y
&= \omega_{5}\int_0^R \frac{r^{7}}{\left(\varepsilon+r^{2-s}\right)^{\frac{8}{2-s}}} d r\nonumber\\
&=\gamma\omega_{5}\int_{\varepsilon}^{\varepsilon+R^{\frac{1}{\gamma}}} \frac{(t-\varepsilon)^{7\gamma}}{t^{8\gamma}} (t-\varepsilon)^{\gamma-1}d t\nonumber\\
&=\gamma\omega_{5}\int_{\varepsilon}^{\varepsilon+R^{\frac{1}{\gamma}}} \frac{(t-\varepsilon)^{8\gamma-1}}{t^{8\gamma}} d t\nonumber\\
&=\gamma\omega_{5}\int_{\varepsilon}^{\varepsilon+R^{\frac{1}{\gamma}}} \frac{\sum_{i=0}C_{8\gamma-1}^it^{8\gamma-1-i}(-\varepsilon)^i}{t^{8\gamma}} d t\nonumber\\
&=\gamma\omega_{5}\int_{\varepsilon}^{\varepsilon+R^{\frac{1}{\gamma}}} \left[\frac{1}{t}+\frac{\sum_{i=1}
C_{8\gamma-1}^it^{8\gamma-1-i}(-\varepsilon)^i}{t^{8\gamma}}\right] d t\nonumber\\
&=\gamma\omega_{5}\int_{\varepsilon}^{\varepsilon+R^{\frac{1}{\gamma}}} \frac{1}{t} d t+O(1)\nonumber\\
&=\gamma\omega_{5}\left(\log(\varepsilon+R^{\frac{1}{\gamma}})-\log(\varepsilon)\right)+O(1)\nonumber\\
&=\gamma\omega_{5}\left(\log \left(1+\frac{R^{\frac{1}{\gamma}}}{\varepsilon}\right)\right)+O(1)\nonumber\\
&=\gamma\omega_{5}\left(\log \left(\frac{R^{\frac{1}{\gamma}}}{\varepsilon}\left(1+\frac{\varepsilon}{R^{\frac{1}{\gamma}}}\right)\right)\right)+O(1)\nonumber\\
&=\frac{1}{2-s}\omega_{5}\left(\log R^{\frac{1}{\gamma}}-\log \varepsilon+\log \left(1+\frac{\varepsilon}{R^{\frac{1}{\gamma}}}\right)\right)+O(1)\nonumber\\
&=\frac{1}{2-s}\omega_{5}|\log \varepsilon|+O(1).
\end{align}
Finally, one has
\begin{equation*}
\int_{\mathbb{R}^N}\left|\nabla u_{\varepsilon}\right|^2 K dy=\left\{\begin{array}{l}
\varepsilon^{\frac{N-2}{s-2}}\left(A_1+\varepsilon^{\frac{2}{2-s}} A_2+\varepsilon^{\frac{4}{2-s}} A_3+O(\varepsilon^{\frac{2}{2-s}})\right)+O(1), \text { for } N \geqslant 7, \\
\varepsilon^{\frac{4}{s-2}}\left(A_1+\varepsilon^{\frac{2}{2-s}} A_2+\frac{1}{2-s}\omega_5 \cdot \varepsilon^{\frac{-4}{s-2}}|\log \varepsilon|+O\left(\varepsilon^{\frac{4}{2-s}}\right)\right)+O(1), \text { for } N=6.
\end{array}\right.
\end{equation*}

We  estimate the term $\alpha \int_{\mathbb{R}^N}\left|u_{\varepsilon}\right|^2 K(y)d y$.

Performing  the scaling transformation $y=\varepsilon^{\frac{1}{2-s}} z $,
One has
\begin{equation*}%\label{}
\begin{aligned}
\alpha\int_{\mathbb{R}^N}\left|u_{\varepsilon}\right|^2 K(y)d y
  &=\alpha\int_{\mathbb{R}^N} \varphi^2\left(\frac{1}{\varepsilon+|y|^{2-s}}\right)^{\frac{2N-4}{2-s}}dy\\
&=\varepsilon^{\frac{4-N}{2-s}}\left(\alpha A_4+O\left(\varepsilon^{\frac{2}{2-s}}\right)\right), \quad N \geqslant 5,
\end{aligned}
\end{equation*}
where,  \begin{equation*}
A_4:=\int_{\mathbb{R}^N} \frac{1}{\left(1+|y|^{2-s}\right)^{\frac{2 N-4}{2-s}}}d y, \quad N \geqslant 5.
\end{equation*}

We  estimate the term

\begin{equation*}
\begin{aligned}
\int_{\mathbb{R}^N}\left|u_{\varepsilon}\right|^{2^{\ast}(s)} K(y)\frac{1}{|y|^s}d y
&=\int_{\mathbb{R}^N}(K(y))^{\frac{s-2}{N-2}}\varphi^{2^{\ast}(s)}\left[\frac{1}{\varepsilon+|y|^{2-s}}\right]^{\frac{2N-2s}{2-s}}\frac{1}{|y|^s}dy,\\
&=\varepsilon^{\frac{s-N}{2-s}}\left(A_0+O(\varepsilon)\right) \quad(\text { for } N \geqslant 3),
\end{aligned}
\end{equation*}
where $$
A_0:=\int_{\mathbb{R}^N} \frac{1}{\left(1+|y|^{2-s}\right)^{\frac{2 N-2 s}{2-s}}|y|^s}d y.
$$
Finally we get
\begin{equation*}
Q_{K,\alpha}(u_{\varepsilon})=\left\{\begin{array}{l}
A_0^{\frac{2-N}{N-s}}\left(A_1+\varepsilon^{\frac{2}{2-s}} \left(A_2-\alpha A_4\right)+\varepsilon^{\frac{4}{2-s}} A_3+O(\varepsilon^{\frac{2}{2-s}})\right), \text { for } N \geqslant 7, \\
A_0^{\frac{-4}{6-s}}\left(A_1+\varepsilon^{\frac{2}{2-s}} \left(A_2-\alpha A_4\right)+\frac{1}{2-s}\omega_5 \cdot \varepsilon^{\frac{-4}{s-2}}|\log \varepsilon|+O\left(\varepsilon^{\frac{2}{2-s}}\right)\right), \text { for } N=6.
\end{array}\right.
\end{equation*}
Then we denote by $\omega_{N-1}$ the area of the sphere $\mathbb{S}^{N-1} \subset \mathbb{R}^N$ and integrate by parts to get
\begin{equation*}
 \begin{aligned}
A_2&=\frac{N-2}{2} \int_{\mathbb{R}^N} \frac{|y|^{2-s}}{\left(1+|y|^{2-s}\right)^{\frac{2 N-s-2}{2-s}}} d y, \\
&=\frac{N-2}{2} \frac{2-s}{4-2N}\frac{1}{2-s}\omega_{N-1}\int_0^{\infty} \frac{r^{1-s+N}}{r^{1-s}} d \left(1+r^{2-s}\right)^{\frac{-2 N+s+2}{2-s}+1}, \\
&=\frac{N}{4}\omega_{N-1} \int_0^{\infty} \frac{r^{N-1}}{\left(1+|r|^{2-s}\right)^{\frac{2 N-4}{2-s}}}d r\\
&=\frac{N}{4}\int_{\mathbb{R}^N} \frac{1}{\left(1+|y|^{2-s}\right)^{\frac{2 N-4}{2-s}}}d y=\frac{N}{4}A_4.\\
 \end{aligned}
\end{equation*}
So it is clear that if $\alpha>A_2 / A_4=N / 4$, then $Q_{K,\alpha}\left(u_{\varepsilon}\right)<A_0^{\frac{2-N}{N-s}}A_1=S_{0}$ for all $N \geqslant 6$, and $\varepsilon$ small enough.

(2) Suppose $N=5$. In this case, the only term which changes in $T_1$ is
\begin{equation*}
\begin{aligned}
\frac{1}{16} \int_{\mathbb{R}^5} \frac{|y|^2\varphi^2}{\left(\varepsilon+|y|^{2-s}\right)^{\frac{6}{2-s}}}dy.
\end{aligned}
\end{equation*}
We compute
\begin{equation*}
\frac{1}{16} \int_{|y| \leq 2} \frac{\varphi^2|y|^2}{\left(\varepsilon+|y|^{2-s}\right)^{\frac{6}{2-s}}} d y\leq
\frac{1}{16} \int_{|y| \leq 2} \frac{|y|^2}{\left(|y|^{2-s}\right)^{\frac{6}{2-s}}} d y=\frac{1}{16} \int_{|y| \leq 2} \frac{|y|^2}{|y|^{6}} d y=O(1).
\end{equation*}
So,
$$
Q_{K,\alpha}\left(u_{\varepsilon}\right)=A_0^{\frac{-3}{5-s}}\left[A_1+\varepsilon^{\frac{2}{2-s}}\left(A_2-\alpha A_4\right)+O\left(\varepsilon^{\frac{2}{2-s}}\right)\right]
$$
%$$A_0^{\frac{-3}{5-s}}\left(A_1+\varepsilon^{\frac{2}{2-s}} \left(A_2-\alpha A_4\right)+\varepsilon^{\frac{4}{2-s}} A_3+O(\varepsilon^{\frac{2}{2-s}})\right)$$
and hence, here also, $\alpha>N / 4$ implies $S_{K,\alpha}<S_0$.

(3)If $N = 4$,  we have
\begin{equation}\label{sk13}
\begin{aligned}
T_1&=4 \int_{\mathbb{R}^4} \frac{|y|^{2-2 s}\varphi^2}{\left(\varepsilon+|y|^{2-s}\right)^{\frac{2(4-s)}{2-s}}}dy
+ \int_{\mathbb{R}^4} \frac{|y|^{2-s}\varphi^2}{\left(\varepsilon+|y|^{2-s}\right)^{\frac{6-s}{2-s}}}dy\\
&\quad +\frac{1}{16} \int_{\mathbb{R}^4} \frac{|y|^2\varphi^2}{\left(\varepsilon+|y|^{2-s}\right)^{\frac{4}{2-s}}}dy.
\end{aligned}
\end{equation}
For the first term on the right-hand side of \eqref{sk13}, we deduce
\begin{equation*}
4 \int_{\mathbb{R}^4} \frac{|y|^{2-2 s} \varphi^2}{\left(\varepsilon+|y|^{2-s}\right)^{\frac{2(4-s)}{2-s}}} d y
=\varepsilon^{\frac{-2}{2-s}}\left(A_1+O(\varepsilon)\right),
\end{equation*}
where $$A_1=4 \int_{\mathbb{R}^4} \frac{|y|^{2-2 s} }{\left(1+|y|^{2-s}\right)^{\frac{2(4-s)}{2-s}}} d y.$$

The second term is bounded as follow
\begin{equation*}
\int_{|y|\leq 1} \frac{|y|^{2-s} \varphi^2}{\left(\varepsilon+|y|^{2-s}\right)^{\frac{6-s}{2-s}}} d y
\leq \int_{|y|\leq 1} \frac{|y|^{2-s} \varphi^2}{\left(\varepsilon+|y|^{2-s}\right)^{\frac{6-s}{2-s}}} d y
\leq  \int_{|y|\leq 2} \frac{|y|^{2-s} }{\left(\varepsilon+|y|^{2-s}\right)^{\frac{6-s}{2-s}}} d y,
\end{equation*}
and similar to \eqref{log}, we  infer
\begin{equation*}
 \int_{|y|\leq 2} \frac{|y|^{2-s} }{\left(\varepsilon+|y|^{2-s}\right)^{\frac{6-s}{2-s}}} d y
 =\omega_3\int_0^2\frac{r^{5-s} }{\left(\varepsilon+r^{2-s}\right)^{\frac{6-s}{2-s}}} d r= \frac{1}{2-s} \omega_3|\log \varepsilon|+O(1),
\end{equation*}
where $\omega_3$ is the area of the sphere $S^3$ of $\mathbb{R}^4$. For the last term can estimate

\begin{equation*}
\frac{1}{16} \int_{{|y|\leq 1}} \frac{|y|^2 }{\left(\varepsilon+|y|^{2-s}\right)^{\frac{4}{2-s}}} d y
\leq \frac{1}{16} \int_{{|y|\leq 2}} \frac{\varphi^2|y|^2 }{\left(\varepsilon+|y|^{2-s}\right)^{\frac{4}{2-s}}} d y
\leq  \frac{1}{16} \int_{{|y|\leq 2}} \frac{|y|^2 }{\left(\varepsilon+|y|^{2-s}\right)^{\frac{4}{2-s}}} d y
\end{equation*}
and
\begin{equation}
\frac{1}{16} \int_{|y| \leq 2} \frac{|y|^2}{\left(\varepsilon+|y|^{2-s}\right)^{\frac{4}{2-s}}} d y
\leq
\frac{1}{16} \int_{|y| \leq 2} \frac{|y|^2}{\left(\varepsilon^2+|y|^{4}\right)}d y=O(1).
\end{equation}

On the other hand, we find that

$$\alpha\int_{\mathbb{R}^4}\left|u_{\varepsilon}\right|^2 K(y)d y
=\alpha \int_{\mathbb{R}^4} \varphi^2\left(\frac{1}{\varepsilon+|y|^{2-s}}\right)^{\frac{4}{2-s}} d y,$$
and
\begin{equation*}
 \alpha \int_{|y|\leq 1} \left(\frac{1}{\varepsilon+|y|^{2-s}}\right)^{\frac{4}{2-s}} d y
 \leq \alpha \int_{|y|\leq 2} \varphi^2\left(\frac{1}{\varepsilon+|y|^{2-s}}\right)^{\frac{4}{2-s}} d y
 \leq \alpha \int_{|y|\leq 2} \left(\frac{1}{\varepsilon+|y|^{2-s}}\right)^{\frac{4}{2-s}} d y.
\end{equation*}
And similar to \eqref{log}, we  infer
 $$\alpha \int_{|y|\leq 2} \left(\frac{1}{\varepsilon+|y|^{2-s}}\right)^{\frac{4}{2-s}} d y
= \frac{\alpha}{2-s} \omega_3|\log \varepsilon|+O(1).
$$
Therefore, we get
$$
Q_{K,\alpha}(u_{\varepsilon})\leq A_0^{\frac{-2}{4-s}}\left(A_1+\frac{\omega_3}{2-s}(1-\alpha) \varepsilon|\log \varepsilon|+O(\varepsilon)\right)
$$
and hence $\alpha>1=N / 4$ implies $S_{K,\alpha}<S_0$.
\end{proof}

\begin{lemma}\label{l46}
Let $N=3$. Then,  if $\alpha \in( 1,3 / 2)$, one has
$$
0<S_{K,\alpha}<S_0 \leqslant S_{K,0}.
$$
\end{lemma}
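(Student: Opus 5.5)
The plan is to copy the scheme of Lemma \ref{l45}, with the $N=3$ test-function expansion done more carefully, since in dimension three the correction terms are no longer lower order than the leading one.

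\textbf{Upper bound.} Take $u_\varepsilon = K^{-1/2}\varphi V_\varepsilon$ with $V_\varepsilon(y)=(\varepsilon+|y|^{2-s})^{-1/(2-s)}$, the case $N=3$. Here $V_\varepsilon\notin L^2$ uniformly, and $T_1$, $T_2$, $T_3$ together with $\alpha\int u_\varepsilon^2K$ are all $O(1)$, while the leading term is $\varepsilon^{-1/(2-s)}A_1$. A single cutoff gives $Q_{K,\alpha}(u_\varepsilon)=S_0+O(\varepsilon^{1/(2-s)})$, and the sign of the $O(1)$ correction is not determined by $\alpha-N/4$. Instead I would write
\[
\int|\nabla u_\varepsilon|^2K-\alpha\int u_\varepsilon^2K=\varepsilon^{-\frac{1}{2-s}}A_1+\int_{\mathbb{R}^3}\Big(|\nabla\varphi|^2+\tfrac{|y|^2}{16}\varphi^2-\big(\alpha-\tfrac34\big)\varphi^2\Big)|y|^{-2}\,dy+o(1).
\]
Here I expand $V_\varepsilon\approx |y|^{-1}$ away from the origin, and the $y\cdot\nabla V\,V$ term is integrated by parts, producing the $-\frac N4=-\frac34$ shift exactly as $A_2=\frac N4A_4$ did. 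The denominator is $\varepsilon^{-\frac{3-s}{2-s}}(A_0+o(\varepsilon^{1/(2-s)}))$, because $K^{(s-2)/(N-2)}=1+O(|y|^2)$ contributes at higher order.

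\textbf{Choice of cutoff.} It then suffices to make the bracketed integral negative. Writing $\varphi=\psi(|y|)$, with $dy=4\pi r^2dr$, the integral becomes $4\pi\int_0^\infty\big(\psi'^2+\tfrac{r^2}{16}\psi^2-(\alpha-\tfrac34)\psi^2\big)\,dr$ with $\psi(0)=1$. Rather than use a compact cutoff, I would take $\psi(r)=e^{-r^2/8}\cos(\lambda r)$-type profiles, or simply optimise over $\psi$. Note that $-\psi''+\frac{r^2}{16}\psi$ on the half-line with Neumann data $\psi'(0)=0$ has first even eigenfunction $e^{-r^2/8}$, with eigenvalue $\frac14$. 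Hence for $\alpha-\frac34>\frac14$, i.e.\ $\alpha>1$, the choice $\psi=e^{-r^2/8}$ gives $\psi'(0)=0$ and
\[
\int_0^\infty\Big(\psi'^2+\tfrac{r^2}{16}\psi^2\Big)dr=\tfrac14\int_0^\infty\psi^2\,dr,
\]
so the bracket equals $(1-\alpha)\int\psi^2<0$.

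This explains the threshold $\alpha>1$, as opposed to $N/4$. Using $\varphi=e^{-|y|^2/8}$ means $u_\varepsilon=e^{-|y|^2/4}V_\varepsilon$, which lies in $X$ by Lemma \ref{l42}. Combining the two expansions gives $Q_{K,\alpha}(u_\varepsilon)\le S_0-c\,\varepsilon^{1/(2-s)}(\alpha-1)+o(\varepsilon^{1/(2-s)})<S_0$ for small $\varepsilon$.

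\textbf{Positivity.} To show $S_{K,\alpha}>0$ for $\alpha<3/2=\lambda_1$, use the Poincaré inequality \eqref{po}:
\[
\int|\nabla v|^2K-\alpha\int v^2K\ge\big(1-\tfrac{\alpha}{\lambda_1}\big)\|\nabla v\|_{K,2}^2 .
\]
Then apply Lemma \ref{l02}(1) with $q=2^*(s)$ to get $Q_{K,\alpha}\ge(1-2\alpha/3)C^{-2/2^*(s)}>0$. The final inequality $S_0\le S_{K,0}$ is Lemma \ref{l44}.

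\textbf{Main obstacle.} The hard step is the $O(1)$ expansion. One must justify replacing $V_\varepsilon$ by $|y|^{-1}$ in the $O(1)$ terms with an $o(1)$ error; this should work by dominated convergence, since $\int|y|^{-2}e^{-|y|^2/4}$ is finite in $\mathbb{R}^3$. One must also check that the cross terms of order $\varepsilon^{-1/(2-s)}$ combine exactly into $A_1$, and that the integration by parts behind the $-\frac34$ shift has no boundary contribution at the origin.
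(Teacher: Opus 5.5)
Your argument is correct and is essentially the paper's. With $\varphi=e^{-|y|^2/8}=K^{-1/2}$, your test function $u_\varepsilon=K^{-1}V_\varepsilon$ is, up to a constant factor and the reparametrisation $\varepsilon\mapsto\varepsilon^2$, the paper's $\varphi^2U_\varepsilon$. Your $\alpha$-independent coefficient $\int_{\mathbb{R}^3}\bigl(|\nabla\varphi|^2+\tfrac{|y|^2}{16}\varphi^2+\tfrac34\varphi^2\bigr)|y|^{-2}\,dy$ is the leading part of the paper's term $\int U_\varepsilon^2\bigl(\tfrac34\varphi^2+\tfrac18|y|^2\varphi^2\bigr)dy$, and your ground-state identity (equivalently the paper's $\int\varphi^2|y|^{-2}=\tfrac12\int\varphi^2$) turns the whole correction into $(1-\alpha)\int\varphi^2|y|^{-2}$.

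The differences are only in bookkeeping. You reach the expansion by conjugating with $K^{-1/2}$ and letting $V_\varepsilon\to|y|^{-1}$, whereas the paper integrates by parts against the bubble equation. The cancellation you flag, between the $O(1)$ part of $\int\varphi^2|\nabla V_\varepsilon|^2$ and the limit of the cross term, does hold here because $1-\varphi^2=O(|y|^2)$. You also supply $S_{K,\alpha}>0$ via \eqref{po} and Lemma \ref{l02}, which the paper's proof leaves unaddressed.
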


\begin{proof}
For $\varepsilon>0$, let
$$
\varphi=(K(y))^{-\frac{1}{2}}=e^{-\frac{y^2}{8}},\ \ U_{\varepsilon}(y)=c_{3,s}\left(\frac{\varepsilon}{\varepsilon^2+|y|^{2-s}}\right)^{\frac{3-2}{2-s}}
$$
and $u_{\varepsilon}:=  \varphi^2  U_{\varepsilon}$. Then we calculate
\begin{align*}
\int_{\mathbb{R}^3} K|\nabla u_{\varepsilon}|^2 dy& =\int_{\mathbb{R}^3} \varphi^{-2}\left|\nabla\left(\varphi^2 U_{\varepsilon}\right)\right|^2 dy\\
& =\int_{\mathbb{R}^3}|2 U_{\varepsilon} \nabla \varphi+\varphi \nabla U_{\varepsilon}|^2 dy\\
& =\int_{\mathbb{R}^3} 4 U_{\varepsilon}^2|\nabla \varphi|^2+2 \varphi U_{\varepsilon}(\nabla \varphi \cdot \nabla U_{\varepsilon})+\operatorname{div}\left(\varphi^2 U_{\varepsilon} \nabla U_{\varepsilon}\right)-\varphi^2 U_{\varepsilon} \Delta U_{\varepsilon} dy\\
& =\int_{\mathbb{R}^3} 4 U_{\varepsilon}^2|\nabla \varphi|^2+\frac{1}{2}\left(\nabla \varphi^2 \cdot \nabla U_{\varepsilon}^2\right)+\varphi^2 U_{\varepsilon}^{2^*(s)}\frac{1}{|y|^s}+\operatorname{div}\left(\varphi^2 U_{\varepsilon} \nabla U_{\varepsilon}\right) dy\\
& =\int_{\mathbb{R}^3} 4 U_{\varepsilon}^2|\nabla \varphi|^2-\frac{1}{2} U_{\varepsilon}^2 \Delta \varphi^2+\varphi^2 U_{\varepsilon}^{2^*(s)}\frac{1}{|y|^s}
+\operatorname{div}\left(\varphi^2 U_{\varepsilon} \nabla U_{\varepsilon}+\frac{1}{2} U_{\varepsilon}^2 \nabla \varphi^2\right)dy.
\end{align*}
And  by Gauss divergence theorem, we get
$$\int_{\mathbb{R}^3}\operatorname{div}\left(\varphi^2 U_{\varepsilon} \nabla U_{\varepsilon}+\frac{1}{2} U_{\varepsilon}^2 \nabla \varphi^2\right)dy=0$$
and
$$
\int_{\mathbb{R}^3} K\left|\nabla u_{\varepsilon}\right|^2 d y
=\int_{\mathbb{R}^3} \left(4 U_{\varepsilon}^2|\nabla \varphi|^2-\frac{1}{2} U_{\varepsilon}^2 \Delta \varphi^2+\varphi^2 U_{\varepsilon}^{2^*(s)} \frac{1}{|y|^s}\right)dy.
$$
Using the explicit form of $\varphi$, a direct calculation yields
$$
4|\nabla \varphi|^2-\frac{1}{2} \Delta \varphi^2=\frac{3}{4}  \varphi^2+\frac{1}{8} |y|^{2} \varphi^2.
$$
Thus,  we can obtain
$$
\int_{\mathbb{R}^3} 4 U_{\varepsilon}^2|\nabla \varphi|^2-\frac{1}{2} U_{\varepsilon}^2 \Delta \varphi^2dy
=\int_{\mathbb{R}^3} U_{\varepsilon}^2\left(\frac{3}{4}  \varphi^2+\frac{1}{8} |y|^{2} \varphi^2\right)dy.
$$
We now prove  that
\begin{equation}\label{sk17}
 \int_{\mathbb{R}^3} U_{\varepsilon}^2\left(\frac{3}{4} \varphi^2+\frac{1}{8}|y|^2 \varphi^2\right) d y
=c_{3,s} \varepsilon^{\frac{2}{2-s}} \int_{\mathbb{R}^3} |y|^{-2} \left(\frac{3}{4}  \varphi^2+\frac{1}{8} |y|^{2} \varphi^2\right)dy+o\left(\varepsilon\right).
\end{equation}
Hence, one has
\begin{equation}\label{sk16}
  \int_{\mathbb{R}^3} K\left|\nabla u_{\varepsilon}\right|^2 d y=\int_{\mathbb{R}^3}\varphi^2 U_{\varepsilon}^{2^*(s)} \frac{1}{|y|^s} d y
+ c_{3,s}\varepsilon^{\frac{2}{2-s}} \int_{\mathbb{R}^3}|y|^{-2} \left(\frac{3}{4} \varphi^2+\frac{1}{8}|y|^2 \varphi^2\right) d y+o\left(\varepsilon\right).
\end{equation}
In fact,
using Young inequality
$$
\frac{a^p}{p}+\frac{b^q}{q} \geqslant a b, \quad \text { with } a=\left(p\varepsilon^2\right)^{1 / p},  b=\left(q |y|^{2-s}\right)^{1 / q}, q>\max\{1,2-s\},\frac{1}{p}+\frac{1}{q}=1,
$$
we have
$$
\left(\varepsilon^2+|y|^{2-s}\right)\geq \left(p\varepsilon\right)^{1 / p}\left(q |y|^{2-s}\right)^{1 / q}.
$$
And using the inequality  $$
(c+\varepsilon)^{\beta}-c^{\beta}=\int_c^{c+\varepsilon} \beta x^{\beta-1} d x<\int_c^{c+\varepsilon} \beta(c+\varepsilon)^{\beta-1} d x=\beta \varepsilon(c+\varepsilon)^{\beta-1},(\beta>1,c>0).
$$
we get
\begin{align*}
\frac{\left(\varepsilon^2+|y|^{2-s}\right)^{\frac{2}{2-s}}-|y|^2}{|y|^2\left(\varepsilon^2+|y|^{2-s}\right)^{\frac{2}{2-s}}}
&\leq \frac{\left(\varepsilon^2+|y|^{2-s}\right)^{\frac{2}{2-s}}-\left(|y|^{2-s}\right)^{\frac{2}{2-s}}}{|y|^2\left(\varepsilon^2+|y|^{2-s}\right)^{\frac{2}{2-s}}}\\
&\leq \frac{\frac{2\varepsilon^2}{2-s}\left(\varepsilon^2+|y|^{2-s}\right)^{\frac{s}{2-s}}}{|y|^2\left(\varepsilon^2+|y|^{2-s}\right)^{\frac{2}{2-s}}}\\
&\leq  \frac{\frac{2\varepsilon^2}{2-s}}{|y|^2(\varepsilon^2+|y|^{2-s})}\\
&\leq  \frac{\frac{2\varepsilon^2}{2-s}}{|y|^2(\left(p\varepsilon\right)^{1 / p}\left(q |y|^{2-s}\right)^{1 / q})}.
\end{align*}
According to the above inequality, we infer
\begin{align}\label{sk18}
&\left|\int_{\mathbb{R}^3}|y|^{-2}\left(\frac{3}{4} \varphi^2+\frac{1}{8}|y|^2 \varphi^2\right) d y
-\int_{\mathbb{R}^3}\left(\frac{1}{\varepsilon^2+|y|^{2-s}}\right)^{\frac{2}{2-s}}\left(\frac{N}{4} \varphi^2+\frac{1}{8}|y|^2 \varphi^2\right) d y\right|\nonumber\\
&=\int_{\mathbb{R}^3}\frac{\left(\varepsilon^2+|y|^{2-s}\right)^{\frac{2}{2-s}}-|y|^2}{|y|^2\left(\varepsilon^2+|y|^{2-s}\right)^{\frac{2}{2-s}}}\left(\frac{3}{4} \varphi^2+\frac{1}{8}|y|^2 \varphi^2\right) d y\nonumber\\
&\leq \int_{\mathbb{R}^3}\frac{\frac{2\varepsilon^2}{2-s}}{|y|^2(\left(p\varepsilon\right)^{1 / p}\left(q |y|^{2-s}\right)^{1 / q})}\left(\frac{3}{4} \varphi^2+\frac{1}{8}|y|^2 \varphi^2\right) d y.
\end{align}
From $q>\max\{1,2-s\}$, we find that the  integral on the right-hand side of \eqref{sk18} is
convergent. So, \eqref{sk18} and the definition of $U_{\varepsilon}(y)$ imply  that (\ref{sk17}) holds.

By Gauss divergence theorem, we have
$$0=\int_{\mathbb{R}^3}
\operatorname{div}\left(\frac{y}{|y|^{2}} e^{-\frac{1}{4}|y|^2}\right)dy
=\int_{\mathbb{R}^3} e^{-\frac{|y|^2}{4}}\left(\frac{1}{|y|^{2}}-\frac{1}{2}\right)dy, $$
from which we may infer that
$$\int_{\mathbb{R}^3} \frac{\varphi^2}{|y|^{2}}dy=\int_{\mathbb{R}^3} \frac{\varphi^2}{2}dy.$$
Consequently, (\ref{sk16}) turns into
\begin{equation}\label{skkk16}
  \int_{\mathbb{R}^3} K\left|\nabla u_{\varepsilon}\right|^2 d y=\int_{\mathbb{R}^3}\varphi^2 U_{\varepsilon}^{2^*(s)} \frac{1}{|y|^s} d y
+ c_{3,s}\varepsilon^{\frac{2}{2-s}} \int_{\mathbb{R}^3}|y|^{-2} \varphi^2 d y+o\left(\varepsilon\right).
\end{equation}
The same estimate used for \eqref{sk17} gives
\begin{align}\label{sk19}
\int_{\mathbb{R}^3} K\left| u_{\varepsilon}\right|^2 d y&= \int_{\mathbb{R}^3} K \varphi^4 U^2_{\varepsilon}dy= \int_{\mathbb{R}^3}  \varphi^2 U^2_{\varepsilon}dy\nonumber\\
&= c_{3,s}\varepsilon^{\frac{2}{2-s}}\int_{\mathbb{R}^3} \varphi^2 \left(\frac{1}{\varepsilon^2+|y|^{2-s}}\right)^{\frac{2}{2-s}}dy\nonumber\\
&= c_{3,s}\varepsilon^{\frac{2}{2-s}}\int_{\mathbb{R}^3} \varphi^2 |y|^{-2}dy+o(\varepsilon).
\end{align}
Then \eqref{skkk16} and \eqref{sk19} imply that
$$
\int_{\mathbb{R}^3} K(y)\left|\nabla u_{\varepsilon}\right|^2 d y-\alpha\int_{\mathbb{R}^3} K\left| u_{\varepsilon}\right|^2 d y
=\int_{\mathbb{R}^3} \varphi^2 U_{\varepsilon}^{2^*(s)} \frac{1}{|y|^s} d y+(1-\alpha)c_{3,s}\varepsilon^{\frac{2}{2-s}}\int_{\mathbb{R}^3} \varphi^2 |y|^{-2}dy+o(\varepsilon).
$$
It is easy to see that
$$
\int_{\mathbb{R}^3}  u_{\varepsilon}^{2^*(s)} K(y)\frac{1}{|y|^s}dy =\int_{\mathbb{R}^3} \varphi^{12-4s} K(y)U_{\varepsilon}^{2^*(s)}\frac{1}{|y|^s}dy.
$$
We prove that the following estimate holds
\begin{equation}\label{sk21}
 \int_{\mathbb{R}^3} \varphi^{12-4 s} U_{\varepsilon}^{2^*(s)} \frac{1}{|y|^s} K(y)d y
=\int_{\mathbb{R}^3}  U_{\varepsilon}^{2^*(s)} \frac{1}{|y|^s} d y
+\int_{\mathbb{R}^3}\left(\varphi^{12-4 s}-1\right) (c_{3,s})^{2^*(s)} \left(\frac{\varepsilon}{\varepsilon^2+|y|^{2-s}}\right)^{\frac{6-s}{2-s}}\frac{1}{|y|^s}dy.
\end{equation}
In fact, we choose $p,q>1$ and $$\frac{6-2s}{5-s}<q<\frac{6-2s}{3-s},\frac{1}{p}+\frac{1}{q}=1,$$
then using Young inequality  again,
$$
\frac{a^p}{p}+\frac{b^q}{q} \geqslant a b, \quad \text { with } a=\left(p\varepsilon^2\right)^{1 / p},  b=\left(q |y|^{2-s}\right)^{1 / q},
$$
we obtain
\begin{align}\label{sk20}
&\int_{\mathbb{R}^3}\left(\varphi^{12-4 s}-1\right) (c_{3,s})^{2^*(s)} \left(\frac{\varepsilon}{\varepsilon^2+|y|^{2-s}}\right)^{\frac{6-2s}{2-s}}\frac{1}{|y|^s}dy\nonumber\\
&\leq\int_{\mathbb{R}^3}\left(\varphi^{12-4 s}-1\right) (c_{3,s})^{2^*(s)} \left(\frac{\varepsilon}{(p \varepsilon)^{1 / p}\left(q|y|^{2-s}\right)^{1 / q}}\right)^{\frac{6-2s}{2-s}}\frac{1}{|y|^s}dy.
\end{align}
It is easy to see that when $|y|$ is sufficiently small, $1-\varphi^r(r>1)$ is equivalent to $|y|^2$.  And
$$
1<\frac{6-2s}{5-s}<q<\frac{6-2s}{3-s},
$$
we can deduce that $$\frac{6-2s}{q}+s>3,\ \frac{6-2s}{q}+s-2<3,$$
which implies that the right-hand side of (\ref{sk20}) is integrable. So, \eqref{sk21} holds.

Then, the definition of $Q_{K,\alpha}$ gives

$$Q_{K,\alpha}=\frac{\int_{\mathbb{R}^3} \varphi^2U_{\varepsilon}^{2^*(s)} \frac{1}{|y|^s}  d y+(1-\alpha) c_{3, s} \varepsilon^{\frac{2}{2-s}} \int_{\mathbb{R}^3} \varphi^2|y|^{-2} d y+o(\varepsilon)}{\int_{\mathbb{R}^3} U_{\varepsilon}^{2^*(s)} \frac{1}{|y|^s} K(y) d y}<S_0.$$
\end{proof}

\begin{proof}[Proof of Theorem \ref{t42}]
It follows immediately from Lemma \ref{l41},Lemma \ref{l44},  Lemma \ref{l45} and Lemma \ref{l46} that Theorem \ref{t42} is valid.
\end{proof}

\section{Another proof of Theorem \ref{t42}}
In this section, we give another proof of Theorem \ref{t42}.
\begin{equation}\label{e51}
E_K(v)=\frac{1}{2} \int_{\mathbb{R}^N}|\nabla v|^2 K(y) d y-\frac{\alpha}{2} \int_{\mathbb{R}^N}|v|^2 K(y) d y
-\frac{1}{2^{\ast}(s)} \int_{\mathbb{R}^N} \frac{1}{|y|^s}|v|^{2^{\ast}(s)} K(y) d y.
\end{equation}

\begin{lemma}\label{l51}
 Assume that the functional $E_K(v)$ is defined as in (\ref{e51}),  $N\geq 3$. Then for every $\alpha\in \mathbb{R}$,
  every sequence $\left\{v_n\right\}$ in $X$ such that, as $n \rightarrow \infty$,

\begin{equation}\label{e58}
E_K\left(v_n\right) \rightarrow \beta<\frac{2-s}{2(N-s)} \left(S_{K,0}\right)^{\frac{N-s}{2-s}},
\quad \langle E^{\prime}_{K}(v_n),v_n\rangle \rightarrow 0,
\end{equation}
is relatively compact in $X$.
\end{lemma}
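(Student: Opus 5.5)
The plan is the standard Brezis--Nirenberg compactness argument below the first critical energy level, with $S_{K,0}$ playing the role of the Sobolev constant. We also use the Brezis--Lieb splitting of Lemma~\ref{b2} and the compact embedding of Lemma~\ref{l00}. Throughout I read the hypothesis \eqref{e58} as a Palais--Smale condition, i.e.\ $E_K(v_n)\to\beta$ and $E_K'(v_n)\to 0$ in $X^*$. This is needed to identify the weak limit as a critical point; the hypothesis $\langle E_K'(v_n),v_n\rangle\to 0$ alone is not enough. Set $A$ and $B$ as in \eqref{ef}, with $q=2^*(s)$.

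\textbf{Step 1: boundedness.} From
$E_K(v_n)-\frac12\langle E_K'(v_n),v_n\rangle=\bigl(\frac12-\frac1{2^*(s)}\bigr)B(v_n)$
we get $B(v_n)\le C+o(1)\|\nabla v_n\|_{K,2}$. Hence $A(v_n)=2E_K(v_n)+\frac{2}{2^*(s)}B(v_n)\le C+o(1)\|\nabla v_n\|_{K,2}$.

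To control the $L^2_K$ part of $A$ for arbitrary $\alpha$, argue by contradiction.
\begin{itemize}
\item Suppose $t_n:=\|\nabla v_n\|_{K,2}\to\infty$ and set $w_n=v_n/t_n$. Then $B(w_n)=t_n^{-2^*(s)}B(v_n)\to0$.
\item Also $\|\nabla w_n\|_{K,2}^2-\alpha\|w_n\|_{K,2}^2\to0$.
\item By Lemma~\ref{l00}, along a subsequence $w_n\rightharpoonup w$ in $X$ and $w_n\to w$ in $L^2_K$. Hence $\alpha\|w\|_{K,2}^2=1$, so $w\ne0$.
\item Divide $E_K'(v_n)\to0$ by $t_n$. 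Since $B(w_n)\to0$, the rescaled nonlinear term tested against a fixed $\phi$ is $t_n^{2^*(s)-2}\int |w_n|^{2^*(s)-2}w_n\phi\,|y|^{-s}K\,dy$.
\end{itemize}
Controlling this rescaled term is the delicate point. I would first try to test instead against $w_n$ and against $w_n-w$, to obtain $\|\nabla(w_n-w)\|_{K,2}\to0$. That would give $w_n\to w$ strongly with $\|\nabla w\|_{K,2}=1$ and $w$ an eigenfunction of $L$ with eigenvalue $\alpha$. A contradiction should then come from $B(w)=\lim B(w_n)=0$. If this fails, I would fall back on splitting $X$ into the finite-dimensional span of the eigenfunctions of $L$ with eigenvalues $\le\alpha$ and its orthogonal complement, on which $A$ is coercive by the spectral decomposition quoted after \eqref{po}. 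I expect this step to be the main obstacle.

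\textbf{Step 2: the limit is a critical point.} With $\{v_n\}$ bounded, extract a subsequence such that
\begin{itemize}
\item $v_n\rightharpoonup v$ in $X$;
\item $v_n\to v$ in $L^2_K(\mathbb{R}^N)$ and a.e.;
\item $|v_n|^{2^*(s)-2}v_n\rightharpoonup |v|^{2^*(s)-2}v$ weakly in $L^{2^*(s)/(2^*(s)-1)}_K(\mathbb{R}^N,|y|^{-s})$, by boundedness from Lemma~\ref{l02} and a.e.\ convergence.
\end{itemize}
Passing to the limit gives $E_K'(v)=0$. Hence $A(v)=B(v)$ and $E_K(v)=\bigl(\frac12-\frac1{2^*(s)}\bigr)B(v)=\frac{2-s}{2(N-s)}B(v)\ge0$.

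\textbf{Step 3: analysis of the remainder.} Put $w_n=v_n-v$. Weak convergence in $X$ gives
$\|\nabla v_n\|_{K,2}^2=\|\nabla v\|_{K,2}^2+\|\nabla w_n\|_{K,2}^2+o(1)$.
Strong $L^2_K$ convergence kills the $\alpha$-term, and Lemma~\ref{b2} gives $B(v_n)=B(v)+B(w_n)+o(1)$. Subtracting the identities for $v$ yields
\[
E_K(v_n)=E_K(v)+\tfrac12\|\nabla w_n\|_{K,2}^2-\tfrac1{2^*(s)}B(w_n)+o(1),
\qquad
\|\nabla w_n\|_{K,2}^2-B(w_n)=o(1).
\]
Pass to a subsequence with $\|\nabla w_n\|_{K,2}^2\to l$ and $B(w_n)\to l$. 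The definition of $S_{K,0}$ gives $\|\nabla w_n\|_{K,2}^2\ge S_{K,0}B(w_n)^{2/2^*(s)}$, hence $l\ge S_{K,0}\,l^{2/2^*(s)}$. So either $l=0$ or $l\ge S_{K,0}^{\frac{N-s}{2-s}}$.

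In the second case,
\[
\beta=E_K(v)+\Bigl(\tfrac12-\tfrac1{2^*(s)}\Bigr)l\ge\tfrac{2-s}{2(N-s)}S_{K,0}^{\frac{N-s}{2-s}},
\]
which contradicts \eqref{e58}. Therefore $l=0$, i.e.\ $v_n\to v$ in $X$, which is the claimed relative compactness.
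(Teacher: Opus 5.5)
Your proof is correct. Steps 2--3 are the same Brezis--Nirenberg argument as the paper's. The paper also identifies the weak limit as a critical point by tacitly using $E_K'(v_n)\to0$ in $X^*$, so your explicit reading of \eqref{e58} as a Palais--Smale condition is exactly what its proof needs. It closes with the same Brezis--Lieb splitting, phrased as a uniform bound $B(v_n-v)\le c\,\|\nabla(v_n-v)\|_{K,2}^2$ with $c<1$ rather than your cleaner dichotomy $l=0$ or $l\ge S_{K,0}^{\frac{N-s}{2-s}}$.

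The genuine difference is Step 1. The paper gets boundedness in one line from the Poincar\'e inequality \eqref{po}, via $(1-\alpha/\lambda_1)\|\nabla v_n\|_{K,2}^2\le A(v_n)\le C+o(1)\|\nabla v_n\|_{K,2}$. This needs $\alpha<\lambda_1=N/2$, which covers the range used in Theorem~\ref{t42} but not the ``every $\alpha\in\mathbb{R}$'' of the statement. Your rescaling argument has no such restriction.

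The point you flag as the main obstacle is not one: you need neither to test $E_K'(v_n)$ nor the spectral splitting. From $w_n\to w$ in $L^2_K$ you already have $\alpha\|w\|_{K,2}^2=1$. This is impossible if $\alpha\le0$ and gives $w\neq0$ if $\alpha>0$. Passing to an a.e.\ convergent subsequence, Fatou gives $B(w)\le\liminf B(w_n)=0$, so $w=0$, a contradiction. For $A(w_n)\to0$, also record the lower bound $A(v_n)\ge 2E_K(v_n)\ge -C$.

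So your route costs a few lines more than the paper's, but it proves the lemma for all $\alpha$ as stated.
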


\begin{proof}
 It is easy to check that $\left\{v_n\right\}$ is bounded in $X$. In fact,

$$
\begin{aligned}
&\frac{2-s}{2(N-s)} \left(S_{K,0}\right)^{\frac{N-s}{2-s}}+o(1)\left(1+\left\|\nabla v_n\right\|_{K,2}\right)
 \geq E_K\left(v_n\right)-\frac{1}{2}\left\langle E_K^{\prime}(v_n), v_n\right\rangle \\
& \quad=\left(\frac{1}{2}-\frac{1}{2^{\ast}(s)}\right) \int_{\mathbb{R}^N} \frac{1}{|y|^s}|v_n|^{2^{\ast}(s)} K(y) d y.
\end{aligned}
$$
So by Poincar\'{e} inequality, we have

$$
\begin{aligned}
\left(1-\frac{\alpha}{\lambda_1}\right) \int_{\mathbb{R}^N}|\nabla v_n|^2 K(y) d y &\leq \int_{\mathbb{R}^N}|\nabla v_n|^2 K(y) d y
-\alpha \int_{\mathbb{R}^N}|v_n|^2 K(y) d y\\
&=2 E_K\left(v_n\right)+\frac{2}{2^*(s)} \int_{\mathbb{R}^N} \frac{1}{|y|^s}|v|^{2^*(s)} K(y) d y. \\
& \leq C_2+o(1)\left\|\nabla v_n\right\|_{K,2},
\end{aligned}
$$
where $o(1) \rightarrow 0$ as $n \rightarrow \infty$. Thus it follows that $\left\{v_n\right\}$ is bounded in $X$.
So we can extract a subsequence, still denoted by $\left\{v_n\right\}$, such that, as $n \rightarrow \infty$,
\begin{equation}\label{e53}
\begin{array}{ll}
v_n \rightharpoonup v & \text { weakly in } X, \\
v_n \rightarrow v & \text { in } L_K^2\left(\mathbb{R}^N\right), \\
v_n \rightarrow v & \text { in } L_K^q\left(\mathbb{R}^N,\frac{1}{|y|^s}\right), 2 \leq q<2^{\ast}(s),\\
v_n \rightarrow v & \text { a.e. on } \mathbb{R}^N.
\end{array}
\end{equation}
  In particular, for every $\varphi \in X$, we obtain that, as $n \rightarrow \infty$,
 \begin{equation}\label{e52}
 \begin{aligned}
\left\langle E_K^{\prime}\left(v_n\right), \varphi\right\rangle&=   \int_{ \mathbb{R}^N }\nabla v_n\nabla \varphi
 K(y)d y- \alpha\int_{ \mathbb{R}^N }v_n\varphi K(y)d y- \int_{\mathbb{R}^N }\frac{1}{|y|^s}|v_n|^{2^{\ast}(s)-1}\varphi K(y)dy\\
& \rightarrow \int_{ \mathbb{R}^N }\nabla v\nabla \varphi
 K(y)d y- \alpha\int_{ \mathbb{R}^N }v\varphi K(y)d y- \int_{\mathbb{R}^N }\frac{1}{|y|^s}|v|^{2^{\ast}(s)-1}\varphi K(y)dy\\
 &=\left\langle E_K^{\prime}(v), \varphi\right\rangle.
\end{aligned}
 \end{equation}

Since by hypothesis $E_K^{\prime}\left(v_n\right) \rightarrow 0$,
 we deduce $\left\langle E_K^{\prime}(v), \varphi\right\rangle=0$.
 Thus, $v \in X$ solves the  problem (\ref{e02}).

 Next we show that $v_n \rightarrow v$ strongly in $X$.

 We have by choosing $\varphi=v$ in \eqref{e52},
\begin{equation}
\left\langle E_K^{\prime}(v), v\right\rangle
=\int_{\mathbb{R}^N} |\nabla v|^2  K(y) d y-\alpha \int_{\mathbb{R}^N} v^2 K(y) d y-\int_{\mathbb{R}^N} \frac{1}{|y|^s}|v|^{2^*(s)} K(y) d y=0.
\end{equation}
Hence, we see
\begin{equation}\label{e55}
 E_K(v)=\left(\frac{1}{2}-\frac{1}{2^*(s)}\right)
 \int_{\mathbb{R}^N} \frac{1}{|y|^s}|v|^{2^*(s)} K(y) d y=\frac{2-s}{2N-2s}
 \int_{\mathbb{R}^N} \frac{1}{|y|^s}|v|^{2^*(s)} K(y) d y\geq 0.
\end{equation}
Moreover, Lemma \ref{b2} leads to
\begin{equation*}
\int_{\mathbb{R}^N} \frac{\left|v_n-v\right|^{2^*(s)}}{|y|^s} K(y) d y=\int_{\mathbb{R}^N} \frac{\left|v_n\right|^{2^*(s)}}{|y|^s} K(y) d y-\int_{\mathbb{R}^N} \frac{|v|^{2^*(s)}}{|y|^s} K(y) d y+o(1)
\end{equation*}
Notice that, developing the square and by weak convergence,
\begin{equation*}
\int_{\mathbb{R}^N}\left|\nabla v_n\right|^2  d y=\int_{\mathbb{R}^N}\left|\nabla v_n-\nabla v\right|^2  d y+\int_{\mathbb{R}^N}|\nabla v|^2  d y+o(1).
\end{equation*}
Thus, according to $v_n \rightarrow v$ in $L_K^2\left(\mathbb{R}^N\right)$ and  the above two formulas,  we have
\begin{equation}\label{e54}
E_K\left(v_n\right)=E_K(v)+\frac{1}{2}\int_{\mathbb{R}^N}\left|\nabla v_n-\nabla v\right|^2 d y-\frac{1}{2^{\ast}(s)}\int_{\mathbb{R}^N} \frac{\left|v_n-v\right|^{2^*(s)}}{|y|^s} K(y) d y+o(1) .
\end{equation}
Furthermore, by \eqref{e53}, we obtain
\begin{align*}
& \int_{\mathbb{R}^N}\left(\left|v_n\right|^{2^*(s)-1} v_n-|v|^{2^*(s)-2} v\right)\left(v_n-v\right) \frac{1}{|y|^s}K(y)d y \\
& =\int_{\mathbb{R}^N}\left(\left|v_n\right|^{2^*(s)}-\left|v_n\right|^{2^*(s)-2} v_n v\right) \frac{1}{|y|^s}K(y)d y+o(1) \\
& =\int_{\mathbb{R}^N}\left(\left|v_n\right|^{2^*(s)}-|v|^{2^*(s)}\right) \frac{1}{|y|^s}K(y)d y+o(1)\\
&=\int_{\mathbb{R}^N}\left(\left|v_n-v\right|^{2^*(s)}\right) \frac{1}{|y|^s}K(y)d y+o(1),
\end{align*}
which gives
\begin{equation*}
\begin{aligned}
o(1) & =\left\langle E_K^{\prime}\left(v_n\right), v_n-v\right\rangle \\
& =\left\langle E_K^{\prime}\left(v_n\right)-E_K^{\prime}(v), v_n-v\right\rangle \\
& =\int_{\mathbb{R}^N}\left|\nabla\left(v_n-v\right)\right|^2 K(y)d y-\int_{\mathbb{R}^N}\left|v_n-v\right|^{2^{\ast}(s)} \frac{1}{|y|^s}K(y)d y+o(1).
\end{aligned}
\end{equation*}
Then according to \eqref{e54} we obtain
\begin{equation*}
E_K\left(v_n\right)-E_K(v)=\frac{1}{2}\int_{\mathbb{R}^N}\left|\nabla v_n-\nabla v\right|^2 d y-\frac{1}{2^{\ast}(s)}\int_{\mathbb{R}^N} \frac{\left|v_n-v\right|^{2^*(s)}}{|y|^s} K(y) d y\rightarrow 0.
\end{equation*}
On the other hand, by \eqref{e54} and since $E_K(v) \geq 0$ (by \eqref{e55}), we see that there is a large $n_0>0$ such that, for $n \geq n_0$,
\begin{equation*}
\begin{aligned}
&\frac{1}{2}\int_{\mathbb{R}^N}\left|\nabla v_n-\nabla v\right|^2 d y-\frac{1}{2^{\ast}(s)}
\int_{\mathbb{R}^N} \frac{\left|v_n-v\right|^{2^*(s)}}{|y|^s} K(y) d y\\
&=E_K\left(v_n\right)-E_K(v)+o(1)\\
&\leq E_K\left(v_n\right)+o(1)\leq \frac{2-s}{2(N-s)} \left(S_{K,0}\right)^{\frac{N-s}{2-s}}.
\end{aligned}
\end{equation*}
Therefore, we have the following inequality
$$
\left\|\nabla v_n-\nabla v\right\|_{K,2}^2<\frac{2-s}{N-s} \left(S_{K,0}\right)^{\frac{N-s}{2-s}}
\leq \left(\frac{\int_{\mathbb{R}^N} K(y)|\nabla v_n-v|^2 d y}{\left(\int_{\mathbb{R}^N} K(y)|v_n-v|^{2 *(s)} \frac{1}{|y|^s} d y\right)^{2 / 2^*(s)}}\right)^{\frac{N-s}{2-s}},
$$
which gives
$$
\frac{\int_{\mathbb{R}^N} K(y)|v_n-v|^{2^*(s)} \frac{1}{|y|^s} d y}{\int_{\mathbb{R}^N} K(y)|\nabla (v_n-v)|^2 d y} \leq c<1
$$
for all $n \geq n_0$. Then we obtain that, as $n \rightarrow \infty$,
\begin{equation}
\begin{aligned}
\left(1-c\right)\left\|\nabla v_n- \nabla v\right\|^2_{K} &
\leq\left\|\nabla v_n- \nabla v\right\|_{K}^2\left(1-\frac{\int_{\mathbb{R}^N}\left|v_n-v\right|^{2^{\ast}(s)}\frac{1}{|y|^s}K(y) dy}{\int_{\mathbb{R}^N}\left|\nabla\left(v_n-v\right)\right|^2 K(y)d y}\right) \\
& \leq \int_{\mathbb{R}^N}\left|\nabla\left(v_n-v\right)\right|^2 K(y)d y-\int_{\mathbb{R}^N}\left|v_n-v\right|^{2^{\ast}(s)} \frac{1}{|y|^s}d y=o(1),
\end{aligned}
\end{equation}
establishing that $v_n \rightarrow v$ strongly in $X$.
\end{proof}
Before giving the second proof of Theorem \ref{t42}, let us recall the Mountain Pass Theorem
which can be  consulted in \cite{cr4}.
\begin{lemma}\label{l52}
 Let $E$ be a real Banach space with its dual space $E^*$ and suppose that $I \in C^1(E, R)$ satisfies the condition
$$
\max \left\{I(0), I\left(u_1\right)\right\} \leq \alpha<\beta \leq \inf _{\left\|u_1\right\|=\rho} I(u),
$$
for some $\beta>\alpha, \rho>0$ and $u_1 \in E$ with $\left\|u_1\right\|>\rho$. Let $c \geq \beta$ be characterized by
$$
c=\inf _{\gamma \in \Gamma} \max _{0 \leq \tau \leq 1} I(\gamma(\tau)),
$$
where $\Gamma=\left\{\gamma \in C([0,1], E) \mid \gamma(0)=0, \gamma(1)=u_1\right\}$ is the set of continuous paths joining 0 and $u_1$. Then, there exists a sequence $\left\{u_n\right\} \subset E$ such that, as $n \rightarrow \infty$,
$$
I\left(u_n\right) \rightarrow c \geq \beta \text { and }\left.I^{\prime}\left(u_n\right)\right|_{E^* \rightarrow 0} .
$$
\end{lemma}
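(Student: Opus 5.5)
This is the classical Mountain Pass Theorem (Ambrosetti--Rabinowitz, in the form with Palais--Smale sequences and no compactness assumed). I would prove it by contradiction, using a quantitative deformation lemma.

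\textbf{Step 1: the level is at least $\beta$.} Every path $\gamma\in\Gamma$ joins $0$ to $u_1$, and $\|0\|=0<\rho<\|u_1\|$. By continuity of $\tau\mapsto\|\gamma(\tau)\|$ and the intermediate value theorem, each such path crosses the sphere $\{\|u\|=\rho\}$. Hence $\max_{[0,1]} I\circ\gamma\ge\beta$ for every $\gamma$, and so $c\ge\beta$. The straight segment $\tau\mapsto\tau u_1$ belongs to $\Gamma$, which gives $c<\infty$.

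\textbf{Step 2: the contradiction hypothesis.} Suppose no sequence as claimed exists. Then there are $\delta>0$ and $\varepsilon_0>0$ such that $\|I'(u)\|_{E^*}\ge\delta$ whenever $|I(u)-c|\le 2\varepsilon_0$. Shrink $\varepsilon_0$ so that $2\varepsilon_0<\beta-\alpha$. Since $c\ge\beta$, this guarantees that the endpoints $0$ and $u_1$, where $I\le\alpha$, lie outside the strip $\{|I-c|\le 2\varepsilon_0\}$.

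\textbf{Step 3: the deformation.} This is the main technical step. Because $I$ is only $C^1$ on a Banach space, $I'$ cannot be used directly as a flow field. I would instead use a locally Lipschitz pseudo-gradient vector field $V$ on $\tilde E=\{u: I'(u)\neq0\}$, satisfying
\[
\|V(u)\|\le 2\|I'(u)\|,\qquad \langle I'(u),V(u)\rangle\ge\|I'(u)\|^2 .
\]
Such a field exists by a partition of unity argument, using paracompactness of metric spaces.

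Next take a locally Lipschitz cutoff $g$ with $g=1$ on $\{|I-c|\le\varepsilon\}$ and $g=0$ outside $\{|I-c|\le2\varepsilon\}$, for some $\varepsilon<\varepsilon_0$. Solve the flow
\[
\frac{d}{dt}\eta=-g(\eta)\,\frac{V(\eta)}{\|V(\eta)\|}.
\]
The right-hand side is bounded and locally Lipschitz, so the flow exists for all time. Along it,
\[
\frac{d}{dt}I(\eta)\le -g\,\frac{\|I'\|^2}{\|V\|}\le -\frac{g\,\delta}{2}.
\]
Choose $\varepsilon<\min\{\varepsilon_0,\ \delta/4\}$ and set $\eta_1=\eta(1,\cdot)$. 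Then $\eta_1$ maps $\{I\le c+\varepsilon\}$ into $\{I\le c-\varepsilon\}$. Since $g=0$ near $0$ and near $u_1$ by Step 2, $\eta_1$ fixes both points.

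\textbf{Step 4: conclusion.} Pick $\gamma\in\Gamma$ with $\max I\circ\gamma\le c+\varepsilon$. The composition $\eta_1\circ\gamma$ still belongs to $\Gamma$, because the endpoints are fixed. But $\max I\circ(\eta_1\circ\gamma)\le c-\varepsilon$, which contradicts the definition of $c$.

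Therefore, for every $n$ there is $u_n$ with $|I(u_n)-c|\le 1/n$ and $\|I'(u_n)\|_{E^*}\le 1/n$. This is the required Palais--Smale sequence at the level $c\ge\beta$.

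Alternatively, one can shorten Steps 2--4 by applying Ekeland's variational principle on the complete metric space $\Gamma$ with the sup metric. However, the deformation argument above is the most transparent route.
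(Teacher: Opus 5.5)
Your argument is correct. It is the standard proof of the mountain pass theorem: a pseudo-gradient field, a cutoff near the level $c$, and a quantitative deformation that fixes $0$ and $u_1$. This is essentially how the result is proved in the minimax reference the paper cites, since the paper itself only quotes the lemma without proof. The one step you leave implicit, $\eta_1(\{I\le c+\varepsilon\})\subset\{I\le c-\varepsilon\}$, follows from a simple dichotomy. Either the trajectory reaches $\{I\le c-\varepsilon\}$ before time $1$. Or $g\equiv 1$ along it on $[0,1]$, and then $I$ drops by at least $\delta/2>2\varepsilon$.
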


Let
$$
\Sigma=\left\{v \in X \backslash\{0\} \mid\left\langle E_K^{\prime}(v), v\right\rangle=0\right\} .
$$
We define critical values for the functionals as follows:
$$
\begin{aligned}
c^* & =\inf _{v \in \Sigma} E_K(v), \\
c & =\inf _{\gamma \in \Gamma} \sup _{t \in[0,1]} E_K(\gamma(t)), \\
c^{* *} & =\inf _{v \in X) \backslash\{0\}} \sup _{t \geq 0} E_K(t v),
\end{aligned}
$$
where $\Gamma:=\left\{\gamma \in C\left([0,1], X\right) \mid \gamma(0)=0, E_K(\gamma(1))<0\right\}$.
We have the following relations, whose proofs are standard (see \cite[Theorem 4.2]{cr4}).

\begin{lemma}\label{l53}
$c=c^*=c^{* *}$
\end{lemma}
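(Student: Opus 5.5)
The plan is to work with the fibering map $t\mapsto E_K(tv)$ for fixed $v\in X\setminus\{0\}$. Throughout we assume $\alpha<\lambda_1=N/2$, which is the range of Theorem \ref{t42}. Then the Poincar\'e inequality \eqref{po} gives
\[
A(v)\ge (1-\alpha/\lambda_1)\|\nabla v\|_{K,2}^2>0 .
\]
Write $B(v)=\int_{\mathbb{R}^N}|v|^{2^*(s)}|y|^{-s}K\,dy>0$. Then
\[
g(t):=E_K(tv)=\tfrac{t^2}{2}A(v)-\tfrac{t^{2^*(s)}}{2^*(s)}B(v),
\]
and $g$ has a unique critical point on $(0,\infty)$, namely
\[
t_v=\bigl(A(v)/B(v)\bigr)^{1/(2^*(s)-2)} .
\]
This point is a strict global maximum on $[0,\infty)$, and $t_vv\in\Sigma$. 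Conversely, $w\in\Sigma$ holds exactly when $t_w=1$. From this I get
\[
\sup_{t\ge0}E_K(tv)=E_K(t_vv), \qquad c^{**}=\inf_{w\in\Sigma}E_K(w)=c^*,
\]
and also the explicit value $c^*=\frac{2-s}{2(N-s)}S_{K,\alpha}^{(N-s)/(2-s)}$, which is worth recording for later use.

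Next I show $c\le c^{**}$. Fix $v\ne0$. Since $g(t)\to-\infty$, I can choose $T$ with $E_K(Tv)<0$. The path $\gamma(\tau)=\tau Tv$ then belongs to $\Gamma$, and $\max_\tau E_K(\gamma(\tau))=\sup_{t\ge0}E_K(tv)$. Taking the infimum over $v$ gives $c\le c^{**}$.

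Finally I show $c\ge c^*$, using the intersection argument. Take any $\gamma\in\Gamma$ and set $J(w)=\langle E_K'(w),w\rangle=A(w)-B(w)$. By Lemma \ref{l02}(1) and the Poincar\'e bound,
\[
J(w)\ge C_1\|\nabla w\|_{K,2}^2-C_2\|\nabla w\|_{K,2}^{2^*(s)} ,
\]
so $J>0$ on a small punctured ball $0<\|\nabla w\|_{K,2}<r$. At the other endpoint, $E_K(\gamma(1))<0$ gives
\[
\tfrac12A(\gamma(1))<\tfrac1{2^*(s)}B(\gamma(1))<B(\gamma(1)),
\]
so $J(\gamma(1))<0$. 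Because $\gamma(0)=0$ and $\gamma$ is continuous, $\gamma(\tau)$ lies in the small ball for small $\tau>0$. If $\gamma(\tau)\ne0$ there, then $J(\gamma(\tau))>0$. There is also the degenerate possibility that $\gamma$ stays at $0$ on an initial interval. To handle it, I let $\tau_0=\sup\{\tau:\gamma(\tau)=0\}$. Then $\tau_0<1$, and just after $\tau_0$ the path is nonzero and small, so $J>0$ there. By the intermediate value theorem there is $\tau_1\in(\tau_0,1)$ with $J(\gamma(\tau_1))=0$ and $\gamma(\tau_1)\ne0$, i.e.\ $\gamma(\tau_1)\in\Sigma$. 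Hence $\max_\tau E_K(\gamma(\tau))\ge c^*$, and so $c\ge c^*$.

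Combining, $c^*\le c\le c^{**}=c^*$.

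The main obstacle is the positivity of $A$. Everything above uses $\alpha<\lambda_1$. If $\alpha\ge N/2$, then $c^{**}$ may be $+\infty$ for some directions, or $\Sigma$ may contain elements of nonpositive energy. So I would state explicitly that the lemma is used in the regime $\alpha<N/2$ of Theorem \ref{t42}. The remaining point to check carefully is the handling of the endpoint $\tau_0$ in the intersection argument.
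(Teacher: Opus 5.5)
Your proof is correct. It is exactly the standard fibering/Nehari-manifold argument of the cited Theorem 4.2 in Willem's minimax book, which the paper only invokes (``whose proofs are standard''). Your restriction $\alpha<N/2$, which guarantees $A>0$ on $X\setminus\{0\}$, is the regime in which the lemma is used for Theorem \ref{t42}.

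Only cosmetic slips remain:
\begin{itemize}
\item From $E_K(\gamma(1))<0$ one gets $A(\gamma(1))<\frac{2}{2^*(s)}B(\gamma(1))<B(\gamma(1))$; your chain has $\frac12A$ on the left.
\item For $\alpha<0$ the coercivity constant should be $\min\{1,1-\alpha/\lambda_1\}$.
\item Your closing remark is off. $\sup_{t\ge0}E_K(tv)$ is always finite, and $E_K=(\frac12-\frac1{2^*(s)})B>0$ on $\Sigma$. What actually happens for $\alpha\ge N/2$ is that rays with $A(v)\le0$ never meet $\Sigma$ and force $c=c^{**}=0$.
\end{itemize}
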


\begin{proof}[Proof of Theorem \ref{t42}]
It is sufficient to prove that $E_K(v)$ satisfies the condition (\ref{e58}).
Considering the functional $E_K(v)$, for every $v\in X$ and $t \geq 0$, we have
$$
E_K(t v)=\frac{A t^2}{2}-\frac{B t^{2^{\ast}(s)}}{2^{\ast}(s)},
$$
where
$$
A=\int_{\mathbb{R}^N}\left(|\nabla v|^2-\alpha|v|^2\right) K(y) d y
$$
and
$$
B=\int_{\mathbb{R}^N} \frac{1}{|y|^s}|v|^{2^*(s)} K(y) d y.
$$
We see that $E_K(t v)$ has its maximum at $t_0=\left(\frac{A}{B}\right)^{1 /\left(2^{\ast}(s)-2\right)}=\left(\frac{A}{B}\right)^{(N-2) /(4-2s)}$. Hence, we obtain

$$
\sup _{t \geq 0} E_K(t v)=\max _{t \geq 0} E_K(t v)=\left.E_K(t v)\right|_{t=t_0}=\frac{2-s}{2(N-s)} A\left(\frac{A}{B}\right)^{\frac{N-2}{2-s}}.
$$
This implies
$$
\begin{aligned}
\inf _{0 \neq v \in X} \sup _{t \geq 0} E_K(t v) & \leq \frac{2-s}{2(N-s)} \left(\inf _{0 \neq v \in X} Q_{K,\alpha}(v)\right)^{\frac{N-s}{2-s}} \\
& <\frac{2-s}{2(N-s)} \left(S_{K,0}\right)^{\frac{N-s}{2-s}}<\frac{2-s}{2(N-s)} \left(S_{0}\right)^{\frac{N-s}{2-s}}.
\end{aligned}
$$
by Lemma \ref{l44},  Lemma \ref{l45} and Lemma \ref{l46}. Then by using Lemma \ref{l51},  Lemma \ref{l52} and  Lemma \ref{l53}, we obtain
$$
c^{* *}=\inf _{0 \neq v \in H_{0, L}^1(\mathcal{C})} \sup _{t \geq 0} I_1(t v)
$$
is a critical value of $E_K$. %Finally we complete the proof of regularity of the solution by XXX
\end{proof}

\end{document}